\numberwithin{equation}{section}
\newtheorem{theorem}{Theorem}[section]
\newtheorem{lemma}[theorem]{Lemma}
\newtheorem{corollary}[theorem]{Corollary}
\newtheorem{definition}[theorem]{Definition}
\newtheorem{proposition}[theorem]{Proposition}
\newtheorem{remark}[theorem]{Remark}
\newcommand\beq{\begin{equation}}
\newcommand\ds{\displaystyle}
\newcommand\eeq{\end{equation}}
\newcommand\im{\mathrm {Im~}}
\newcommand\ii{\mathrm i}
\newcommand\al{\alpha}
\newcommand\la{\lambda}
\newcommand\D{\mathbb D}
\newcommand\T{\mathbb T}
\newcommand\C{\mathbb C}
\newcommand\R{\mathbb R}
\newcommand\N{\mathbb N}
\newcommand\Pick{\mathcal P}
\newcommand\Schur{\mathcal{S}}
\newcommand\df{\stackrel{\rm def}{=}}
\newcommand\nn{\nonumber}
\DeclareMathOperator{\schur}{schur}
\DeclareMathOperator\diag{diag}
\DeclareMathOperator\rank{rank}
\let\phi\varphi
\begin{document}
\title{The boundary Carath\'{e}odory-Fej\'{e}r interpolation problem}

\author{Jim Agler, Zinaida A. Lykova and N. J. Young}


\date{12 December 2010}

\begin{abstract}
We give a new solvability criterion for the {\em boundary Carath\'{e}odory-Fej\'{e}r problem}: given a  point $x \in \R$ and,  a finite set of target values, to construct a function $f$ in the Pick class such that the first few derivatives of $f$ take on the prescribed target values at $x$.  We also derive a linear fractional parametrization of the set of solutions of the interpolation problem.  The proofs are based on a reduction method due to Julia and Nevanlinna.
\end{abstract}

\thanks {Jim Agler was partially supported by National Science Foundation Grant DMS 0801259. N. J. Young was partially supported by London Mathematical Society Grant 4918 and by EPSRC Grant EP/G000018/1.}

\keywords{Pick class, boundary interpolation, parametrization, Hankel matrix, Schur complement.}

\maketitle
\markboth{Jim Agler, Zinaida A. Lykova and N. J. Young}
{The boundary Carath\'{e}odory-Fej\'{e}r interpolation problem}

\section{Introduction} \label{intro}

The {\em Carath\'{e}odory-Fej\'{e}r problem} \cite{CF} is to determine whether a given finite sequence of complex numbers comprises the initial Taylor coefficients of a function analytic in the unit disc $\D$ and having non-negative real part at each point of $\D$.
A ``boundary" version of the problem was introduced by R. Nevanlinna \cite{Nev1922} in 1922. Here one prescribes the first $n+1$ coefficients of the 
Taylor expansion of a function about a point of the {\em boundary} of $\D$.
In fact Nevanlinna studied the corresponding question for functions in 
the {\em Pick class} $\Pick$, which is defined to be the set of analytic functions $f$ on the upper half plane 
$$
\Pi\df \{z\in\C: \im z > 0\}
$$
such that $\im f \geq 0$ on $\Pi$. 
In this paper we shall study the following variant of the Carath\'{e}odory-Fej\'{e}r problem, in which the interpolation node lies on the real axis.\\

\noindent {\bf Problem  $\partial CF\Pick$}:
\quad {\em
Given a point $x \in \R$, a non-negative integer $n$  and numbers 
$a^{-1}, a^0, $ $\dots, $ $ a^{n} \in \C,$ determine whether there exists a function $f$ in the Pick class such that $f$ is analytic in a deleted neighbourhood of $x$ and
\beq \label{Prob_interp_LinPi}
L_k(f,x) = a^k, \qquad k=-1,0,1,\dots,n,
\eeq
where $L_k(f,x)$ is the $k$th Laurent coefficient of $f$ at $x$.
}

The nomenclature $\partial CF\Pick$ follows that introduced by D. Sarason in \cite{S}. 
Functions in the Pick class can have simple poles on the real axis, and so the boundary version of the  Carath\'{e}odory-Fej\'{e}r problem makes allowance for such poles.  

In this paper we give a new solvability criterion for Problem $\partial CF\Pick$ and an explicit parametrization of all solutions. We use only elementary methods; we believe that this will make our results easily accessible to engineers working in control and signal processing, where boundary interpolation questions arise (see \cite[Part VI]{bgr}, \cite{LimAnd}).

 We say that Problem $\partial CF\Pick$ is {\em solvable} if it has at least one solution, 
that is, if there exists a function $f\in\Pick$, meromorphic at $x$, such that the equations (\ref{Prob_interp_LinPi}) hold.  We say that the problem
is {\em determinate} if it has exactly one solution and is {\em indeterminate} if it has at least two solutions (and hence, by the convexity of the solution set, infinitely many solutions).  

Solvability of Problem $\partial CF\Pick$ is closely related to positivity of Hankel matrices.
For a finite indexed sequence $a=(a^{-1}, a^0, a^1, \dots,a^n)$ or $a=(a^0, a^1, \dots,a^n)$ and positive integer $m$ such that $2m-1 \leq n$ we define the associated Hankel matrix $H_m(a)$ by
\[
H_m(a) =[a_{i+j-1}]_{i,j=1}^{m}.
\]
We shall say that the Hankel matrix
$H_{m}(a)$ 
is {\em southeast-minimally positive} if $H_{m}(a) \ge 0$ and, for every~ $\varepsilon >0$, $H_{m}(a) - {\rm diag}\{0,0,\dots, \varepsilon\}$ is not positive.
We shall abbreviate ``southeast-minimally" to ``SE-minimally".

To state our main result we need the following notation. 
\begin{definition}\label{rho(a)} \rm
For a finite or infinite sequence  $a=(a^{-1},a^0, a^1, \dots )$ of complex numbers we define $\rho(a) \in \N \cup \{\infty \}$ by
\[
\rho(a) = \inf  \, \{k \ge 0: \im a^k \neq 0 \},
\]
 with the understanding that  $\rho(a)=\infty$ if all terms of the sequence are real.
\end{definition}

\begin{theorem} \label{main_theorem}
Let $a=(a^{-1},a^0,\dots,a^n)\in \C^{n+2}$ where $n\geq 1,\, a^{-1} \le 0$ and $\im a^0\geq 0$. \\
{\rm (1)}   If  $\rho(a)$  is finite and odd then Problem $\partial CF\Pick$  has no solution.   \\
{\rm (2)}  If $\rho(a) = 2m$ for some non-negative integer $m$ then Problem $\partial CF\Pick$  is solvable if and only if  $H_m(a) > 0$ and $\im a^{2m} > 0$.\\
{\rm (3)} If $\rho(a) = \infty$ then 
 for any odd positive integer $n$,
Problem $\partial CF\Pick$ is solvable if and only if  the associated Hankel matrix $H_{m}(a)$, $n=2m -1$, is either positive definite or SE-minimally positive. Moreover, the problem has a {\em unique} solution if and only if $H_m(a)$ is SE-minimally positive. \\
{\rm (4)} If $\rho(a) = \infty$ then for any even positive integer $n$,
Problem $\partial CF\Pick$ is solvable if and only if either the associated Hankel matrix $H_{m}(a)$, $n=2m$, is positive definite or both $H_{m}(a)$ is SE-minimally positive and $a^{n}$ satisfies 
\beq \label{form_a_2m_Main}
a^{n} = \left[ \begin{array}{cccc}
  a^m & a^{m+1} & \dots &a^{m+r-1}\end{array}\right] H_r(a)^{-1}
\left[\begin{array}{c} a^{m+1} \\ a^{m+2}\\ \cdot \\ a^{m+r} \end{array}\right] 
\eeq 
where $r=\rank H_m(a)$.
Moreover, the problem has a {\em unique} solution if and only if $H_m(a)$ is SE-minimally positive and $a^{n}$ satisfies equation {\rm (\ref{form_a_2m_Main})}.

\end{theorem}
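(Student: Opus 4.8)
The plan is to treat the four cases through a combination of a Schwarz-reflection argument, which yields the parity and sign obstructions, and the Julia--Nevanlinna reduction, which converts the Hankel positivity conditions into a base case by induction. First I would normalize $x=0$ by translation, so that a putative solution $f\in\Pick$ is analytic on a punctured disc about $0$ with convergent Laurent expansion $f(z)=a^{-1}/z+a^0+a^1 z+\cdots$. For the necessity of the parity statement in (1) and of the sign condition $\im a^{2m}>0$ in (2), I would exploit that $f$ is genuinely analytic at each small real $t\neq 0$, so that $\im f(t)=\lim_{y\to 0^+}\im f(t+\ii y)\ge 0$ because $\im f\ge 0$ throughout $\Pi$. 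Comparing $f$ with $g(z)=\overline{f(\bar z)}$, whose real pole term cancels, gives $\im f(t)=\im(a^{\rho})\,t^{\rho}+O(t^{\rho+1})$ for real $t$, where $\rho=\rho(a)$. Non-negativity for both signs of $t$ then forces $\rho$ to be even with $\im a^{\rho}>0$; an odd $\rho$ would force $\im a^{\rho}=0$, contradicting the definition of $\rho$, which is exactly statement (1), and the even case yields the stated sign constraint in (2).

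The core of the argument is a reduction lemma that converts a solution of the degree-$n$ problem into a solution of a problem with two fewer data, via an explicit linear-fractional transformation built from the two Pick-class symmetries $f\mapsto f-c$ (real $c$) and $f\mapsto -1/f$. When $a^{-1}<0$ I would pass to $-1/f$, which lies in $\Pick$ and is analytic and vanishing at $0$, thereby trading the pole for a boundary zero; when $a^{-1}=0$ and $a^0$ is real I would instead form $-1/(f-a^0)$. In either case I would compute the new Laurent coefficients $\tilde a^k$ as explicit rational functions of the $a^k$ and verify the crucial algebraic fact: the Hankel matrix $H_m(a)$ is congruent to the direct sum of a positive scalar and $H_{m-1}(\tilde a)$, so that the reduction realizes exactly one step of Schur complementation. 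Consequently $H_m(a)>0$ if and only if that scalar is positive and $H_{m-1}(\tilde a)>0$, and SE-minimal positivity is likewise inherited. Iterating, the Hankel conditions in (2)--(4) are equivalent to the solvability of a base-case problem of degree $0$ or $1$, which I would solve directly; the residual free parameter in that base case is what ultimately produces the linear fractional parametrization of all solutions.

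With the reduction in hand I would assemble the four cases. In (2), $\rho(a)=2m$ with $a^1,\dots,a^{2m-1}$ real, so $H_m(a)$ is a real Hankel matrix; $m$ successful reduction steps terminate in the interior-like base case $\im a^0>0$, and tracking positivity gives solvability if and only if $H_m(a)>0$ together with the already-established $\im a^{2m}>0$. For the all-real cases (3) and (4) the dichotomy between indeterminacy and determinacy is governed by whether $H_m(a)$ is positive definite or merely SE-minimally positive: positive definiteness leaves a free Pick-class parameter at the terminal step, whence infinitely many solutions, while SE-minimal positivity forces the terminal parameter to a single admissible (necessarily rational) value, whence the unique solution. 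In the even case (4) this terminal rigidity manifests as a constraint on the top coefficient, which I would identify with the classical Hankel extension recurrence for a rank-$r$ positive semidefinite Hankel matrix, namely formula (\ref{form_a_2m_Main}) with $r=\rank H_m(a)$; I would then show that when $H_m(a)$ is SE-minimally positive the problem is solvable precisely when $a^{2m}$ takes this forced value, which also pins down the determinacy criterion.

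The main obstacle, I expect, is the bookkeeping at the degenerate boundary of the positive cone: verifying that SE-minimal positivity is exactly preserved by Schur complementation, that the rank $r=\rank H_m(a)$ is the correct index controlling the terminal step, and that the forced terminal value is genuinely (\ref{form_a_2m_Main}) rather than some congruent but distinct expression. The positive-definite, indeterminate branch is comparatively routine, since every reduction step stays in the interior of the cone and the freedom is transparent; the real work is showing that rank deficiency propagates correctly through the linear-fractional transformations, thereby establishing both the uniqueness statements and the explicit formula.
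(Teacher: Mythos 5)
Your overall skeleton matches the paper's (a boundary sign argument for (1) and for $\im a^{2m}>0$; Julia--Nevanlinna reduction matched with Schur complementation of Hankel matrices; a Kronecker-rank argument producing the forced coefficient (\ref{form_a_2m_Main})), but two of your key mechanisms have genuine gaps. First, the reduction engine. You claim the reduction is an explicit linear-fractional transformation built from the two Pick-class symmetries $f\mapsto f-c$ ($c$ real) and $f\mapsto -1/f$, with the rules: pole problem $\mapsto -1/f$, analytic real problem $\mapsto -1/(f-a^0)$. These two rules undo one another: from an analytic problem, $-1/(f-a^0)$ is a pole problem, and your pole rule then returns $f-a^0$; so the scheme cycles and never removes any data. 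The operation that actually reduces the data count by two is the pole subtraction, i.e.\ the correction term $1/(a^1(z-x))$ in (\ref{reducef}) (equivalently, Proposition \ref{a-1toa0}). That term is not a translation by a real constant, it is not itself in $\Pick$ (its imaginary part is negative on $\Pi$), and the fact that the corrected function nevertheless lies in $\Pick$ is Julia's lemma (Theorem \ref{propfg}, quoted from \cite{AY10}) --- the one non-elementary analytic input, which your two symmetries cannot generate. Separately, your base case for (2), the problem with $\im a^0>0$, is asserted to be solvable ``directly'', but it requires an actual construction: the paper devotes Section \ref{interior_inter} (Theorem \ref{a0inPi} and Proposition \ref{Schur_a0=0}, with the auxiliary functions $h_N$) to exactly this.

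Second, the singular case, which you defer as ``bookkeeping'', is the crux of (3) and (4), and your sketch does not reach it. You must show that solvability forces $H_m(a)$ to be positive definite \emph{or SE-minimally positive}, not merely positive semidefinite; this distinction is precisely where Nevanlinna's own criterion fails (Section \ref{nevanlinna} exhibits a positive, singular, non-SE-minimal $H_3(c)$ whose problem has no solution), so it cannot be dismissed as routine. The paper obtains both this necessity and the existence/uniqueness assertions in the SE-minimal case through an auxiliary relaxed problem $\partial CF\Pick'(\R)$ (final interpolation condition weakened to an inequality): it is solvable iff $H_m(a)\ge 0$ and determinate iff $H_m(a)$ is positive and singular (Theorem \ref{probprimeHD}), after which a perturbation of the SE corner played against uniqueness of the relaxed solution yields Theorem \ref{sarasonHD_odd}. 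Your alternative --- propagating SE-minimal positivity forward through Schur complements to a ``terminal step'' --- founders because in the singular case the forward reduction itself breaks down (some leading coefficient $a^1(j)$ vanishes before $m$ steps are completed), so there is no terminal free parameter to pin down; and nothing in your sketch supplies the converse (necessity) direction at all. Note finally that the necessity of $H_m(a)>0$ in (2) rests on the same inductive reduction engine, so it inherits the first gap.
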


Weaker notions of solvability are also of interest. It transpires, though, that  Problem $\partial CF\Pick$ has solutions 
in any reasonable weak sense  if and only if it
is solvable the strongest possible sense as defined here (the function is rational and analytic at the interpolation nodes); we plan to show this in a future paper. 

There is an extensive literature on boundary interpolation problems, well summarized in \cite[Notes For Part V]{bgr}.
We mention particularly papers of J. A. Ball and J. W. Helton \cite{bh},
D. Sarason \cite{S}, D. R. Georgijevi\'c \cite{Geo98,Geo05}, I.V. Kovalishina \cite{ko89} and V. Bolotnikov and A. Kheifets \cite{BolKh08}, and the books of J. A. Ball, I. C. Gohberg and L. Rodman \cite{bgr}, and of V. Bolotnikov and H. Dym \cite{BD}.
These authors make use of Krein spaces, moment theory, measure theory, reproducing kernel theory, realization theory and de Branges space theory.  They obtain far-reaching results, including generalizations to matrix-valued functions and to functions allowed to have a limited number of poles in a disc or half plane. See in particular a recent paper of  V. Bolotnikov \cite{Bol09} which treats the analogous problem for the Schur class and contains some results similar to ours; proofs are given in \cite{Bol10}.

 There is also a tradition of elementary treatments of interpolation problems (for example, \cite{Mar,power}).
In this paper we go back to the  Nevanlinna-Julia recursion
technique and derive a new solvability criterion and a parametrization of all solutions without the need for any Hilbert space notions beyond positivity of matrices. 

The paper is organised as follows. In Section \ref{reduction} we describe Julia's reduction procedure and its inverse, and give important properties of  reduction and augmentation in the Pick class.  In Section \ref{hankelreduce} we state and prove an identity which shows that Julia reduction of functions corresponds to Schur complementation of Hankel matrices. In Section \ref{interior_inter} we show that Problem $\partial CF\Pick$ has  a solution in the case that $a^0 \in\Pi$.
In Section \ref{posit_definite} we consider the case of  the boundary Carath\'{e}odory-Fej\'{e}r problem $\partial CF\Pick(\R)$ with all real $a^i$: we show that if the  Hankel matrix is positive definite then Problem $\partial CF\Pick(\R)$ is solvable for all  positive integers $n$. 
In Section \ref{relax_probl} we give a solvability criterion for a relaxation of Problem $\partial CF\Pick(\R)$, in which one equation in the interpolation conditions (\ref{Prob_interp_LinPi}) is replaced by an inequality.  
In Section \ref{atapoint} we prove Theorem \ref{main_theorem}, that is, we give  a solvability criterion for  Problem $\partial CF\Pick$.  In Section \ref{param} we give a parametrization of solutions of  Problem $\partial CF\Pick$, including an explicit formula for the solution in the determinate case (Theorem \ref{unique_sol_odd}). 
We also derive a linear fractional parametrization of the set of solutions of the interpolation problem. 
In Section \ref{nevanlinna} we discuss Nevanlinna's treatment of a  Carath\'{e}odory-Fej\'{e}r problem with the interpolation node at $\infty$.  We point out an inaccuracy in one of his statements about solvability of the problem and give a counter-example. In Section \ref{conclud} we close with a few comparisons of our results and methods with those of some other authors.

We shall write the imaginary unit as $\ii$, in Roman font, to have $i$ available for use as an index.   We denote the open unit disc by $\D$ and the  unit circle $\{z: |z|=1\}$  by $\T$.

\section{Julia reduction and augmentation in the Pick class} \label{reduction}

The main tool of this paper is a technique for passing from a function in the Pick class to a simpler one and back again.    Nevanlinna used a recursive technique for eliminating interpolation conditions at points on the real line \cite{Nev1}.  The reduction procedure (in the case of a function analytic at an interpolation point) is due to G. Julia \cite{Ju20}. It is analogous to the better-known ``Schur reduction" for functions in the Schur class \cite{Schur}.
See \cite{AY10} for a detailed updated account of reduction and augmentation in the Pick class.
 For any $x\in\R$ we shall say that a function $f\in\Pick$ {\em  is analytic at $x$} if $f$ extends to a function analytic in a neighbourhood of $x$.

\begin{definition} \label{defreduce} \rm
 (1)  For any non-constant function $f\in\Pick$ and any $x\in\R$ such that $f$ is analytic at $x$ and $f(x)\in\R$ we define the {\em reduction of $f$ at $x$} to be the function $g$ on $\Pi$ given by the equation
\beq \label{reducef}
g(z) = -\frac{1}{f(z)-f(x)} + \frac{1}{f'(x)(z-x)}.
\eeq

 (2)  For any $g\in\Pick$, any $x\in\R$ such that $g$ is meromorphic at $x$ and any $a^0\in\R$, $a^1 > 0$, we define the {\em augmentation of $g$ at $x$ by $a^0, a^1$} to be the function $f$ on $\Pi$ given by
\beq \label{augment2}
f(z)= a^0 + \frac{1}{\frac{1}{a^1(z-x)} -g(z)}.
\eeq
\end{definition}

\begin{remark} {\rm Let $g$ be a real rational function of degree $m$ and let $f$ be 
the augmentation of $g$ at $x$ by $a^0, a^1>0$. Then $f$ is a real rational function of degree $m+1$.}
\end{remark}

Here, as usual, the degree of a rational function $f = \frac{p}{q}$ is defined to be the maximum of the degrees of $p$ and $q$, where $p$, $q$ are polynomials in their lowest terms.

We shall need the following basic properties of the Pick class $\Pick$ (for example,
\cite[Proposition 3.1]{AY10}).
\begin{proposition} \label{posderiv}
Let $f$ be a non-constant function in the Pick class and let $x\in\R$.
\begin{enumerate}
\item[\rm(1)] If $f$ is analytic and real-valued at $x$ then $f'(x)>0$.
\item[\rm(2)] If $f$ is meromorphic and has a pole at $x$ then $f$ has a {\em simple} pole at $x$, with a negative residue.
\end{enumerate}
\end{proposition}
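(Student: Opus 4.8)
The plan is to treat both parts by examining the leading term of the (Taylor or Laurent) expansion of $f$ at $x$ and exploiting the constraint $\im f\ge 0$ on $\Pi$. As a preliminary I would record that, since $f$ is non-constant and $\im f$ is a non-negative harmonic function on $\Pi$, the minimum principle forces $\im f>0$ throughout $\Pi$: if $\im f$ vanished at an interior point it would vanish identically, making $f$ a real constant. In particular $f$ has no zeros in $\Pi$, which will be needed in part (2).

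For part (1), since $f$ is analytic at $x$ with $f(x)\in\R$, I would write $f(z)-f(x)=\sum_{k\ge 1}a_k(z-x)^k$ and let $p\ge 1$ be the least index with $a_p\neq 0$ (such $p$ exists because $f$ is non-constant). Setting $a_p=|a_p|\e^{\ii\phi}$ and $z=x+\rho\,\e^{\ii\theta}$ with $\rho>0$ small and $\theta\in(0,\pi)$, so that $z\in\Pi$, the leading term gives
\[
\im f(z)=\rho^{p}\,|a_p|\,\sin(\phi+p\theta)+o(\rho^{p}).
\]
Because $\im f(z)\ge 0$ for all sufficiently small $\rho>0$, I conclude $\sin(\phi+p\theta)\ge 0$ for every $\theta\in(0,\pi)$. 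As $\theta$ runs through $(0,\pi)$ the argument $\phi+p\theta$ sweeps out an interval of length $p\pi$ on which $\sin$ is non-negative; since the maximal intervals on which $\sin\ge 0$ have length $\pi$, this forces $p=1$. Passing to the closed interval $[0,\pi]$ by continuity, $\sin(\phi+\theta)\ge 0$ on an interval of length exactly $\pi$ then pins down $\phi\equiv 0\pmod{2\pi}$, so that $f'(x)=a_1=|a_1|>0$.

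For part (2), I would reduce to part (1) by passing to $h\df -1/f$. The M\"obius map $w\mapsto -1/w$ carries $\Pi$ into $\Pi$, and $h$ is genuinely defined on $\Pi$ because $f$ has no zeros there, so $h\in\Pick$. Since $f$ has a pole at $x$ we have $f(z)\to\infty$, hence $h(z)\to 0$, so $h$ is analytic at $x$ with $h(x)=0\in\R$. Part (1) applied to $h$ gives $h'(x)>0$. Writing $h(z)=h'(x)(z-x)+O\big((z-x)^{2}\big)$ and inverting, $f=-1/h$ has a simple pole at $x$ with residue $-1/h'(x)<0$, as required.

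The step I expect to be the main obstacle is the rigorous handling of the leading-term estimate in part (1): one must justify that the $\rho^{p}$ term genuinely controls the sign of $\im f(z)$ uniformly enough as $\theta$ varies, and in particular use the constraint across the whole range $(0,\pi)$ both to rule out the higher-order vanishing $p\ge 2$ and, via the closed-interval limit, to fix the argument $\phi$. Once that asymptotic argument is made precise, part (2) follows formally from the reduction to $h=-1/f$.
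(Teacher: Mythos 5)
Your proof is correct, and it is more self-contained than what the paper offers: the paper does not actually prove Proposition \ref{posderiv}, but cites \cite[Proposition 3.1]{AY10}, supplementing this with two sketches --- a Cauchy--Riemann argument in the Notes after Theorem \ref{propfg} (from $v>0$ on $\Pi$, $v(x)=0$ one gets $v_\eta(x)\geq 0$, hence $u_\xi(x)\geq 0$, and $v_\xi(x)=0$ since $v|_\R$ has a minimum at $x$, giving only $f'(x)\geq 0$) and a passing remark in Section \ref{atapoint} that strict non-vanishing of $f'(x)$ follows ``from consideration of Taylor expansions.'' Your angular-sweep argument makes that Taylor-expansion remark precise and in one stroke delivers both the strict inequality $f'(x)>0$ and the exclusion of higher-order vanishing $p\geq 2$, where the paper's Cauchy--Riemann sketch alone only yields non-negativity; and your reduction of part (2) to part (1) via $h=-1/f$ (legitimate, since $\im(-1/w)=\im w/|w|^2$ shows $h\in\Pick$, and $\im f>0$ on $\Pi$ guarantees $h$ is defined there) handles the pole case, which the paper does not sketch at all. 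One comment on the step you flag as the main obstacle: no delicate uniformity is needed. Fix $\theta\in(0,\pi)$ and let $\rho\to 0^+$; since $f$ is analytic on a closed disc about $x$, the Taylor remainder is $O(\rho^{p+1})$ uniformly there, so $\im f(x+\rho\e^{\ii\theta})=\rho^p\bigl(|a_p|\sin(\phi+p\theta)+O(\rho)\bigr)$, and $\sin(\phi+p\theta)<0$ would force $\im f<0$ for small $\rho$. The pointwise-in-$\theta$ conclusion $\sin(\phi+p\theta)\geq 0$ on $(0,\pi)$ is all your interval-length argument requires, and that argument (an open interval of length $p\pi>\pi$ must meet $\{\sin<0\}$, so $p=1$; then the closed interval $[\phi,\phi+\pi]$ of length exactly $\pi$ pins $\phi\equiv 0 \pmod{2\pi}$) is sound.
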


The important property of the operations of reduction and augmentation is that they preserve the Pick class. The following is contained in \cite[Theorem 3.4]{AY10}. 

\begin{theorem} \label{propfg}
Let $x \in\R$.
\begin{enumerate}
\item[\rm(1)] If a non-constant function $f\in\Pick$  is analytic and real-valued at $x$ then the reduction of $f$ at $x$ also belongs to $\Pick$ and is analytic at $x$.
\item[\rm(2)] If $g\in\Pick$ is analytic at $x$ and $a^0\in\R,\, a^1>0$ then the augmentation $f$ of $g$ at $x$ by $a^0,\, a^1$  belongs to $\Pick$, is analytic at $x$  and satisfies $f(x)=a^0, \, f'(x) = a^1.$  
\end{enumerate}
\end{theorem}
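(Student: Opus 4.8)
The plan is to treat the two parts separately, handling the augmentation (2) first because the required positivity is essentially free, and then the reduction (1), where controlling the sign of $\im g$ is the real work. Throughout, recall that a function $h$ analytic on $\Pi$ lies in $\Pick$ if and only if $\im h\ge 0$ on $\Pi$, so in each case it suffices to verify analyticity on $\Pi$ together with the pointwise sign of the imaginary part.

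For (2), write $m(z)=\tfrac{1}{a^1(z-x)}$. For $z\in\Pi$ one computes $\im m(z)=-\tfrac{\im z}{a^1|z-x|^2}<0$ since $a^1>0$, whereas $\im g(z)\ge 0$ because $g\in\Pick$. Hence $\im\bigl(m(z)-g(z)\bigr)<0$ throughout $\Pi$; in particular $m-g$ never vanishes on $\Pi$, so $f=a^0+1/(m-g)$ is analytic there, and since $\zeta\mapsto 1/\zeta$ carries the lower half plane into the upper one, $\im f>0$ on $\Pi$, giving $f\in\Pick$. For the local data at $x$, I would expand: as $g$ is analytic at $x$, $m(z)-g(z)=\tfrac{1}{a^1(z-x)}\bigl(1-a^1g(x)(z-x)+O((z-x)^2)\bigr)$, so inverting and adding $a^0$ yields $f(z)=a^0+a^1(z-x)+O((z-x)^2)$, which shows $f$ is analytic at $x$ with $f(x)=a^0$ and $f'(x)=a^1$. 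This completes (2).

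For (1), first note $f'(x)>0$ by Proposition~\ref{posderiv}(1). Analyticity of $g$ is routine: since $f$ is non-constant in $\Pick$, the minimum principle applied to the harmonic function $\im f$ gives $\im f>0$ on $\Pi$, so $f(z)\ne f(x)$ and $z\ne x$ for $z\in\Pi$, making both terms of (\ref{reducef}) analytic on $\Pi$; near $x$ the simple zero of $f(z)-f(x)$ with leading coefficient $f'(x)\ne0$ means $-1/(f(z)-f(x))$ has principal part $-1/(f'(x)(z-x))$, exactly cancelled by the added term, so $g$ extends analytically across $x$. The crux is $\im g\ge0$ on $\Pi$. A direct computation gives
\[
\im g(z)=\frac{\im f(z)}{|f(z)-f(x)|^2}-\frac{\im z}{f'(x)\,|z-x|^2},
\]
so I must establish $\dfrac{|f(z)-f(x)|^2}{\im f(z)}\le f'(x)\,\dfrac{|z-x|^2}{\im z}$. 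My plan is to derive this (Julia's inequality) from positivity of a two-point Pick matrix. For $z,w\in\Pi$ the Pick matrix of $f$, with entries $\tfrac{f(u)-\overline{f(v)}}{u-\bar v}$, is positive semidefinite by the standard Loewner/Schwarz--Pick positivity for the Pick class. Letting $w\to x$ and using $f(x)\in\R$ together with $\tfrac{\im f(w)}{\im w}\to f'(x)$, one obtains the boundary Pick matrix
\[
\begin{pmatrix}\dfrac{\im f(z)}{\im z} & \dfrac{f(z)-f(x)}{z-x}\\[2mm]\dfrac{\overline{f(z)}-f(x)}{\bar z-x} & f'(x)\end{pmatrix}\ge 0 .
\]
Since $f'(x)>0$, non-negativity of its determinant is exactly the required inequality, whence $\im g\ge 0$; combined with the analyticity already established, this yields $g\in\Pick$ analytic at $x$.

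I expect the genuine obstacle to be the limiting step producing the boundary Pick matrix: one must justify passing $w\to x$ (say along the vertical direction) inside the positivity and identify the limiting entries using analyticity of $f$ at $x$, the delicate point being the diagonal limit $\tfrac{\im f(w)}{\im w}\to f'(x)$. Everything else — the sign bookkeeping in (2) and the cancellation of the pole in (1) — is routine. It is worth recording that this determinant computation is the scalar shadow of a kernel identity, namely that the Nevanlinna kernel of $g$ equals $\phi(z)\overline{\phi(w)}$ times the Schur complement of $f'(x)$ in the Pick kernel of $f$, where $\phi=1/(f-f(x))$; this is the function-theoretic form of the principle, developed in Section~\ref{hankelreduce} at the level of Hankel matrices, that Julia reduction corresponds to Schur complementation.
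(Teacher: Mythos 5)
Your proof is correct, but note that the paper itself contains no argument to compare against: Theorem \ref{propfg} is simply quoted from \cite[Theorem 3.4]{AY10}. Your part (2) is the routine sign bookkeeping (negative imaginary part of $1/(a^1(z-x))-g$ on $\Pi$, hence no zeros and $\im f>0$, plus the local expansion at $x$), and it is complete. Your part (1) is a self-contained rendition of Julia's classical lemma: you reduce $\im g\ge 0$ to the inequality $|f(z)-f(x)|^2/\im f(z)\le f'(x)\,|z-x|^2/\im z$ and obtain it as the determinant condition on the boundary limit of the two-point Nevanlinna--Pick matrix, taking $w\to x$ vertically so that $\im f(w)/\im w\to f'(x)$ follows from the Taylor expansion of $f$ at $x$; your restriction to the vertical direction is exactly what renders the ``delicate'' diagonal limit harmless, since the error is $O(|w-x|^2)/\im w\to 0$ along that path, and entrywise limits of positive matrices are positive. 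The only external input is positive semidefiniteness of the $2\times 2$ Pick matrix, which after a Cayley transform is just the two-point Schwarz--Pick inequality, so your argument stays within the paper's stated policy of using nothing beyond positivity of matrices. What the citation to \cite{AY10} buys the authors instead is the stronger statement recorded in Note (4) following the theorem: reduction and augmentation for functions satisfying only Carath\'eodory's condition at $x$, with angular derivatives in place of genuine ones. Your proof does not cover that case, but the theorem as stated concerns only functions analytic at $x$, so it fully suffices; your closing remark linking the determinant computation to the Schur-complement identity of Section \ref{hankelreduce} is apt, though not needed for the proof.
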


\noindent {\bf Notes.} (1) Under the assumptions of Definition \ref{defreduce} (1) we have $f'(x) > 0$.  For
suppose $f(\xi+\ii\eta)=u+\ii v$ with $\xi,\eta, u, v$ real: since $v>0$ on $\Pi$ and 
$v(x) =0$ we have  $ v_\eta (x)\geq 0$ and hence, by the Cauchy-Riemann equations, $u_\xi (x)\geq 0$. Furthermore the restriction of $v$ to a neighbourhood of $x$ in $\R$ attains its minimum  at $x$, and so $v_\xi=0$ at $x$.  Hence $f'(x)=(u_\xi+\ii v_\xi)(x) \geq 0$. \\

(2) A function $g$ obtained by reduction is not a general element of $\Pick$.
For if $g$ is the reduction of a Pick function meromorphic at $x$ then $g$ is analytic at $x$. \\

(3) Reduction and augmentation at a point of analyticity are of course inverse operations. \\ 

(4) Reduction and augmentation also apply to a wider class of functions in $\Pick$.

Specifically, if $f\in\Pick$ satisfies Carath\'eodory's condition at $x\in\R$, that is,
\[
\liminf_{z\to x}\frac{\im f(z)}{\im z} < \infty,
\]
then the reduction of $f$ at $x$ exists and belongs to $\Pick$.  The augmentation $f$ of $g\in\Pick$ at $x$ by $a^0\in\R, a^1>0$ lies in $\Pick$, satisfies Carath\'eodory's condition at $x$ and $f(x)=a^0, \ f'(x)\leq a^1$, where $f'(x)$ denotes the angular derivative of $f$ at $x$.  See \cite[Section 6]{AY10} for a full treatment.

It is simple to work out the relation between the Taylor series of a function and its reduction.
\begin{proposition} \label{taylorcoeffs}
Let $x\in \R$. Let $f$ be  analytic at $x$ and satisfy
 $f'(x) \neq 0$, and let $g$ be the reduction of $f$ at $x$ (so that $g$ is given by the equations  {\rm  (\ref{reducef})}). 
Let the Taylor expansions of $f, \, g$ about $x$ be
\[
f(z)= \sum_{j=0}^\infty f_j (z-x)^j, \quad g(z)= \sum_{j=0}^\infty g_j (z-x)^j.
\]
Then the Taylor coefficients $f_j$ and $g_j$ are related by
\beq\label{fandg}
\sum_{j=0}^\infty f_{j+1}\la^j \sum_{j=0}^\infty g_j\la^j= \frac{1}{f_1}\sum_{j=-1}^\infty f_{j+2}\la^j,
\eeq
or equivalently,
\beq \label{fandgbis}
\left[ \begin{array}{cccc} f_1 & 0 & 0 & \dots \\ f_2 & f_1 & 0 & \dots \\ f_3 & f_2 & f_1 & \dots \\ \cdot & \cdot & \cdot & \dots \end{array} \right]
 \left[ \begin{array}{c} g_0 \\ g_1 \\ g_2 \\ \cdot \end{array}\right] =
 \frac{1}{f_1} \left[ \begin{array}{c} f_2 \\ f_3 \\ f_4 \\ \cdot \end{array}\right].
\eeq
\end{proposition}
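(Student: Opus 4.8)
The plan is to convert the defining relation (\ref{reducef}) into an identity between the convergent Taylor series of $f$ and of $g$ about $x$, and then to read off the two displayed conclusions by matching coefficients. Throughout I set $\la=z-x$ and factor the simple zero of $f(z)-f(x)$ at $x$ by writing
\[
f(z)-f(x)=\la\,F(\la),\qquad F(\la)=\sum_{j=0}^\infty f_{j+1}\la^j,
\]
so that $F$ is analytic near $0$ with $F(0)=f_1=f'(x)\neq 0$. Before manipulating series I would first check that $g$ really does possess a Taylor expansion at $x$: the term $1/(f(z)-f(x))=1/(\la F(\la))$ has a simple pole at $x$ with principal part $1/(f_1\la)$, and this is cancelled exactly by the subtracted term $1/(f'(x)(z-x))=1/(f_1\la)$ in (\ref{reducef}), so $g$ is analytic at $x$ and may be expanded as $g(z)=\sum_{j=0}^\infty g_j\la^j$. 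This step deserves explicit attention because Proposition \ref{taylorcoeffs} assumes only that $f$ is analytic with $f'(x)\neq 0$, not that $f\in\Pick$, so Theorem \ref{propfg} is not directly available.

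The core of the argument is a single multiplication. Multiplying (\ref{reducef}) by $F(\la)$ and using $F(\la)/(f(z)-f(x))=1/\la$ gives
\[
g(z)\,F(\la)=-\frac{1}{\la}+\frac{F(\la)}{f_1\la}.
\]
Expanding the last term as $\dfrac{F(\la)}{f_1\la}=\dfrac{1}{f_1}\sum_{j=0}^\infty f_{j+1}\la^{j-1}=\dfrac{1}{f_1}\sum_{j=-1}^\infty f_{j+2}\la^j$ produces the right-hand side of (\ref{fandg}); its $j=-1$ term is precisely $1/\la$ and cancels the $-1/\la$, which confirms both that the product $g(z)F(\la)$ is analytic and that
\[
\Bigl(\sum_{j=0}^\infty f_{j+1}\la^j\Bigr)\Bigl(\sum_{j=0}^\infty g_j\la^j\Bigr)=\frac{1}{f_1}\sum_{j=0}^\infty f_{j+2}\la^j.
\]
This is the content of the series identity (\ref{fandg}), the $j=-1$ term there recording the two cancelling principal parts coming from the summands of (\ref{reducef}).

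Finally I would obtain the matrix form (\ref{fandgbis}) by equating coefficients of $\la^{i-1}$ on the two sides of the last display. On the left the coefficient is the Cauchy product $\sum_{k=1}^{i}f_k\,g_{i-k}$, which is exactly row $i$ of the lower-triangular Toeplitz matrix with entries $f_1,f_2,f_3,\dots$ applied to the column $(g_0,g_1,g_2,\dots)^{\top}$; on the right it is $f_{i+1}/f_1$. Assembling these scalar equations over $i=1,2,3,\dots$ gives (\ref{fandgbis}) verbatim, and no further convergence input is needed beyond the analyticity of $F$ and $g$ near $x$, which legitimises the termwise expansion of their product. The computation itself is routine; the only genuine subtleties are the removable-singularity check of the first paragraph, which justifies treating $g$ as a power series, and keeping the index shifts consistent when passing between $f_{j+1}$, $f_{j+2}$ and the Toeplitz pattern.
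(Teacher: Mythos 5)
Your proof is correct and is essentially the paper's own argument: both clear denominators in (\ref{reducef}) (you by multiplying through by $F(\la)=\sum_{j\ge 0}f_{j+1}\la^j$, the paper by combining the two terms of (\ref{reducef}) over a common denominator) and then match Taylor coefficients to obtain the product identity and its Toeplitz form (\ref{fandgbis}). Your additional remarks --- the removable-singularity check showing $g$ is analytic at $x$, and the reading of the $j=-1$ term in (\ref{fandg}) as recording the cancelling principal parts --- are sound clarifications but do not change the route.
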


\begin{proof} Write $\la = z-x$.  Then
\begin{eqnarray*}
\sum g_j\la^j &=& - \frac{1}{f(z)-f_0} + \frac{1}{f_1\la}
   = \frac{f(z)-f_0-f_1\la}{f_1\la(f(z)-f_0)} \\
  &=& \frac{ \sum_{j=2} f_j\la^j}{ f_1 \sum_{j=1} f_j \la^{j+1}}.
\end{eqnarray*}
Hence 
\[
(f_1+f_2\la+f_3\la^2+\dots)(g_0+g_1\la+g_2\la^2+\dots)= \frac{1}{f_1}(f_2+f_3\la+f_4\la^2+\dots),
\]
which relation can also be expressed by the matricial formula (\ref{fandg}).   \end{proof}

\begin{lemma}\label{2augment} If $g$, $G \in \C[[z]]$ and $g(z) -G(z) = O(z^N)$ as $z \to 0$ then the augmentations $f, F$ of  $g$, $G$ respectively at $0$ by $a^0$ and $a^1 (\neq 0)$ satisfy 
\beq\label{fF}
f(z) -F(z) = (g(z) - G(z))(F(z)-a^0)(f(z)-a^0)
\eeq
and $f(z) -F(z) = O(z^{N+2})$ as $z \to 0$.
\end{lemma}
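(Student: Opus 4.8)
The plan is to exploit the fact that the augmentation formula (\ref{augment2}) becomes especially transparent when rewritten in terms of the reciprocal $1/(f - a^0)$. Taking $x = 0$ in (\ref{augment2}) and inverting, I would first record the identities
\beq
\frac{1}{f(z) - a^0} = \frac{1}{a^1 z} - g(z), \qquad \frac{1}{F(z) - a^0} = \frac{1}{a^1 z} - G(z).
\eeq
Here the $1/(a^1 z)$ term contributes a simple pole, so each right-hand side is an invertible element of the field of formal Laurent series; its reciprocal is in fact a genuine power series, since $\frac{1}{a^1 z} - g = (a^1 z)^{-1}(1 - a^1 z\, g)$ has reciprocal $a^1 z\,(1 - a^1 z\, g)^{-1} = a^1 z + O(z^2)$. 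Thus $f - a^0$ and $F - a^0$ both lie in $\C[[z]]$ and each is $O(z)$.

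Next I would subtract the two identities above. The singular terms $1/(a^1 z)$ cancel exactly, leaving
\beq
\frac{1}{f - a^0} - \frac{1}{F - a^0} = G - g.
\eeq
Since the left-hand side equals $\dfrac{-(f - F)}{(f - a^0)(F - a^0)}$, multiplying through by $(f - a^0)(F - a^0)$ and rearranging yields exactly the asserted identity (\ref{fF}).

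The order estimate then falls out of (\ref{fF}) by counting orders of vanishing. By hypothesis $g - G = O(z^N)$, while the preliminary computation gives $f - a^0 = O(z)$ and $F - a^0 = O(z)$. Multiplying these three factors shows $f - F = O(z^{N + 2})$.

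There is no serious obstacle: the whole argument is a short formal manipulation in $\C[[z]]$. The only point deserving care is the justification that each reciprocal $1/(f - a^0)$ is a legitimate power series despite the pole of $1/(a^1 z)$ at the origin; this is precisely what licenses the clean cancellation of the singular terms and, through the estimate $f - a^0 = O(z)$, is what converts the $O(z^N)$ agreement of $g$ and $G$ into the sharper $O(z^{N+2})$ agreement of $f$ and $F$.
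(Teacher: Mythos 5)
Your proposal is correct and follows essentially the same route as the paper: the reciprocal identity $\tfrac{1}{f-a^0}=\tfrac{1}{a^1z}-g$ and the cancellation of the singular terms is precisely the ``routine calculation'' the paper invokes to obtain (\ref{fF}), and the order count $O(z^N)O(z)O(z)=O(z^{N+2})$ is identical. Your explicit justification that $f-a^0$ and $F-a^0$ are genuine power series of order $O(z)$ is a welcome filling-in of a detail the paper leaves implicit, but it is not a different argument.
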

\begin{proof} A routine calculation gives equation (\ref{fF}), and so we have
\[
f(z) -F(z) = (g(z) - G(z))(F(z)-a^0)(f(z)-a^0) = O(z^N)O(z)O(z) =O(z^{N+2}).
\]
\end{proof}

\begin{lemma}\label{2augment_relax} Let  $x\in \R$ and $a^0,a^1,\dots,a^{n+2} \in \R$.  Let $f \in \Pick$  be analytic at  $x$ and
\beq 
\frac{f^{(k)}(x)}{k!} = a^k, \qquad k=0,1,\dots,n+1.
\eeq
Let $g$ be the reduction of $f$ at $x$, and let $G(z)=\sum_0^\infty g_j (z-x)^j$ be the reduction of $F(z)=\sum_0^{n+2} a^j (z-x)^j$ at $x$.
Then \beq \label{f_g_relax}
\frac{f^{(n+2)}(x)}{(n+2)!} - a^{n+2} =  (a^1)^2 \left(\frac{g^{(n)}(x)}{n!} - g_{n}\right).
\eeq
\end{lemma}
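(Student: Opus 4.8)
\emph{Plan.} The plan is to deduce the identity from Lemma \ref{2augment}, whose exact form (\ref{fF}) carries precisely the factor $(a^1)^2$ that appears in (\ref{f_g_relax}). After translating I may assume $x=0$ and write $\lambda=z$. The first observation is that $f$ is the augmentation of its own reduction $g$ by $a^0,a^1$: substituting (\ref{reducef}) into (\ref{augment2}) and using $f(x)=a^0,\ f'(x)=a^1$ collapses $\frac{1}{a^1 z}-g(z)$ to $\frac{1}{f(z)-a^0}$, which recovers $f$ (this is the inverse relationship noted in Note (3)). Here $f'(x)=a^1>0$ by Proposition \ref{posderiv}(1), so the reduction is defined. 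The same algebraic substitution, applied to the polynomial $F$ with $F(x)=a^0$ and $F'(x)=a^1\neq0$, shows that $F$ is the augmentation of $G$ by $a^0,a^1$ as formal power series (no membership of $F$ in $\Pick$ is needed, since (\ref{reducef}) and Lemma \ref{2augment} are purely formal identities valid whenever $a^1\neq0$). Thus $f,F$ are exactly the augmentations of $g,G$ to which Lemma \ref{2augment} applies.

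Next I would pin down the order to which the two reductions agree. Since $f^{(k)}(x)/k!=a^k$ for $k\le n+1$ and $F=\sum_{0}^{n+2}a^j(z-x)^j$, we have $f(z)-F(z)=O(z^{n+2})$. Suppose the reductions agreed only to order $M<n$, say $g(z)-G(z)=c\,z^M+O(z^{M+1})$ with $c\neq0$. Since $(F-a^0)(f-a^0)=(a^1)^2 z^2+O(z^3)$, the exact identity (\ref{fF}) would give $f(z)-F(z)=(a^1)^2 c\,z^{M+2}+O(z^{M+3})$ with nonzero leading coefficient (because $a^1\neq0$), contradicting $f-F=O(z^{n+2})$. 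Hence $g(z)-G(z)=O(z^{n})$; equivalently, the Taylor coefficients of $g$ and $G$ coincide through order $n-1$.

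Finally, writing $\gamma_n$ for the $n$th Taylor coefficient of $g$ and $g_n$ for that of $G$, the previous step gives $g(z)-G(z)=(\gamma_n-g_n)z^n+O(z^{n+1})$. Feeding this into (\ref{fF}) and again using $(F-a^0)(f-a^0)=(a^1)^2 z^2+O(z^3)$ yields
\[
f(z)-F(z)=(a^1)^2(\gamma_n-g_n)\,z^{n+2}+O(z^{n+3}).
\]
Comparing the coefficient of $z^{n+2}$ on the two sides — on the left it is $f^{(n+2)}(x)/(n+2)!-a^{n+2}$, on the right it is $(a^1)^2(\gamma_n-g_n)=(a^1)^2\big(g^{(n)}(x)/n!-g_n\big)$ — delivers (\ref{f_g_relax}).

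The main obstacle is the middle step: controlling exactly how much precision is lost under reduction. It does not suffice to know that augmentation raises the order of agreement by two; one must use the \emph{exact} identity (\ref{fF}) together with $a^1\neq0$ to rule out cancellation and confirm that reduction lowers the order of agreement by precisely two, so that the discrepancy of $g,G$ at order $n$ matches the discrepancy of $f,F$ at order $n+2$. A self-contained alternative avoids Lemma \ref{2augment} altogether: applying the triangular system (\ref{fandgbis}) of Proposition \ref{taylorcoeffs} to both $(f,g)$ and $(F,G)$, one shows by induction on the rows that the two reductions share their first $n$ Taylor coefficients (since $f_k=a^k$ for $k\le n+1$ and the diagonal entry $f_1=a^1\neq0$), and then subtracts the two $n$th-row equations, in which $f_1=a^1$ and the lower coefficients coincide, to isolate $(a^1)^2(\gamma_n-g_n)=f^{(n+2)}(x)/(n+2)!-a^{n+2}$.
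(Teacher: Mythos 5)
Your proof is correct, and it follows the same backbone as the paper's: both hinge on the exact identity (\ref{fF}) of Lemma \ref{2augment}, applied to $f$ and $F$ regarded as the augmentations of $g$ and $G$ at $x$ by $a^0,a^1$, together with the expansion $(F-a^0)(f-a^0)=(a^1)^2(z-x)^2+O((z-x)^3)$ and a comparison of coefficients of $(z-x)^{n+2}$. The one genuine point of divergence is how you reach the intermediate estimate $g-G=O((z-x)^n)$. The paper gets it from Proposition \ref{taylorcoeffs}: the triangular system (\ref{fandgbis}) shows that the first $n$ Taylor coefficients of a reduction are determined by $f_1,\dots,f_{n+1}$ alone, and these data are shared by $f$ and $F$, so the two reductions agree through order $n-1$. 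You instead argue by contradiction inside (\ref{fF}): a first discrepancy of $g-G$ at some order $M<n$ would, after multiplication by $(F-a^0)(f-a^0)$ with nonzero leading coefficient $(a^1)^2$, force a nonzero coefficient of $f-F$ at order $M+2<n+2$, contradicting $f-F=O((z-x)^{n+2})$. Your route is self-contained modulo Lemma \ref{2augment} and isolates the key structural fact that reduction shifts the order of first discrepancy by exactly two (no cancellation, because $a^1\neq 0$), whereas the paper's route is more direct but leans on Proposition \ref{taylorcoeffs}; your closing ``alternative'' via (\ref{fandgbis}) is essentially the paper's own argument. You are also slightly more scrupulous than the paper in noting that (\ref{fF}) applies to $F$ as a purely formal augmentation of $G$ (no membership of $F$ in $\Pick$ is needed) and that $a^1=f'(x)>0$ by Proposition \ref{posderiv}, so that both reductions are defined.
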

\begin{proof}
 By assumption,
\beq \label{f-F_2_relax}
f(z) -F(z) = f(z) - \sum_0^{n+2} a^j (z-x)^j =
\eeq 
\[
\left(\frac{f^{(n+2)}(x)}{(n+2)!} - a^{n+2} \right)(z-x)^{n+2} 
+O((z-x)^{n+3})
\]
as $z \to x$, since $f^{(k)}(x)/k! = a^k$ for $k=0,1,\dots,n+1$.
In view of Proposition \ref{taylorcoeffs},
\beq \label{g-G_relax}
g(z)-G(z) = g(z) - \sum_0^\infty g_j (z-x)^j = \left(\frac{g^{(n)}(x)}{n!} - g_{n}\right)(z-x)^{n} + O((z-x)^{n+1})
\eeq 
as $z \to x$. As in  Lemma \ref{2augment},
\beq \label{f-Fg-G_relax}
f(z) -F(z) = (g(z) - G(z))(F(z)-a^0)(f(z)-a^0) 
\eeq 
as $z \to x$.
It is easy to see that
$F(z)-a^0=a^1(z-x) + O((z-x)^2)$ and $f(z)-a^0=a^1(z-x) + O((z-x)^2)$ as $z \to x$. Therefore equations (\ref{f-Fg-G_relax}), (\ref{f-F_2_relax}) and (\ref{g-G_relax}) imply  
\beq 
\frac{f^{(n+2)}(x)}{(n+2)!} - a^{n+2} = \left(\frac{g^{(n)}(x)}{n!} - g_{n}\right) (a^1)^2. \qquad 
\eeq
 \end{proof}


Although Problem $\partial CF\Pick$ is formulated for functions {\em meromorphic} at $x$, in fact the crux of the problem is the analytic case.
\begin{proposition}\label{a-1toa0}
Problem $\partial CF\Pick$ is solvable 
if and only if $a^{-1} \le 0$ and there exists  a function
 $\tilde{f}$ in the Pick class such that $\tilde{f}$ is analytic at  $x$ and
\beq \label{interpcond_Taylor}
\frac{\tilde{f}^{(k)}(x)}{k!} = a^k, \qquad k=0,1,\dots,n.
\eeq
\end{proposition}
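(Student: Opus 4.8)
The plan is to reduce Problem $\partial CF\Pick$, which concerns functions merely meromorphic at $x$, to the interpolation problem for functions analytic at $x$, by stripping off the principal part of a solution at $x$. The device that mediates both directions is the function
\[
g_0(z)=\frac{a^{-1}}{z-x},
\]
and the first thing I would record is that $g_0\in\Pick$ precisely when $a^{-1}\le 0$: a direct computation gives $\im g_0(z)=-a^{-1}\,\im z/|z-x|^2$ for $z\in\Pi$, which is non-negative on $\Pi$ if and only if $a^{-1}\le 0$.

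The easy direction is the ``if'' part. Assuming $a^{-1}\le 0$ and a function $\tilde f\in\Pick$ analytic at $x$ with $\tilde f^{(k)}(x)/k!=a^k$ for $k=0,1,\dots,n$, I would set $f=\tilde f+g_0$. Since $\Pick$ is closed under addition and both summands lie in $\Pick$, we get $f\in\Pick$; as $g_0$ contributes only the term $a^{-1}(z-x)^{-1}$, the function $f$ is meromorphic at $x$ with $L_k(f,x)=a^k$ for $k=-1,0,\dots,n$, so $f$ solves the problem.

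For the converse, suppose $f\in\Pick$ is meromorphic at $x$ and solves the problem. By Proposition \ref{posderiv}(2), either $f$ is analytic at $x$, whence $a^{-1}=L_{-1}(f,x)=0$, or $f$ has a simple pole at $x$ with negative residue $a^{-1}=L_{-1}(f,x)$; in either case $a^{-1}\le 0$, which is the first assertion. I would then put $\tilde f=f-g_0$. Because $g_0$ is exactly the principal part of $f$ at $x$, the singularity of $\tilde f$ at $x$ is removable, so $\tilde f$ is analytic at $x$; and since $g_0$ affects only the $(z-x)^{-1}$ coefficient, $\tilde f^{(k)}(x)/k!=a^k$ for $k=0,1,\dots,n$.

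The one step that requires genuine work, and which I expect to be the main obstacle, is showing that $\tilde f\in\Pick$. One cannot argue pointwise, since $\im\tilde f=\im f-\im g_0$ is a difference of two non-negative quantities. The resolution is the Nevanlinna integral representation of Pick functions: a simple pole of $f$ at $x$ with residue $a^{-1}$ corresponds to an atom of mass $-a^{-1}\ge 0$ at $x$ in the representing measure $\mu$. Subtracting $g_0$ removes exactly this atom, up to an additive real constant, so that $\tilde f$ has a Nevanlinna representation with the same linear term, a real constant, and the non-negative measure $\mu-(-a^{-1})\delta_x$. Hence $\tilde f\in\Pick$, which completes the argument.
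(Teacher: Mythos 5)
Your proof is correct, and its skeleton is the same as the paper's: split off the principal part $a^{-1}/(z-x)$, get $a^{-1}\le 0$ from Proposition \ref{posderiv}(2), and use additivity of the Pick class for the ``if'' direction. The difference is in the one genuinely nontrivial step, which you correctly isolate: that $\tilde f=f-a^{-1}/(z-x)$ stays in $\Pick$. The paper disposes of this in one line by invoking the lemma of Julia, citing \cite[Theorem 3.4]{AY10} as a black box; you instead prove it via the Nevanlinna integral representation, deleting the atom at $x$ from the representing measure $\mu$ and absorbing the leftover term $\mu(\{x\})\,x/(1+x^2)$ into the real constant. Your argument is sound; the only detail left implicit is the identification $\mu(\{x\})=-a^{-1}$, which requires the standard limit $\mu(\{x\})=\lim_{\varepsilon\downarrow 0}\varepsilon \im f(x+\ii\varepsilon)$ (dominated convergence on the representation side, and the Laurent expansion of $f$ on the other); since this is classical, it is a fair step to assert. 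As for what each route buys: yours makes the proposition self-contained rather than dependent on an external reference, but it imports the Herglotz--Nevanlinna representation, i.e.\ exactly the measure-theoretic machinery this paper's stated philosophy avoids (the authors advertise that they need nothing beyond positivity of matrices); the paper's citation keeps the exposition elementary in that sense by outsourcing the measure theory to \cite{AY10}, where Julia's lemma is established once and for all, also for the wider class of functions satisfying only Carath\'eodory's condition at $x$.
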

\begin{proof} 
Suppose that $f$ is a solution of Problem $\partial CF\Pick$.
By Proposition \ref{posderiv}, if $f$ is meromorphic and has a pole at $x$ then $f$ has a {\em simple} pole at $x$, with a negative residue. Therefore $a^{-1} \in \R$ and $a^{-1} \le 0$.  By the lemma of Julia (see for example \cite[Theorem 3.4]{AY10}) the function
\[
\tilde f(z) = f(z) - \frac{a^{-1}}{z-x}
\]
also belongs to the Pick class.  Clearly $\tilde f$ is analytic at $x$ and satisfies the equations (\ref{interpcond_Taylor}).

Conversely, suppose that $a^{-1} \le 0$ and $\tilde{f} \in\Pick$ satisfies the equations (\ref{interpcond_Taylor}).  The function $a^{-1}/(z-x)$ is in the Pick class, and so therefore is the function
 \[
f(z) = \frac{a^{-1}}{z-x} + \tilde{f}(z),
\]
which is thus a solution of Problem $\partial CF\Pick$.
 \end{proof}

\section{An identity for Hankel matrices} \label{hankelreduce}

The function-theoretic relation between a function $f$  and its reduction $g$ at a point corresponds to an identity relating their associated Hankel matrices.  In fact the matricial identity holds more generally -- for formal power series.

\begin{theorem} \label{hankident}
 Let
\[
f=\sum_{j=0}^\infty f_j z^j, \qquad g=\sum_{j=0}^\infty g_j z^j
\]
 be formal power series over a field $F$ with $f_1\neq 0$.  Suppose that $g$ is the reduction of $f$ at $0$, that is,
\beq \label{relfg}
 g= -\frac{1}{f-f_0} + \frac{1}{f_1 z}.
\eeq
Then
\begin{eqnarray} \label{hankelid}
\left[ \begin{array}{cccc} f_3 & f_4 & f_5 & \dots \\ f_4 & f_5 & f_6 & \dots \\
   \cdot & \cdot & \cdot & \dots \end{array} \right] &- &\frac{1}{f_1} \left[
   \begin{array}{c} f_2 \\ f_3 \\ \cdot \end{array}\right] \left[ \begin{array}{cccc}
  f_2 & f_3 & f_4 & \dots \end{array}\right] \nn  \\   & = & T_f \left[ \begin{array}{cccc} 
  g_1 & g_2 & g_3 & \dots \\ g_2 & g_3 & g_4 & \dots \\ \cdot & \cdot & \cdot & \dots\end{array} \right] T_f^T
\end{eqnarray}
where $T_f$ is the infinite Toeplitz matrix
\beq\label{deftoep}
T_f = \left[ \begin{array}{cccc} f_1 & 0 & 0 & \dots \\ f_2 & f_1 & 0 & \dots \\ \cdot & \cdot & \cdot & \dots \end{array}
\right]
\eeq
and the superscript $T$ denotes transposition.
\end{theorem}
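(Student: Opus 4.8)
The plan is to encode every infinite matrix occurring in (\ref{hankelid}) by its two-variable generating function and to show that, under this encoding, the asserted matrix identity collapses to a one-line identity of formal power series in two variables $s,t$ over $F$. Since $f_1\neq 0$, the series $\tilde f(z)\df (f(z)-f_0)/z=\sum_{p\ge1}f_pz^{p-1}$ is a unit in $F[[z]]$; I write $V(z)=\sum_{i\ge1}f_{i+1}z^{i-1}$ for the generating function of the vector $v=(f_2,f_3,\dots)^{T}$, so that $\tilde f(z)=f_1+zV(z)$. To a matrix $M=[M_{ij}]_{i,j\ge1}$ I attach $\Phi_M(s,t)=\sum_{i,j\ge1}M_{ij}s^{i-1}t^{j-1}$. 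Because $T_f$ is lower triangular, each entry of $T_fMT_f^{T}$ is a finite sum, so all the products in (\ref{hankelid}) are well defined over $F$ and this encoding is legitimate.

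First I would set up the dictionary between matrix operations and operations on generating functions. If $u=(u_1,u_2,\dots)^{T}$ has generating function $U(t)=\sum_j u_jt^{j-1}$, then a straightforward reindexing shows that $T_fu$ has generating function $\tilde f(t)U(t)$; applying this on the left and, by transposition, on the right yields the congruence rule $\Phi_{T_fMT_f^{T}}(s,t)=\tilde f(s)\,\tilde f(t)\,\Phi_M(s,t)$. Next, a Hankel matrix with $(i,j)$ entry $h_{i+j-1}$ has, via the elementary summation $\sum_{a+b=k-1}s^at^b=(t^{k}-s^{k})/(t-s)$, the generating function $(\hat h(t)-\hat h(s))/(t-s)$, where $\hat h(z)=\sum_{k\ge1}h_kz^{k}$. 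Applied to $H(g)=[g_{i+j-1}]$ this gives $(g(t)-g(s))/(t-s)$, and applied to the Hankel matrix $[f_{i+j+1}]$ on the left of (\ref{hankelid}) it gives $(V(t)-V(s))/(t-s)$, after noting that here $\hat h=V-f_2$ so that the constant cancels. Finally the rank-one term contributes $\Phi_{\frac{1}{f_1}vv^{T}}(s,t)=\frac{1}{f_1}V(s)V(t)$.

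With this dictionary, identity (\ref{hankelid}) is equivalent to the scalar identity
\[
\frac{V(t)-V(s)}{t-s}-\frac{1}{f_1}V(s)V(t)=\tilde f(s)\,\tilde f(t)\,\frac{g(t)-g(s)}{t-s}
\]
in $F[[s,t]]$. To prove it I would invoke Proposition \ref{taylorcoeffs}, whose relation (\ref{fandgbis}) is exactly the formal statement $\tilde f(z)g(z)=\frac{1}{f_1}V(z)$; the proof of that proposition is a pure power-series computation and so applies verbatim to formal series. Substituting $g(z)=V(z)/(f_1\tilde f(z))$ into the right-hand side turns it into $\frac{1}{f_1}\bigl(V(t)\tilde f(s)-V(s)\tilde f(t)\bigr)/(t-s)$, and then replacing $\tilde f=f_1+zV$ makes the bracket collapse to $f_1\bigl(V(t)-V(s)\bigr)-(t-s)V(s)V(t)$, which reproduces the left-hand side exactly.

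The genuinely routine part is this last substitution. The step that carries the content, and that I expect to be the main thing to get right, is the translation dictionary itself — in particular verifying that congruence by the Toeplitz matrix $T_f$ corresponds to multiplication by $\tilde f(s)\tilde f(t)$, and that a Hankel matrix corresponds to a divided difference $(\hat h(t)-\hat h(s))/(t-s)$. Once these two correspondences are in hand, the infinite-matrix identity reduces to the single power-series identity above, and the reduction relation supplied by Proposition \ref{taylorcoeffs} finishes the proof.
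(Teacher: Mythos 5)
Your proof is correct, and it is essentially the paper's own argument: the paper likewise works with two-variable formal power series, first establishing $zw(f(z)-f(w))=(z-w)\sum_{i,j\ge 1}f_{i+j-1}z^iw^j$ (your Hankel/divided-difference dictionary), then multiplying the reduction relation through by $(f(z)-f_0)(f(w)-f_0)/(zw)=\tilde f(z)\tilde f(w)$ (your Toeplitz-congruence rule) and equating coefficients of $z^iw^j$. The only difference is organizational: you modularize the dictionary and finish by substituting the one-variable relation $\tilde f(z)g(z)=V(z)/f_1$ from Proposition \ref{taylorcoeffs}, whereas the paper performs the equivalent elimination directly in the two-variable identity before reading off coefficients.
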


\begin{remark}\label{convergence} {\rm There are no issues of convergence in equation (\ref{hankelid}).  Taking the $(i,j)$ entries of both sides, we find that the identity is equivalent to
\[
 f_{i+j+1} - \frac{ f_{i+1}f_{j+1}}{f_1} = \sum_{k=1}^i \sum_{\ell=1}^j f_k
  g_{i+j+1-k-\ell} f_\ell
\]
for all $i, j \geq 1$.}
\end{remark}

\begin{proof}
In the algebra $F[[z,w]]$ of formal power series over $F$ in commuting indeterminates $z,w$,
\begin{eqnarray*}
f(z)-f(w) &=& \sum_{j=1}^\infty f_j(z^j-w^j)  = (z-w)\sum_{j=1}^\infty f_j(z^{j-1}+z^{j-2}w+\dots+w^{j-1})\\
  &=& (z-w)\sum_{i,j=0}^\infty f_{i+j+1} z^iw^j,
\end{eqnarray*}
and hence
\[
 zw(f(z)-f(w)) = (z-w)\sum_{i,j=1}^\infty f_{i+j-1} z^iw^j.
\]
Since $g$ is the reduction of $f$ at $0$ we have
\begin{eqnarray*}
zw(g(z)-g(w)) &=& zw\left\{ -\frac{1}{f(z)-f_0}+\frac{1}{f_1z} + \frac{1}{f(w)-f_0} -\frac{1}{f_1w} \right\} \\
&=& \frac{zw}{(f(z)-f_0)(f(w)-f_0)}(f(z)-f(w)) - \frac{z-w}{f_1}.
\end{eqnarray*}
On multiplying through by $(f(z)-f_0)(f(w)-f_0)/(zw)$ and writing as a power series we find that
\[
  (z-w)\left\{\sum_{i,j=0}^\infty f_{i+j+1} z^iw^j - \frac{1}{f_1}\sum_{r=0}^\infty f_{r+1} z^r \sum_{s=0}^\infty f_{s+1} w^s \right\}=
\]
\[
(z-w)\sum_{r=0}^\infty f_{r+1}z^r \sum_{\al,\beta=1}^\infty g_{\al+\beta-1} z^\al w^\beta \sum_{s=0}^\infty f_{s+1}z^s =
\]
\[
(z-w) \sum_{i,j=1}^\infty z^iw^j  \sum_{r=0}^{i-1} \sum_{s=0}^{j-1} f_{r+1} g_{i-r+j-s-1}f_{s+1}.
\]
Cancel $z-w$ and equate the coefficients of $z^iw^j$ for $i,j \geq 1$ to obtain
\[
f_{i+j+1} - \frac{f_{i+1}f_{j+1}}{f_1} =\sum_{r=0}^{i-1} \sum_{s=0}^{j-1} f_{r+1} g_{i-r+j-s-1}f_{s+1},
\]
which is precisely equation (\ref{hankelid}) (put $r=k-1,\, s= \ell-1$ in Remark \ref{convergence} above).
 \end{proof}

Some terminology: for a matrix $M$, ``positive" means the same as ``positive semi-definite", and is written $M \geq 0$, whereas $M>0$ means that $M$ is positive definite. 


In a block matrix 
\[
M=\left[ \begin{array}{cc} A & B \\ C & D \end{array} \right],
\]
where $A$ is non-singular, the {\em Schur complement} of $A$ is defined to be $D-CA^{-1}B$.  By virtue of the identity
\beq\label{schid}
M=\left[ \begin{array}{cc} A & B \\ C & D \end{array} \right] = \left[\begin{array}{cc}1 & 0 \\ CA^{-1} & 1 \end{array}\right]
 \left[\begin{array}{cc}A & 0 \\ 0 & D-CA^{-1}B \end{array}\right]
 \left[\begin{array}{cc}1 & A^{-1}B \\ 0 & 1 \end{array}\right]
\eeq
it is clear that $\mathrm{rank}~(D-CA^{-1}B) = \mathrm{rank}~M - \mathrm{rank}~A$ and
\beq\label{det_schur}
{\det}\left[\begin{array}{cc} A & B\\ C & D \end{array}\right]= {\det}(A) \det (D - CA^{-1}B).
\eeq
 Furthermore, if $A>0$ then $M\geq 0$ if and only if $C=B^*$ and $D-CA^{-1}B\geq 0$.

For any $n \times n$ matrix $A = [a_{ij}]$ with $a_{11} \neq 0$ we define
$\schur A$ to be the Schur complement of $[a_{11}]$ in $A$. Thus 
$\schur A$ is $(n-1)$-square.

\begin{corollary} \label{schur_compl}
Let
\[
f=\sum_{j=0}^\infty f_j z^j, \qquad g=\sum_{j=0}^\infty g_j z^j
\]
 be formal power series over $\C$ with $f_1\neq 0$ and $f_0,f_1,\dots,f_n \in\R$, and let  $g$ be the reduction of $f$ at $0$.  Then, for any $n \geq 1$,
\beq \label{nbynhankel_schur}
\schur H_{n+1}(f) = 
 \left[ \begin{array}{cccc} f_1 & 0 & \dots & 0\\ f_2 & f_1 & \dots & 0 \\ 
\cdot & \cdot & \dots & \cdot \\
 f_n & f_{n-1} & \dots & f_1 \end{array}\right]  H_{n}(g)
\left[ \begin{array}{cccc} f_1 & f_2 & \dots & f_n\\ 0 & f_1 & \dots & f_{n-1}\\
 \cdot & \cdot & \dots & \cdot \\
 0 & 0 & \dots & f_1 \end{array}\right],
\eeq
\beq \label{schur_hank_det}
\det \left(\schur H_{n+1}(f) \right) = (f_1)^{2n} \det H_{n}(g),
\eeq
and
\beq \label{hank_det}
\det H_{n+1}(f) = (f_1)^{2n+1} \det H_{n}(g).
\eeq
\end{corollary}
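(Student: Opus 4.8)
The plan is to obtain all three formulas as direct consequences of Theorem \ref{hankident}, by restricting its infinite matricial identity to the top-left $n \times n$ corner and then taking determinants.

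First I would make the Schur complement explicit. Writing $H_{n+1}(f) = [f_{i+j-1}]_{i,j=1}^{n+1}$ in block form with $(1,1)$ entry $[f_1]$, the first row (after $f_1$) is $\begin{bmatrix} f_2 & \cdots & f_{n+1}\end{bmatrix}$, the first column is its transpose (real, as $f_0,\dots,f_n\in\R$), and the remaining $n\times n$ block has $(i,j)$ entry $f_{i+j+1}$. Hence the $(i,j)$ entry of $\schur H_{n+1}(f)$ is $f_{i+j+1} - f_{i+1}f_{j+1}/f_1$, which is exactly the top-left $n\times n$ corner of the left-hand side of the identity (\ref{hankelid}).

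Next I would identify the corner of the right-hand side. The point is that, because $T_f$ is lower triangular, the $(i,j)$ entry of $T_f H(g) T_f^T$ with $i,j\le n$ involves only $f_1,\dots,f_{\max(i,j)}$ and the Hankel entries $g_{k+\ell-1}$ with $k\le i,\ \ell\le j$; so truncation to the $n\times n$ corner commutes with the triple product and yields $L\,H_n(g)\,L^T$, where $L$ is the lower-triangular Toeplitz matrix in (\ref{nbynhankel_schur}). In fact the resulting entrywise equality is precisely the formula recorded in Remark \ref{convergence}, restricted to $1\le i,j\le n$, so no new computation is needed; this establishes (\ref{nbynhankel_schur}).

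The two determinant identities then follow at once. Since $L$ is triangular with constant diagonal $f_1$, we have $\det L=\det L^T=(f_1)^n$, and taking determinants in (\ref{nbynhankel_schur}) gives (\ref{schur_hank_det}). Applying the determinant-of-Schur-complement identity (\ref{det_schur}) with $A=[f_1]$ then gives $\det H_{n+1}(f)=f_1\det\!\big(\schur H_{n+1}(f)\big)=(f_1)^{2n+1}\det H_n(g)$, which is (\ref{hank_det}). I anticipate no real obstacle: the only step requiring a moment's care is the truncation in the third paragraph, where the lower-triangularity of $T_f$ is exactly what guarantees that passing to finite corners is legitimate.
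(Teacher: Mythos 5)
Your proposal is correct and follows essentially the same route as the paper: truncate the identity of Theorem \ref{hankident} (via the entrywise formula in Remark \ref{convergence}) to the $n\times n$ corner to get (\ref{nbynhankel_schur}), take determinants of the triangular Toeplitz factors for (\ref{schur_hank_det}), and invoke (\ref{det_schur}) with $A=[f_1]$ for (\ref{hank_det}). Your explicit justification that lower-triangularity of $T_f$ makes the truncation commute with the triple product is a point the paper leaves implicit, but it is the same argument.
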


\begin{proof} The left hand side of equation (\ref{hankelid}) is the Schur complement of the $(1,1)$ entry of the Hankel matrix
\[
\left[ \begin{array}{cccc} f_1 & f_2 & f_3 & \dots \\ f_2 & f_3 & f_4 & \dots \\ \cdot & \cdot & \cdot & \dots \end{array} \right].
\]
By virtue of Remark \ref{convergence}, if we take the $n\times n$ truncation of both sides in equation (\ref{hankelid}) we obtain the identity, for any $n \geq 1$,
\beq \label{nbynhankel}
\schur H_{n+1}(f) = \left[ \begin{array}{cccc} f_3 & f_4 &  \dots & f_{n+2}\\ f_4 & f_5 &  \dots & f_{n+3}\\
   \cdot & \cdot  & \dots &\cdot \\ f_{n+2} & f_{n+3} &  \dots & f_{2n+1}\end{array} \right] - \frac{1}{f_1} \left[
   \begin{array}{c} f_2 \\ f_3 \\ \cdot \\ f_{n+1} \end{array}\right] \left[ \begin{array}{cccc}
  f_2 & f_3 & \dots &f_{n+1}\end{array}\right]  =
\eeq
\[
 \left[ \begin{array}{cccc} f_1 & 0 & \dots & 0\\ f_2 & f_1 & \dots & 0 \\ 
\cdot & \cdot & \dots & \cdot \\
 f_n & f_{n-1} & \dots & f_1 \end{array}\right]
 \left[ \begin{array}{cccc}
  g_1 & g_2 &  \dots & g_n  \\ g_2 & g_3 &  \dots & g_{n+1}\\ 
 \cdot & \cdot &  \dots  &\cdot \\ g_n & g_{n+1} & \dots & g_{2n-1}   \end{array} \right] 
\left[ \begin{array}{cccc} f_1 & f_2 & \dots & f_n\\ 0 & f_1 & \dots & f_{n-1}\\
 \cdot & \cdot & \dots & \cdot \\
 0 & 0 & \dots & f_1 \end{array}\right]=
\]
\[
 \left[ \begin{array}{cccc} f_1 & 0 & \dots & 0\\ f_2 & f_1 & \dots & 0 \\ 
\cdot & \cdot & \dots & \cdot \\
 f_n & f_{n-1} & \dots & f_1 \end{array}\right]  H_{n}(g)
\left[ \begin{array}{cccc} f_1 & f_2 & \dots & f_n\\ 0 & f_1 & \dots & f_{n-1}\\
 \cdot & \cdot & \dots & \cdot \\
 0 & 0 & \dots & f_1 \end{array}\right]
\]
Moreover, if the last equation holds for each $n\geq 1$ then equation (\ref{hankelid}) is true.
One can see also that 
\[
\det \left(\schur H_{n+1}(f) \right) = (f_1)^{2n} \det H_{n}(g).
\]
By equation (\ref{det_schur}),
\[
\det H_{n+1}(f)= f_1 \det \left(\schur H_{n+1}(f)\right).
\]
In view of (\ref{schur_hank_det}),
\[
\det H_{n+1}(f)= f_1 \det \left(\schur H_{n+1}(f) \right) = f_1(f_1)^{2n} \det H_{n}(g).
\]
Therefore, 
\[
\det H_{n+1}(f) = (f_1)^{2n+1} \det H_{n}(g).\qquad 
\]
\end{proof}
\begin{corollary} \label{congruent}
Let
\[
f=\sum_{j=0}^\infty f_j z^j, \qquad g=\sum_{j=0}^\infty g_j z^j
\]
 be formal power series over $\C$ with $f_1\neq 0$ and $f_0,f_1,\dots,f_n \in\R$, and let  $g$ be the reduction of $f$ at $0$.  Then the $n\times n$ Hankel matrix 
$$H_n(g)=[g_{i+j-1}]_{i,j=1}^n$$ 
is congruent to the Schur complement of the $(1,1)$ entry in the $(n+1)\times(n+1)$ Hankel matrix 
$$H_{n+1}(f)=[f_{i+j-1}]_{i,j=1}^{n+1}.$$ 
 Consequently $H_{n+1}(f)> 0$ if and only if $f_1>0$ and $H_n(g)>0$.
\end{corollary}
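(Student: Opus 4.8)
The plan is to read off both assertions directly from the matricial identity (\ref{nbynhankel_schur}) of Corollary~\ref{schur_compl}, with essentially no further computation. Let $L$ denote the lower-triangular Toeplitz matrix with diagonal entries $f_1$ that stands as the left factor of the triple product on the right-hand side of (\ref{nbynhankel_schur}); the corresponding right factor is then exactly $L^T$. Since $f_0,f_1,\dots,f_n\in\R$, the matrix $L$ is real, and since $\det L=f_1^{\,n}\neq0$ it is nonsingular. Thus (\ref{nbynhankel_schur}) says $\schur H_{n+1}(f)=L\,H_n(g)\,L^T$, equivalently $H_n(g)=L^{-1}\bigl(\schur H_{n+1}(f)\bigr)(L^{-1})^T$, which exhibits $H_n(g)$ as congruent to the Schur complement of the $(1,1)$ entry of $H_{n+1}(f)$. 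That is the first assertion.

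For the equivalence I would apply the block factorization (\ref{schid}) to $M=H_{n+1}(f)$ with the $1\times1$ corner $A=[f_1]$, whose Schur complement is $\schur H_{n+1}(f)$. Here it helps to recall that ``$M>0$'' presupposes that $M$ is Hermitian; as $H_{n+1}(f)$ is Hankel, hence symmetric, this forces its entries to be real and, in particular, $f_1>0$. Granting Hermiticity, the strict form of the positivity criterion recalled before Corollary~\ref{schur_compl} gives $H_{n+1}(f)>0$ if and only if $f_1>0$ and $\schur H_{n+1}(f)>0$. Combining this with the congruence $\schur H_{n+1}(f)=L\,H_n(g)\,L^T$, in which $L$ is real and nonsingular, yields $\schur H_{n+1}(f)>0$ if and only if $H_n(g)>0$, and hence $H_{n+1}(f)>0$ if and only if $f_1>0$ and $H_n(g)>0$.

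The one delicate point, and the step I would treat most carefully, is the interplay between congruence and positive definiteness: the map $X\mapsto LXL^T$ in (\ref{nbynhankel_schur}) is built from the transpose, whereas positive definiteness is governed by the adjoint. This is precisely where the hypothesis $f_0,\dots,f_n\in\R$ is used, for it makes $L$ real, so that $L^T=L^*$ and $X\mapsto LXL^T$ is a genuine $*$-congruence, under which Hermitian positive-definite matrices go to Hermitian positive-definite matrices. One must also check that in the reverse implication the remaining coefficients of $f$ are real, so that $H_{n+1}(f)$ is indeed Hermitian; this can be traced back to the entrywise reduction identity of Remark~\ref{convergence}. With these reality observations in place, positive definiteness transfers across the congruence in both directions and the proof is complete.
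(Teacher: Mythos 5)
Your proposal is correct and follows essentially the same route as the paper's own proof: both read the congruence $\schur H_{n+1}(f)=T_{f,n}H_n(g)T_{f,n}^T$ directly off the identity (\ref{nbynhankel_schur}) of Corollary \ref{schur_compl}, using that the Toeplitz factor is real (so transpose equals adjoint) and invertible since $f_1\neq 0$, and then combine the Schur-complement criterion for positive definiteness of a Hermitian block matrix with the fact that congruence preserves positive definiteness. Your additional remark about tracing the reality of the higher coefficients through Remark \ref{convergence} is a sound extra precaution, not a different method.
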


\begin{proof} By Corollary \ref{schur_compl},
 the Schur complement of the $(1,1)$ entry of the $n+1$-square Hankel matrix $H_{n+1}(f)$ is 
$T_{f,n} H_n(g) T_{f,n}^T$ where $T_{f,n}$ is the $n\times n$ truncation of $T_f$. Since $T_{f,n}$ is a real matrix, $T_{f,n}^T$ is the adjoint of $T_{f,n}$.  Furthermore, since $f_1\neq 0$,  $T_{f,n}$ is invertible.  Thus the identity (\ref{nbynhankel_schur}) is a congruence between $H_n(g)$ and the Schur complement of the $(1,1)$ entry of $H_{n+1}(f)$.

The final statement follows from the facts that  (1) a hermitian matrix $\left[\begin{array}{cc} A & B\\ B^* & D \end{array}\right]$ is positive definite if and only if $A>0$ and the Schur complement of $A$  is positive definite, and (2) congruence preserves positive definiteness.
 \end{proof}
\section{Boundary-to-interior interpolation}\label{interior_inter}

In this section we prove solvability of Problem $\partial CF\Pick$ in the case that the target value $a^0$ has positive imaginary part, that is, lies in the interior of $\Pi$, not on the boundary.

Recall that we denote  by $\D$ the open unit disc $\{z:|z| < 1\}$
and by  $\overline{\D}$ the closed unit disc $\{z:|z| \le 1\}$. The {\em Schur class} $\Schur$ is the set of functions  analytic and have modulus bounded by 1 in $\D$.

\begin{theorem}\label{a0inPi}
Let $x\in \R$ and $a^{-1},a^0,a^1,\dots,a^n \in \C$,  
where $n\geq 1,\, a^{-1} \le 0$ and $\im a^0 > 0$. 
There exists  a function $f$ in the Pick class that is analytic in a deleted neighbourhood of $x$ and satisfies
\beq \label{interpcond_inter_LinPi}
L_k(f,x) = a^k, \qquad k=-1,0,1,\dots,n.
\eeq
\end{theorem}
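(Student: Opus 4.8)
The plan is first to strip off the pole and reduce to an interior Taylor interpolation problem, then to transfer that problem to the Schur class by a pair of Cayley transforms, where a scaling trick makes it trivially solvable by a polynomial. Since we are given $a^{-1}\le 0$, Proposition~\ref{a-1toa0} shows it is enough to produce $\tilde f\in\Pick$ that is analytic at $x$ with $\tilde f^{(k)}(x)/k!=a^k$ for $k=0,1,\dots,n$; the function $f(z)=a^{-1}/(z-x)+\tilde f(z)$ then solves the stated problem. The entire weight of the hypothesis $\im a^0>0$ is that $a^0$ lies in the open upper half plane, which gives room to match the remaining coefficients $a^1,\dots,a^n$ with no positivity constraint at all.

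Before constructing $\tilde f$ I would carry out a scaling reduction. For $\rho>0$ the affine map $z\mapsto x+(z-x)/\rho$ sends $\Pi$ into $\Pi$, so if $\hat f\in\Pick$ is analytic at $x$ with $\hat f(x)=a^0$ and $\hat f^{(k)}(x)/k!=\rho^k a^k$ for $k\ge 1$, then $\tilde f(z)\df\hat f\bigl(x+(z-x)/\rho\bigr)$ again lies in $\Pick$, is analytic at $x$, and has $\tilde f^{(k)}(x)/k!=a^k$ for every $k$. Thus it suffices to realise the scaled jet, and by choosing $\rho$ small we may assume at the outset that $a^1,\dots,a^n$ are as small in modulus as we wish, with $a^0$ fixed. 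If $a^1=\dots=a^n=0$ we may simply take $\tilde f\equiv a^0$, so assume this is not the case.

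I would then pass to the disc. Let $\theta(w)=\dfrac{w-a^0}{w-\overline{a^0}}$, which maps $\overline\Pi$ into $\overline\D$ and $\Pi$ onto $\D$ with $\theta(a^0)=0$, and fix a M\"obius map $\tau\colon\D\to\Pi$ with $\tau(1)=x$. For $\tilde f\in\Pick$ analytic at $x$, the composite $\Psi\df\theta\circ\tilde f\circ\tau$ is a Schur function, analytic at the boundary point $1\in\T$, with $\Psi(1)=\theta(a^0)=0$; conversely, any Schur function $\Psi$ with $\Psi(\D)\subseteq\D$ that is analytic at $1$ with $\Psi(1)=0$ yields, via $\tilde f\df\theta^{-1}\circ\Psi\circ\tau^{-1}$, a function in $\Pick$ analytic at $x$. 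Because $\theta$ and $\tau$ are biholomorphic near $a^0$ and $1$, the correspondence between the jet of $\tilde f$ at $x$ (with value $a^0$) and the jet of $\Psi$ at $1$ (with value $0$) is triangular and invertible, and each nonconstant coefficient of $\Psi$ is a polynomial, without constant term, in $a^1,\dots,a^n$; hence the target coefficients $d^1,\dots,d^n$ for $\Psi$ tend to $0$ as the (scaled) data do. With the $d^k$ this small I would simply set $\Psi(\zeta)=\sum_{k=1}^n d^k(\zeta-1)^k$: on $\overline\D$ one has $|\zeta-1|\le 2$, so $|\Psi|\le\sum_k|d^k|2^k\le 1$, whence $\Psi$ is a Schur function with $\Psi(\D)\subseteq\D$, $\Psi(1)=0$ and the prescribed jet. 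Transferring back produces the required $\hat f$, and undoing the scaling produces $\tilde f$.

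The routine composition bookkeeping I would suppress. The only real obstacle is making the Cayley step rigorous: verifying that the jet correspondence is genuinely an invertible triangular map, so that prescribing the jet of $\Psi$ is equivalent to prescribing that of $\tilde f$; that small half plane data produce small disc data; and above all that the function obtained by transferring $\Psi$ back truly lies in $\Pick$ and is analytic at $x$, for which one needs $\Psi(\D)\subseteq\D$ in order to stay clear of the pole of $\theta^{-1}$, which lies on $\T$.
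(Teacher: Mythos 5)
Your proof is correct, and although it shares the paper's outer skeleton --- stripping the pole via Proposition \ref{a-1toa0} and transferring to the Schur class by Cayley transforms --- the core construction is genuinely different. The paper reduces the theorem to a standalone statement (Proposition \ref{Schur_a0=0}): for an \emph{arbitrary} jet $b^0\in\D$, $b^1,\dots,b^n\in\C$ at a point $\tau\in\T$ there is a Schur-class function with that jet. This is proved by writing $\varphi=b^q(z-\tau)^q\psi$ with $\psi=Qh_N$, where the polynomial $Q$ matches the required jet and the damping factors $h_N(z)=1-\bigl(1-((\tau+z)/(2\tau))^N\bigr)^m$ equal $1+O((z-\tau)^m)$ at $\tau$, are uniformly bounded on $\overline\D$, and tend to $0$ uniformly on compact subsets of $\T\setminus\{\tau\}$, so that for large $N$ the modulus constraint holds. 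You avoid this limiting construction entirely by exploiting the affine automorphisms $z\mapsto x+(z-x)/\rho$ of $\Pi$ fixing $x$, which scale the $k$th Taylor coefficient by $\rho^k$: the data $a^1,\dots,a^n$, and hence (by your triangular jet correspondence) the disc data $d^1,\dots,d^n$, may be assumed as small as desired, whereupon the naive polynomial $\sum_k d^k(\zeta-1)^k$ already satisfies the crude bound $\sum_k|d^k|2^k\le 1$ on $\overline\D$. The steps you flag as needing care are indeed all that is missing and all are routine: the jet correspondence is triangular with diagonal entries $\theta'(a^0)\tau'(1)^k\neq 0$ by the chain rule, and the strict inclusion $\Psi(\D)\subseteq\D$ (needed to avoid the pole of $\theta^{-1}$ at $1$) follows from the maximum principle since your $\Psi$ is a nonconstant polynomial of sup-norm at most $1$ (the constant case $\Psi\equiv 0$ being trivial). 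What each approach buys: yours is more elementary, dispensing with the family $h_N$ and its uniform-convergence argument, and it cleanly exploits the half-plane geometry; the paper's route, at the cost of that machinery, yields Proposition \ref{Schur_a0=0} in full strength --- arbitrary, not merely small, boundary jets in the Schur class --- which is of independent interest and is not recovered by your smallness normalization. One cosmetic point: fix the Cayley map concretely, e.g.\ $\tau(\zeta)=x+\ii(1-\zeta)/(1+\zeta)$, so that $\tau^{-1}$ has its pole at $x-\ii\notin\overline\Pi$ and is manifestly analytic at $x$.
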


In \cite[p.457]{bgr} the authors give an elegant way of treating boundary-to-interior interpolation problems. Their technique could doubtless be adapted to the present context, but in keeping with the elementary approach of this paper we prefer to give a ``bare hands" proof.

\begin{proof} It is enough to find a function $\tilde{f}$ in the Pick class that is analytic at  $x$ and satisfies
\beq \label{interpcond_TaylorinPi}
\frac{\tilde{f}^{(k)}(x)}{k!} = a^k, \qquad k=0,1,\dots,n.
\eeq
The function $f(z) = \frac{a^{-1}}{z-x} + \tilde{f}(z)$ will then satisfy conditions (\ref {interpcond_inter_LinPi}).

With the aid of the Cayley transform we may reduce the construction of such a function 
 $\tilde{f}$ in the Pick class to 
the construction of   $\varphi \in \Schur$ as in the following Proposition.
\end{proof}

\begin{proposition}\label{Schur_a0=0}
Let  $\tau \in \T $ and let $b^0 \in \D$, $b^1,\dots,b^n \in \C$. There exists  a function $\varphi$ in the Schur class  such that 
$\varphi$ is analytic at $\tau$ and
\beq \label{interpcond_k>=0_S}
\frac{\varphi^{(k)}(\tau)}{k!} = b^k, \qquad k=0,1,\dots,n.
\eeq
\end{proposition}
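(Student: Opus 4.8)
The plan is to construct $\varphi$ explicitly as a \emph{polynomial} whose Taylor jet at $\tau$ agrees with the data and whose supremum over $\overline{\D}$ is strictly less than $1$. Being a polynomial, such a $\varphi$ is automatically entire, hence analytic at $\tau$, and it lies in $\Schur$ since $|\varphi|\le 1$ even on $\overline{\D}$. First I would write the candidate as
\[
\varphi(z) = b^0 + q(z)\,u(z), \qquad q(z) = \sum_{k=1}^n b^k (z-\tau)^k,
\]
so that $q$ carries the prescribed non-constant part of the jet and vanishes at $\tau$. The idea is to choose a correction factor $u$ with $u(\tau)=1$ and $u(z)-1 = O((z-\tau)^n)$ as $z\to\tau$, so that $q\,u = q + O((z-\tau)^{n+1})$ and hence $\varphi$ reproduces the coefficients $b^0,\dots,b^n$ exactly; simultaneously $u$ must be small in modulus away from $\tau$, so that the product $q\,u$ stays small and $|\varphi|$ does not exceed $1$.

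The concentration of $u$ near $\tau$ would come from a peaking function. Set $v(z) = \tfrac12(1 + \bar\tau z)$; then $v$ is affine with $v(\tau) = 1$, one has $|v|\le 1$ on $\overline{\D}$, and $|v(z)| < 1$ for every $z \in \overline{\D}\setminus\{\tau\}$, so on each region $\{z\in\overline{\D}: |z-\tau|\ge\rho\}$ the maximum $\sup|v| = 1-\eta(\rho)$ is strictly less than $1$. A high power $v^N$ is therefore uniformly tiny away from $\tau$. The trouble is that $v^N = 1 + N\tfrac{\bar\tau}{2}(z-\tau)+\cdots$ is emphatically \emph{not} flat at $\tau$; its derivative there grows with $N$. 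Reconciling decay away from $\tau$ with the required flatness at $\tau$ is the crux of the argument.

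The device that resolves this tension — the key step — is to take
\[
u(z) = 1 - \bigl(1 - v(z)^N\bigr)^n.
\]
Since $1 - v^N$ vanishes to first order at $\tau$, its $n$th power is $O((z-\tau)^n)$, which gives exactly the flatness $u - 1 = O((z-\tau)^n)$ needed to preserve the jet. Away from $\tau$, expanding $1-(1-v^N)^n$ shows $|u| \le C_n\,|v^N|$, which the power $N$ can make arbitrarily small. At the same time $|u|\le 1 + 2^n$ holds everywhere on $\overline{\D}$ independently of $N$, because $|v|\le 1$ there.

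Finally I would estimate $|\varphi|$ on two regions. On $|z-\tau|\le\rho$ I use $q(\tau)=0$ to get $|q|\le C\rho$, together with the $N$-independent bound on $|u|$; on $|z-\tau|\ge\rho$ I use $|q|\le \sup_{\overline{\D}}|q|$ together with $|u|\le C_n(1-\eta(\rho))^N$. Choosing first $\rho$ small and then $N$ large forces $|q\,u| \le \tfrac12(1-|b^0|)$ throughout $\overline{\D}$, whence $\sup_{\overline{\D}}|\varphi| \le |b^0| + \tfrac12(1-|b^0|) < 1$, using here that $b^0\in\D$. This produces $\varphi\in\Schur$, analytic at $\tau$, with the correct jet. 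The only genuinely delicate point is the construction of the flat-yet-concentrated factor $u$; once it is in hand, all the remaining estimates are routine.
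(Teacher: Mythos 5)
Your proof is correct, and its core device coincides with the paper's: your factor $u(z) = 1-\bigl(1-v(z)^N\bigr)^n$ with $v(z)=\tfrac12(1+\bar\tau z)=\frac{\tau+z}{2\tau}$ is exactly the function $h_N$ of Lemma~\ref{h_N} (with $m=n$ there), and your two-region estimate (uniform boundedness of $u$ plus smallness of the jet polynomial near $\tau$; the geometric decay $|u|\le C_n|v|^N$ away from $\tau$) mirrors the paper's split of $\T$ into the arcs $|\theta|\le\varepsilon$ and $|\theta|\ge\varepsilon$. What is genuinely different is the surrounding decomposition. The paper first normalizes to $b^0=0$ by a M\"obius map, factors out the first nonvanishing term, $\varphi = b^q(z-\tau)^q\psi$, prescribes the jet of the quotient $\psi$ by a polynomial $Q$, and verifies the bound on $\T$ only, the maximum principle supplying it on $\D$. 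You keep $b^0$, use the additive split $\varphi = b^0 + q\,u$ with $q$ the full jet polynomial, and pay for the error term out of the slack $1-|b^0|>0$. This buys several small simplifications: no M\"obius step (under which the target derivatives $b^1,\dots,b^n$ would in principle have to be recomputed, a point the paper passes over), no singling out of the first nonzero coefficient $b^q$, the degenerate case $b^1=\cdots=b^n=0$ handled automatically, and a polynomial solution with $\sup_{\overline{\D}}|\varphi|<1$ estimated directly on the closed disc. The one point that must be kept straight --- and you do keep it straight --- is that preserving the jet requires both $u-1=O((z-\tau)^n)$ and $q(\tau)=0$, which together give $qu - q = O((z-\tau)^{n+1})$; this pairing plays exactly the role of the paper's explicit factor $(z-\tau)^q$.
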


\begin{proof} By means of a M$\ddot{\rm o}$bius transformation we may reduce to the case that $b^0=0$. Let $q$ be the smallest index $k$, $ 1 \le k \le n$, such that $b^k \neq 0$ (if all $b^k = 0$ then of course $\varphi \equiv 0$ suffices). We will construct a function $\varphi(z)$ of the form
\beq \label{def_psi}
\varphi(z) = b^q (z-\tau)^q \psi(z),
\eeq
where  $\psi$ is analytic on $\D$. For such $\varphi $ and for all $j, q \le j \le n$,
\begin{eqnarray*}
\phi^{(j)}(z) &=&  b^q \sum_{m=0}^j {j\choose{m}}\frac{\partial^m}{\partial z^m}\left\{(z-\tau)^q \right\} \;\psi^{(j-m)}(z)\\
&=& b^q \frac{j!}{(j-q)!} \psi^{(j-q)}(z) + O(z-\tau)
\end{eqnarray*}

\noindent as $z \to \tau$. Thus, it will suffice to choose $\psi$ such that, for all $j, q \le j \le n$,
$$b^j= \frac{\phi^{(j)}(\tau)}{j!}= \frac{1}{j!} b^q \frac{j!}{(j-q)!}\psi^{(j-q)}(\tau).$$
Therefore we require $\psi$, analytic on $\D$, regular at the point $\tau$, such that
\beq \label{conditions_psi_1}
\psi^{(k)}(\tau) = \frac{b^{q+k} k!}{b^q} \qquad k=0,1,\dots,n-q,
\eeq
and, in view of equation (\ref{def_psi}) and the fact that we require $|\phi| \le 1$,
\beq \label{conditions_psi_2}
|\psi(z)| \le \frac{1}{|b^{q}||z-\tau|^q} \qquad \text{for all} \; z \in \T.
\eeq

\noindent Let
\beq \label{Q(z)}
Q(z) = d_0 + d_1 (z-\tau) + \frac{d_2}{2!} (z-\tau)^2 + \dots + \frac{d_{n-q}}{(n-q)!} (z-\tau)^{n-q}
\eeq
where $d_{k} = \frac{b^{q+k} k!}{b^q}$, $ k=0,1,\dots,n-q.$
For the next step we need a technical observation.
\begin{lemma}\label{h_N}
Let  $\tau \in \T $, let $m \in \N$ and, for $N \ge 1$, let
$$h_N(z)= 1-\left(1- \left(\frac{\tau +z}{2\tau}\right)^N \right)^m.$$
Then

{\rm (1)} $h_N(z)= 1 + O((z-\tau)^m )$ as $z \to \tau$ for any $N \ge 1$;

{\rm (2)} the functions $h_N(z),\; N=1,2,\dots,$ are uniformly bounded by $2^m +1$ on $\overline{\D}$;

{\rm (3)} $h_N(z) \to 0$ uniformly as $N \to \infty$ on  each compact subset of $\T \setminus \{ \tau \}$;

\end{lemma}

\begin{proof}
It is easy to see that $h_N(\tau)= 1$ and 
$h_N^{(p)}(\tau)= 0$ for all $1 \le p \le m-1$. Thus condition (1) is satisfied.

For all $z \in \overline{\D}$,
$$\left|\frac{\tau +z}{2\tau}\right|= \frac{|\tau +z|}{2} \le 1, \;\;\;\text{so} \;\;\; \left|\frac{\tau +z}{2\tau}\right|^N \le 1.$$
Hence, for all $N \ge 1$,
$$|h_N(z)|= \left|1-\left(1- \left(\frac{\tau +z}{2\tau}\right)^N \right)^m \right| \le 1 + \left|\left(1- \left(\frac{\tau +z}{2\tau}\right)^N \right)^m \right| \le 1+2^m .$$
For $z \in \T$ in any compact subset $K$ of $\T \setminus \{ \tau \}$, 
$$\left|\frac{\tau +z}{2\tau}\right|= \frac{|\tau +z|}{2} \le c < 1. $$
Therefore
$$h_N(z) = 1-\left(1- \left(\frac{\tau +z}{2\tau}\right)^N \right)^m\to 0$$ uniformly on $K$ as $N \to \infty$.
 \end{proof}

\noindent {\bf Conclusion of the proof of Proposition \ref{Schur_a0=0}.}~ Let
\beq \label{psi_N}
\psi_N (z) = Q(z)h_N(z),
\eeq
where $h_N, N=1,2, \dots,$ are defined in Lemma \ref{h_N} and are uniformly bounded on $\overline{\D}$ by $M= 2^m +1$. 
\noindent It is clear that condition (\ref{conditions_psi_1}), that is,
$$\psi^{(k)}(\tau) = d_k= \frac{b^{q+k} k!}{b^q} \qquad k=0,1,\dots,n-q,$$
 is satisfied for each function $\psi_N$.

Let us prove condition (\ref{conditions_psi_2}), that is, 
\beq 
|\psi(z)| \le \frac{1}{|b^{q}||z-\tau|^q} \qquad \text{for all} \; z \in \T.
\eeq
Choose $\varepsilon > 0$ such that 
$$|e^{i \varepsilon}\tau -\tau|^q < \frac{1}{|b^{q}| \| Q \|_{\infty} M}.$$
Then for all $\theta$ such that $|\theta| \le \varepsilon$ we have
$$ |\psi_N (e^{i\theta}\tau)| =| Q(e^{i\theta}\tau) h_N(e^{i\theta}\tau)| \le  \| Q \|_{\infty} M < \frac{1}{|b^{q}||e^{i\theta}\tau -\tau|^q}.$$
For all $\theta$ such that $|\theta| \ge \varepsilon$ and $ z=e^{i\theta}\tau$ we have
$$ |b^{q}(z -\tau)^q \psi_N (z)| = |b^{q}(z -\tau)^q Q(z)h_N(z)| \to 0$$
uniformly as $N \to \infty$.
Therefore,  condition (\ref{conditions_psi_2}) 
holds for $\psi = \psi_{N}$ for  sufficiently large $N$.

Hence, if we choose $\varphi(z) = b^q (z-\tau)^q \psi_{N}(z)$
for  sufficiently large $N$, we obtain $\varphi \in \Schur$ that satisfies (\ref{Schur_a0=0}).
 \end{proof}

\section{The boundary Carath\'{e}odory-Fej\'{e}r problem with positive definite Hankel matrix} \label{posit_definite}

In this section  we consider the special case of  the boundary Carath\'{e}odory-Fej\'{e}r problem in which $a^{-1}=0$ and all  $a^i$ are real.\\

\noindent {\bf Problem $\partial CF\Pick(\R)$} \quad {\em
Given a point $x\in \R$ and $a^0,a^1,\dots,a^n \in \R$, find a function $f$ in the Pick class such that $f$ is analytic at  $x$ and
\beq \label{interpcond_anal}
\frac{f^{(k)}(x)}{k!} = a^k, \qquad k=0,1,\dots,n.
\eeq
}

\begin{theorem} \label{H_m(a)>0_HD} Let $n$ be a  positive integer, let $a=(a^0,a^1,\dots,a^n)$ and let $m$ be the integer part of $\tfrac 12 (n+1)$.
Suppose the Hankel matrix  $H_m(a)>0$.  Then Problem $\partial CF\Pick(\R)$ is solvable and indeterminate. Moreover, the solution set of the problem contains a real rational function of degree equal to the rank of $H_m(a)$.
\end{theorem}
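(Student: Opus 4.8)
The plan is to induct on $m$, descending by Julia reduction to a trivial base problem and climbing back up by augmentation. Positivity will be transferred at each step by Corollary \ref{congruent}, which says $H_{k+1}(f)>0$ if and only if $f_1>0$ and $H_k(g)>0$ for the reduction $g$ of $f$; the matching of Taylor coefficients will be controlled by Lemma \ref{2augment}, and the degree count by the Remark following Definition \ref{defreduce}. Throughout I would write $f_k=a^k$, so that $H_m(f)=H_m(a)$, and note that $H_m(a)>0$ forces its $(1,1)$ entry $a^1>0$.

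For the base case $m=1$ we have $a^1>0$. If $n=1$, then $f(z)=a^0+a^1(z-x)$, which is the augmentation of $g\equiv 0$ at $x$ by $a^0,a^1$, lies in $\Pick$, is analytic at $x$, realises $a^0,a^1$, and is real rational of degree $1=\rank H_1(a)$; augmenting $g(z)=z-x$ instead gives a second, distinct solution, so the problem is indeterminate. If $n=2$, I would set $g_0=a^2/(a^1)^2$ and let $f$ be the augmentation at $x$ by $a^0,a^1$ of the real constant $g\equiv g_0$; by Theorem \ref{propfg}(2) and a short expansion $f\in\Pick$ is analytic at $x$, realises $a^0,a^1,a^2$, and is real rational of degree $1=\rank H_1(a)$, while augmenting $g_0+(z-x)$ yields a distinct solution.

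For the inductive step ($m\ge 2$) I would first solve the triangular Toeplitz system (\ref{fandgbis}) of Proposition \ref{taylorcoeffs} for $g_0,\dots,g_{n-2}$, the Taylor coefficients at $x$ of the reduction $G$ of the target polynomial $F(z)=\sum_{j=0}^{n}a^j(z-x)^j$. Since $a^1>0$, Corollary \ref{congruent} gives $H_{m-1}(g)>0$, so the reduced data $(g_0,\dots,g_{n-2})$ define a Problem $\partial CF\Pick(\R)$ with parameters $n'=n-2$ and $m'=m-1$ (the parity of $n$, hence the relation $m=\lfloor\tfrac12(n+1)\rfloor$, being preserved) to which the inductive hypothesis applies: it has a real rational solution $\tilde g$ of degree $m-1=\rank H_{m-1}(g)$, and it is indeterminate. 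I would then let $\tilde f$ be the augmentation of $\tilde g$ at $x$ by $a^0,a^1$. Theorem \ref{propfg}(2) gives $\tilde f\in\Pick$ analytic at $x$; the Remark after Definition \ref{defreduce} gives that $\tilde f$ is real rational of degree $m=\rank H_m(a)$; and, since $\tilde g-G=O((z-x)^{n-1})$ while $G$ augments back to $F$, Lemma \ref{2augment} yields $\tilde f-F=O((z-x)^{n+1})$, so $\tilde f$ realises $a^0,\dots,a^n$. Indeterminacy propagates because augmentation by fixed $a^0,a^1$ is injective (being inverted by reduction), so two distinct reduced solutions augment to two distinct solutions of the original problem.

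The main obstacle I anticipate is the bookkeeping that fuses these ingredients: verifying that the reduced data really are the coefficients of the reduction of $F$, so that the augmented function matches all of $a^2,\dots,a^n$ and not merely $a^0,a^1$, and confirming that the hypothesis $H_{m'}(g)>0$ with $m'=\lfloor\tfrac12(n'+1)\rfloor$ survives $n\mapsto n-2$ in both parities. The two analytic engines, positivity transfer and coefficient matching, are each a cited identity, so the genuine work lies in organising the induction and tracking the degree accumulated through the successive augmentations; the rank clause then follows at once, since $H_m(a)>0$ has rank $m$, which equals the degree built up over the reductions.
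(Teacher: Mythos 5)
Your proposal is correct and follows essentially the same route as the paper's proof: induction on $m$ with base cases $n=1,2$, Julia reduction of the Taylor polynomial combined with Corollary \ref{congruent} to transfer positive definiteness to the reduced data, the inductive hypothesis on the reduced problem, and augmentation together with Lemma \ref{2augment} to match all $n+1$ coefficients, with the degree and indeterminacy clauses handled exactly as in the paper (via the Remark after Definition \ref{defreduce} and injectivity of augmentation, respectively). The only cosmetic difference is that you descend from $m$ to $m-1$ where the paper ascends from $m$ to $m+1$.
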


\begin{proof}  In the case $n=1$ we have $m =1$ and $H_m(a)=[a^1]>0$.
The real rational function $f$ of degree 1 
\[
f(z) = a^0 +  \frac{a^1(z-x)}{1-c(z-x)}
\]
 is a solution of Problem $\partial CF\Pick(\R)$ for every $c \in\R$, and so the problem  is solvable and indeterminate, and its solution set contains a real rational function of degree equal to $\mathrm{rank}~H_1(a)=1$. 

Consider the case $n=2$.  Again $m=1$ and $H_1(a)=[a^1]>0$. We want to prove that  there exists $f\in\Pick$ such that $f(x)=a^0,\, f'(x)=a^1,\,  \tfrac 12 f''(x)=a^2$. If $f$ is a function with the desired properties then the reduction of $f$ at $x$ takes the value $a^2/(a^1)^2$ at $x$.  Accordingly let $g$ be any function in $\Pick$ that is analytic at $x$ and satisfies $g(x)=a^2/(a^1)^2$, e.g.
\[
g(z) = \frac{a^2}{(a^1)^2}+ c(z-x), \;\;c \geq 0. 
\]
Let $f$ be the augmentation of $g$ at $x$ by $a^0,\, a^1$.  Then $f$ is an interpolating function as required.  In particular, if $c$ is chosen to be zero then we obtain the interpolating function 
\[
f(z)= a^0 + \frac{(a^1)^2(z-x)}{a^1-a^2(z-x)}.
\]
The function  $f$ is a real rational function of degree 1, and $\mathrm{rank}~H_1(a)=1$.
Thus the assertions of the theorem hold when $n=1$ or $2$, that is, when $m=1$.

 Suppose the statement of the theorem holds for some  $m \geq 1$; we shall prove it holds  for $m+1$, in which case $n=2m+1$ or $2m+2$. 
Let  $H_{m+1}(a) > 0$; then $a^1 > 0$. 
Let $\sum_0^\infty g_j z^j$ be the reduction of $\sum_0^n a^j z^j$ at $0$.  
By Corollary \ref{congruent}, $H_m(g_0,g_1,\dots,g_{2m-1})$ is congruent to  $\schur H_{m+1}(a)$ and therefore is positive definite.  Now $n-2$ is either $2m-1$ or $2m$, and so,
 by the inductive hypothesis, there exist at least two functions $g \in \Pick$ such that 
\[
\frac{g^{(k)}(x)}{k!} = g_k, \qquad k=0,1,\dots, n-2,
\]
one of these functions being a real rational function of degree equal to the rank of $H_{m}(g_0,g_1,$ $\dots,g_{2m-1})$. For both $g$,
\[ 
g(z) - \sum_0^\infty g_j (z-x)^j = O((z-x)^{n-1}).
\]
Let $f$ be the augmentation of $g$ at $x$ by $a^0,a^1$; then $f\in \Pick$ and $f$ is analytic at $x$.
Note that $\sum_0^{n} a^j (z-x)^j$ is the augmentation of $\sum_0^\infty g_j (z-x)^j $ at $x$ by $a^0,a^1$.  Hence, by Lemma \ref{2augment},
\[
 f(z) - \sum_0^{n} a^j (z-x)^j = O((z-x)^{n+1}),
\]
and so
\[
f^{(k)}(x)/k! = a^k \mbox{  for } k=0,1,\dots,n.
\]
We obtain two solutions of 
Problem $\partial CF\Pick(\R)$, one of them a real rational function of degree equal to 
$$
{\rm degree} (g) +1 = \mathrm{rank}~H_m(g_0,g_1,\dots,g_{2m-1}) +1=\mathrm{rank}~H_{m+1}(a).
$$
By induction the statement of the theorem holds for all $m\geq 1$.
 \end{proof}

\section{A relaxation of the boundary Carath\'{e}odory-Fej\'{e}r problem} \label{relax_probl}

It is not far from the truth that Problem $\partial CF\Pick(\R)$ (for $n=2m-1$) is solvable if and only if its associated Hankel matrix $H_m(a)$ is positive. However, consider the case that $a^1=0$. Then 
$H_m(a) \ge 0$ if and only if 
$a^2 = a^3 = \dots = a^{n-1} = 0$ and $a^{n} \ge 0$, whereas 
Problem $\partial CF\Pick(\R)$ is solvable if and only if 
$a^2 = a^3 = \dots = a^{n} = 0$. It turns out that positivity of $H_m(a)$ characterises solvability of a relaxed version of the problem:\\

\noindent {\bf Problem $\partial CF\Pick'(\R)$} \quad {\em
Given a point $x\in \R$ and $a^0,a^1,\dots,a^n \in \R$, find a function $f$ in the Pick class such that $f$ is analytic at  $x$,
\beq \label{interpcondRel}
\frac{f^{(k)}(x)}{k!} = a^k, \qquad k=0,1,\dots,n-1, \;\;{\text and } \;\;  \frac{f^{(n)}(x)}{n!} \le a^n.
\eeq
}

\begin{theorem} \label{probprimeHD} 
Let $n$ be an odd positive integer, $a=(a^0,\dots,a^n)\in \R^{n+1},\ x\in\R$.

{\rm (1)} Problem $\partial CF\Pick'(\R)$ is solvable if and only if the associated Hankel matrix $H_m(a)$ is positive, where $m = \tfrac 12 (n+1)$.

{\rm (2)} The solution set of Problem $\partial CF\Pick'(\R)$, if nonempty, contains a real rational function of degree no greater than the rank of the associated Hankel matrix.

{\rm (3)} The problem is determinate if and only if the associated Hankel matrix is positive and singular.
\end{theorem}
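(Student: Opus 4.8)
I would prove the three assertions together by induction on $m$, using Julia reduction and augmentation exactly as in the proof of Theorem \ref{H_m(a)>0_HD}, but now keeping track of the semidefinite (boundary) case and of the single relaxed inequality. For the base case $m=1$ (so $n=1$) the matrix is $H_1(a)=[a^1]$ and the relaxed problem asks for $f\in\Pick$, analytic at $x$, with $f(x)=a^0$ and $f'(x)\le a^1$. Since a non-constant Pick function analytic and real-valued at $x$ has $f'(x)>0$ (Proposition \ref{posderiv}(1)), while the constant $f\equiv a^0$ has $f'(x)=0$, solvability is equivalent to $a^1\ge0$, i.e. to $H_1(a)\ge0$. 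When $a^1=0$ the constant is the only solution (giving determinacy and a degree-$0$ function) and $H_1(a)$ is positive and singular; when $a^1>0$ both the constant and a degree-$1$ function solve the problem, so it is indeterminate and $H_1(a)$ is positive definite. Thus all three parts hold for $m=1$.

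\textbf{Inductive step.} Assume the theorem for $m$ and take $a=(a^0,\dots,a^{2m+1})$, so $n=2m+1$ and the matrix is $H_{m+1}(a)$. If $a^1<0$ then $H_{m+1}(a)\not\ge0$ and the equality $f'(x)=a^1<0$ is impossible, so (1)--(3) hold vacuously. The substantive case is $a^1>0$. Let $g^*=(g_0,\dots,g_{2m-1})$ be the reduction at $0$ of $\sum_0^{2m+1}a^jz^j$; its entries are real (Proposition \ref{taylorcoeffs}), and by Corollaries \ref{schur_compl} and \ref{congruent}, $H_m(g^*)$ is congruent to $\schur H_{m+1}(a)$. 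Since $a^1>0$, the Schur-complement criterion for positivity gives $H_{m+1}(a)\ge0\iff H_m(g^*)\ge0$, together with $\rank H_{m+1}(a)=\rank H_m(g^*)+1$ and $\det H_{m+1}(a)=(a^1)^{2m+1}\det H_m(g^*)$.

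The crux is that reduction and augmentation give a bijection between the solution set of the relaxed problem for $a$ and that for $g^*$. A solution $f$ for $a$ has $f'(x)=a^1>0$, so its reduction $g$ lies in $\Pick$ and is analytic at $x$ (Theorem \ref{propfg}(1)); because $f$ agrees with $F\df\sum_0^{2m+1}a^j(z-x)^j$ to order $2m$, the identity (\ref{fF}) read backwards gives $g_k=(g^*)_k$ for $k=0,\dots,2m-2$, while Lemma \ref{2augment_relax} gives $f^{(2m+1)}(x)/(2m+1)!-a^{2m+1}=(a^1)^2\bigl(g^{(2m-1)}(x)/(2m-1)!-(g^*)_{2m-1}\bigr)$. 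As $(a^1)^2>0$, the relaxed inequality for $f$ holds iff it holds for $g$; conversely augmentation by $a^0,a^1$ (Theorem \ref{propfg}(2)) sends a solution for $g^*$ to one for $a$, and the two maps are mutually inverse. Applying the inductive hypothesis to $g^*$ (a problem with $n'=2m-1$) now transfers each part: solvability of the two problems is equivalent, proving (1) via the positivity equivalence above; a real rational solution for $g^*$ of degree $\le\rank H_m(g^*)$ augments, by the Remark following Definition \ref{defreduce}, to a real rational solution for $a$ of degree $\le\rank H_m(g^*)+1=\rank H_{m+1}(a)$, proving (2); and the bijection makes the problem for $a$ determinate iff it is for $g^*$, i.e. iff $H_m(g^*)$ is positive and singular, i.e. iff $H_{m+1}(a)$ is positive and singular, proving (3).

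\textbf{Degenerate case and main obstacle.} The case $a^1=0$ (with $m+1\ge2$) must be handled directly: here $H_{m+1}(a)\ge0$ forces, by the structure of a positive Hankel matrix with vanishing $(1,1)$ entry, that $a^1=\dots=a^{n-1}=0$ and $a^n\ge0$, so the relaxed problem requires $f'(x)=0$, hence $f\equiv a^0$ by Proposition \ref{posderiv}(1); this constant meets the final inequality $0\le a^n$ exactly when $a^n\ge0$. Thus solvability is again equivalent to $H_{m+1}(a)\ge0$, the unique solution has degree $0$, and the (singular) matrix $H_{m+1}(a)$ is positive and singular, so the problem is determinate, consistent with (1)--(3). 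The one point demanding care is precisely the transfer of the single relaxed inequality through one Julia step: one must check via Lemma \ref{2augment_relax} that its sense is preserved (through the positive factor $(a^1)^2$), and that at the boundary $a^1=0$ the induction collapses correctly to the forced constant solution.
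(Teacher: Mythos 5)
Your proposal is correct and follows essentially the same route as the paper: induction on $m$ using Julia reduction and augmentation, with Lemma \ref{2augment_relax} transferring the relaxed inequality (through the positive factor $(a^1)^2$) and Corollary \ref{congruent} transferring positivity via the Schur complement, with the cases $a^1=0$ and $m=1$ handled separately at the outset. The only difference is organizational: you package the inductive step as a single bijection between the solution sets for $a$ and for the reduced data $g^*$, whereas the paper establishes necessity, sufficiency and determinacy by three separate (but equivalent) applications of the same reduction/augmentation step, invoking Theorem \ref{H_m(a)>0_HD} for the positive definite subcase that your unified argument absorbs.
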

\begin{proof}
First consider the case that $a^1=0$.  By Proposition \ref{posderiv}, any $f\in\Pick$ for which $f'(x)=0$ is a constant, and so Problem $\partial CF\Pick'(\R)$ is solvable if and only if $a^2=\dots=a^{n-1} =0$ and $a_n \geq 0$.  As we observed above, these are also the conditions that $H_m(a) \geq 0$, and so (1) is true when $a^1=0$.  It is easily seen that (2) and (3) also hold in this case.  We therefore suppose henceforth that $a^1 >0$.

When $m =1$ we have $H_m(a)=[a^1]>0$ and Poblem $\partial CF\Pick'(\R)$ has infinitely many solutions, to wit
\beq\label{many}
f(z) = a^0 + c(z-x) \quad \mbox{ where }0 < c \leq a^1.
\eeq
Accordingly statement (1) of the theorem holds when $m=1$.  It is clear from equation (\ref{many}) that (2) and (3) also hold in this case.

 Suppose sufficiency holds in statement (1) of the theorem for some $m \geq 1$; we shall prove it holds for $m +1$.

 Suppose $H_{m+1}(a)\ge 0$.
If  $H_{m+1}(a) > 0$
then by Theorem \ref{H_m(a)>0_HD},   Problem $\partial CF\Pick(\R)$ is solvable.  {\em A fortiori}, so is  Problem $\partial CF\Pick'(\R)$.  We may therefore suppose that $H_{m+1}(a)$ is positive and singular.

Let $G(z)=\sum_0^\infty g_j (z-x)^j$ be the reduction of $F(z)=\sum_0^{2m+1} a^j (z-x)^j$ at $x$. By Corollary \ref{congruent},   $H_m(g_0,g_1,\dots,g_{2m-1})$ is congruent to  $\schur H_{m+1}(a)$. Since $H_{m+1}(a)$ is positive and singular, so is  $H_m(g_0,g_1,\dots,g_{2m-1})$.
By the inductive hypothesis there exists a function $g \in \Pick$ 
such that 
\beq\label{propg}
\frac{g^{(k)}(x)}{k!} = g_k, \qquad k=0,1,\dots, n-1, \; \; \text{and} \;\; \;
\frac{g^{(n)}(x)}{n!} \le g_{2m-1}.
\eeq
Hence
\begin{align} \label{g-G}
g(z)-G(z) &= g(z) - \sum_0^\infty g_j (z-x)^j \nn \\ &= \left(\frac{g^{(2m-1)}(x)}{(2m-1)!} - g_{2m-1}\right)(z-x)^{2m-1} + O((z-x)^{2m}).
\end{align}
 $F$ is the augmentation of $G$ at $x$ by $a^0,a^1$;
let $f$ be the augmentation of $g$ at $x$ by $a^0,a^1$.
Thus $f\in \Pick$, $f$ is analytic at $x$ and, by Lemma \ref{2augment},
\beq \label{f-F_1}
 f(z)-F(z) = f(z) - \sum_0^{2m+1} a^j (z-x)^j = O((z-x)^{2m+1}).
\eeq 
Therefore
$f^{(k)}(x)/k! = a^k$ for $k=0,1,\dots,2m$.
By Lemma \ref{2augment_relax}, we have
\beq \label{f_g_relax_2}
\frac{f^{(2m+1)}(x)}{(2m+1)!} - a^{2m+1} = \left(\frac{g^{(2m-1)}(x)}{(2m-1)!} - g_{2m-1}\right) (a^1)^2.
\eeq
Since $\frac{g^{(2m-1)}(x)}{(2m-1)!} \le g_{2m-1}$, we have
$f^{(2m+1)}(x)/(2m+1)! \le a^{2m+1}$. 
Thus $f$ is a solution of $\partial CF\Pick'(\R)$.  By induction, sufficiency holds in statement (1).

Now suppose that necessity holds in (1) for some $m\geq 1$. Suppose a problem $\partial CF\Pick'(\R)$ has a solution $f\in \Pick$ for some $x\in \R$ and $a=(a^0, \dots, a^{2m+1})$.
Let $g$ be the reduction of $f$ at $x$. Then $g\in \Pick$, and so, by the inductive 
 hypothesis, $H_m(g_0,g_1,\dots,g_{2m-1}) \ge 0$ where $g_k = g^{(k)}(x)/k!$~ for $k=0,1, \dots, 2m-1$. By Corollary \ref{congruent}, 
 $H_m(g_0,g_1,\dots,g_{2m-1})$ is congruent to $\schur H_{m+1}(f)$ where
\[
 H_{m+1}(f) = \left[ \frac{f^{(i+j-1)}(x)}{(i+j-1)!}\right]_{i,j=1}^{m+1}.
\]
Thus $H_{m+1}(f) \ge 0$. Since $f$ is a solution of Problem $\partial CF\Pick'(\R)$,
\[
H_{m+1}(a) = H_{m+1}(f) +  \diag \{0,\dots, 0, a^{2m+1} - f^{(2m+1)}(x)/(2m+1)! \}  \ge 0. 
\]
Thus, by induction, necessity also holds in statement (1) of the theorem, and so (1) is proved.

We know that (2) holds when $m=1$; suppose it holds for some $m\geq 1$ and consider a solvable problem  $\partial CF\Pick'(\R)$ with $a=(a^0, \dots,a^{2m+1}$.  By (1), $H_{m+1}(a) \ge 0$.  Once again let $G(z)=\sum_0^\infty g_j (z-x)^j$ be the reduction of $F(z)=\sum_0^{2m+1} a^j (z-x)^j$ at $x$. Again by Corollary \ref{congruent}, $H_m(g_0,g_1,\dots,g_{2m-1})$  is positive.  By the inductive hypothesis
there is a real  rational function $g$ of degree no greater than  $\rank H_m(a)$ that satisifes the relations (\ref{propg}).   Let $f$ be the augmentation of $g$ at $x$ by $a^0,a^1$; then $f$ is a real rational function of degree equal to 
$$ 
1 +\;\;{\rm degree}(g) \le 1 +\;\;\rank H_m(g_0,g_1,\dots,g_{2m-1}) = \rank H_{m+1}(a).
$$
Exactly as in the proof of necessity in (1), $f$ is a solution of Problem  $\partial CF\Pick'(\R)$.
Thus, by induction, (2) holds for all $m$.

Necessity holds in statement (3) by Theorem \ref{H_m(a)>0_HD}: if $H_m(a) >0$ then  Problem $\partial CF\Pick(\R)$ is indeterminate, and so {\em a fortiori} $\partial CF\Pick'(\R)$ is indeterminate.

We know that sufficiency holds in statement (3) when $m=1$.  Suppose it  holds for some $m \geq 1$; we shall prove it holds for $m +1$. 

Let $H_{m+1}(a)$ be positive and singular for some $a=(a^0,\dots,a^{2m+1})$.
Assume that functions
$f_1$ and $f_2$ in $\Pick$ are solutions of the problem $\partial CF\Pick'(\R)$.
Let $G(z)=\sum_0^\infty g_j (z-x)^j$ be the reduction of $F(z)=\sum_0^{2m+1} a^j (z-x)^j$ at $x$. By Corollary \ref{congruent},   $H_m(g_0,g_1,\dots,g_{2m-1})$ is congruent to  $\schur H_{m+1}(a)$. Since $H_{m+1}(a)$ is positive and singular, so is  $H_m(g_0,g_1,\dots,g_{2m-1})$.
Therefore, by Lemma \ref{2augment_relax},  the reductions $g^1$ and $g^2$ at $x$ of $f_1,\ f_2$ respectively are  solutions of the problem $\partial CF\Pick'(\R)$ for the data $x\in \R$ and  $g_0,g_1,\dots,g_{2m-1} \in \R$.
By the inductive hypothesis the solution of this problem is unique, and so $g^1 = g^2$. Since $f_1, \ f_2$ are both equal to the augmentation of this function at $x$ by  $a^0$, $a^1$ we have $f_1=f_2$.
Thus, by induction, for any $m\geq 1$, if $H_m(a)$ is positive and singular then the problem $\partial CF\Pick'(\R)$ is determinate. 
 \end{proof}

The idea of proving a solvability result for a boundary interpolation problem by first solving a relaxed problem has been used by several authors in the context of the boundary Nevanlinna-Pick problem; for example, D. Sarason \cite{S}.

\section{A criterion for the boundary Carath\'{e}odory-Fej\'{e}r problem} \label{atapoint}

In this section  we prove Theorem \ref{main_theorem}, that is, we give  a solvability criterion for  Problem $\partial CF\Pick$:  {\em
Given a point $x\in \R$ and $a^{-1},a^0,a^1,\dots,a^n \in \C$, where $n$ is a non-negative integer, to find a function $f$ in the Pick class such that $f$ is analytic in a deleted neighbourhood of $x$ and
\beq \label{interpcond}
L_k(f,x) = a^k, \qquad k=-1,0,1,\dots,n.
\eeq
}

To start with we consider the case where all terms of the sequence are real and  deduce a solvability theorem for Problem $\partial CF\Pick(\R)$ from Theorem \ref{probprimeHD}.
Things are somewhat diffrent for even and odd $n$; we start with odd $n$.

\begin{theorem} \label{sarasonHD_odd} Let $m$ be a positive integer and $a=(a^0,\dots,a^{2m-1})\in\R, \ x\in\R$. Then

{\rm (1)} Problem $\partial CF\Pick(\R)$ 
is solvable if and only if the associated Hankel matrix $H_{m}(a)$ is either positive definite or SE-minimally positive.  

{\rm (2)} The solution set of the problem, if non-empty, contains a real rational function of degree equal to the rank of $H_m(a)$.

{\rm (3)} The problem is determinate if and only if $H_m(a)$ is SE-minimally positive.
\end{theorem}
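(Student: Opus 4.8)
The plan is to induct on $m$, reducing the $m$-dimensional problem to the $(m-1)$-dimensional one by the Julia reduction of Definition \ref{defreduce}, exactly as in the proofs of Theorems \ref{H_m(a)>0_HD} and \ref{probprimeHD}. For the base case $m=1$ the data is $(a^0,a^1)$ and $H_1(a)=[a^1]$; by Proposition \ref{posderiv} a Pick function analytic at $x$ with $f(x)=a^0$ and $f'(x)=a^1$ exists iff $a^1>0$, in which case $f(z)=a^0+a^1(z-x)$ is a degree-$1$ rational solution and the problem is indeterminate, or $a^1=0$, in which case the constant $f\equiv a^0$ is the unique solution. Since $[a^1]>0$ means $a^1>0$ and $[a^1]$ is SE-minimally positive means $a^1=0$, all three statements hold when $m=1$.

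I would first dispose of the degenerate case $a^1=0$ directly. By the observations opening Section \ref{relax_probl}, $a^1=0$ together with $H_m(a)\ge 0$ forces $a^2=\dots=a^{2m-2}=0$, and SE-minimal positivity then forces $a^{2m-1}=0$ as well, so $H_m(a)=0$; here the constant $f\equiv a^0$ is the unique solution, it has degree $0=\rank H_m(a)$, and the problem is determinate, in agreement with all three parts. If $a^1=0$ while some $a^j$ with $2\le j\le 2m-1$ is nonzero, then $f'(x)=0$ forces $f$ constant by Proposition \ref{posderiv}, so no solution exists; correspondingly $H_m(a)$ is then neither positive definite nor SE-minimally positive.

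Assume now $a^1>0$. Any solution $f$ is non-constant, analytic and real-valued at $x$ with $f'(x)=a^1>0$, so its reduction $g$ at $x$ lies in $\Pick$ and is analytic at $x$ by Theorem \ref{propfg}(1); by Proposition \ref{taylorcoeffs} its coefficients $g_0,\dots,g_{2m-3}$ are determined by $a^0,\dots,a^{2m-1}$, so $g$ solves the $(m-1)$-dimensional problem for the data $(g_0,\dots,g_{2m-3})$. Conversely, augmenting any exact solution of the $g$-data problem by $a^0,a^1$ yields, via Theorem \ref{propfg}(2) and Lemma \ref{2augment}, an exact solution of the original problem of degree one greater; since reduction and augmentation are mutually inverse (Note (3) following Theorem \ref{propfg}), this is a bijection of solution sets. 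By Corollary \ref{congruent}, $H_{m-1}(g)$ is congruent to the Schur complement $\schur H_m(a)$, whence $H_m(a)>0\Leftrightarrow H_{m-1}(g)>0$, $\rank H_m(a)=\rank H_{m-1}(g)+1$, and $H_m(a)$ is singular iff $H_{m-1}(g)$ is. Feeding these facts and the bijection into the inductive hypothesis delivers solvability (part (1), using Theorem \ref{H_m(a)>0_HD} in the positive definite case), the degree count (part (2), since $\deg f=1+\deg g$), and determinacy (part (3)) for $m$.

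The step I expect to be the main obstacle is showing that SE-minimal positivity transfers: $H_m(a)$ is SE-minimally positive iff $H_{m-1}(g)$ is. This is not a formal consequence of congruence, since SE-minimal positivity singles out the southeast direction, so I would argue in two moves. First, the entry $a^{2m-1}$ occurs only in the lower-right block, so replacing it by $a^{2m-1}-\varepsilon$ alters $H_m(a)$ by $-\varepsilon E_{mm}$ and alters its Schur complement by exactly $-\varepsilon E_{m-1,m-1}$; since $a^1>0$ gives $H_m(a)-\varepsilon E_{mm}\ge 0\Leftrightarrow \schur H_m(a)-\varepsilon E_{m-1,m-1}\ge 0$, SE-minimal positivity of $H_m(a)$ is equivalent to that of $\schur H_m(a)$. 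Second, the congruence in Corollary \ref{schur_compl} is effected by the lower-triangular Toeplitz matrix $T_{f,m-1}$, whose last column is $f_1e_{m-1}$, so $T_{f,m-1}E_{m-1,m-1}T_{f,m-1}^T=f_1^2E_{m-1,m-1}$; hence perturbing the southeast corner of $H_{m-1}(g)$ corresponds under the congruence to perturbing the southeast corner of $\schur H_m(a)$, up to the positive factor $f_1^2=(a^1)^2$, which transfers SE-minimal positivity. As a cross-check on the sufficiency half of (1) in the SE-minimally positive case, one can instead invoke the relaxed Theorem \ref{probprimeHD}: it produces an $f$ matching $a^0,\dots,a^{2m-2}$ with $2m-1$st coefficient $d\le a^{2m-1}$, while $H_m(a^0,\dots,a^{2m-2},d)\ge 0$ forces $d\ge a^{2m-1}$, pinning $d=a^{2m-1}$ and giving an exact solution.
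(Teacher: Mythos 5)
Your proposal is correct, but it takes a genuinely different route from the paper's. The paper does not run a fresh induction for this theorem: it deduces it from the relaxed problem $\partial CF\Pick'(\R)$ (Theorem \ref{probprimeHD}) together with Theorem \ref{H_m(a)>0_HD}. For necessity, any solution solves the relaxed problem, so $H_m(a)\geq 0$; if $H_m(a)$ is singular but not SE-minimally positive, the paper perturbs the $(m,m)$ entry downward and uses part (3) of Theorem \ref{probprimeHD} to manufacture a second solution of the relaxed problem, contradicting its determinacy. Sufficiency in the SE-minimal case is exactly your ``cross-check'': the relaxed solution's top coefficient is pinned to $a^{2m-1}$, since otherwise $H_m(a)$ would majorise a nonzero matrix $\diag\{0,\dots,0,\varepsilon\}$. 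The exact degree in part (2) is then extracted via Kronecker's theorem. Your argument instead works directly on the unrelaxed problem by induction, using the reduction/augmentation bijection of solution sets; its key new ingredient --- that SE-minimal positivity transfers to the reduced data because the corner perturbation $-\varepsilon E_{mm}$ passes unchanged into the Schur complement and is carried to $(a^1)^2 E_{m-1,m-1}$ by the lower-triangular Toeplitz congruence of Corollary \ref{schur_compl} --- is correct, and it is precisely the lemma the paper's relaxation strategy is designed to avoid. What each approach buys: yours is self-contained at the level of this theorem, gets determinacy for free from the bijection, and obtains the exact degree in (2) by pure counting (augmentation raises degree and rank by one each), with no appeal to Kronecker; the paper's reuses the already-proven relaxation theorem and never has to track SE-minimality through reduction. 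One trivial omission in yours: you treat $a^1=0$ and $a^1>0$ but not $a^1<0$, where both sides of (1) fail vacuously, since any solution satisfies $f'(x)\geq 0$ by Proposition \ref{posderiv} while $H_m(a)$ has a negative $(1,1)$ entry and so is neither positive definite nor SE-minimally positive.
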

 
\begin{remark} \rm  A SE-minimally positive Hankel matrix $H$ is  automatically singular. For if $H >0$ then all the leading principal minors of $H$ are positive, and so they remain positive for sufficiently small perturbations of the SE-corner entry of $H$. 
\end{remark}
\begin{proof} 
{\em Necessity} of (1).  Suppose that Problem $\partial CF\Pick(\R)$ has a solution $f\in\Pick$ but that its  Hankel matrix $H_m(a)$ is neither positive definite nor SE-minimally positive.  {\em A fortiori} $f$ solves Problem $\partial CF\Pick'(\R)$, and so, by Theorem \ref{probprimeHD}, $H_m(a)\geq 0$.  Since $H_m(a)$ is not positive definite, $H_m(a)$ is singular, and so Problem $\partial CF\Pick'(\R)$ has the {\em unique} solution $f$.  Since $H_m(a)$ is not SE-minimally positive there is some positive $a^{n}{'} < a^{n}$ such that $H_m(f)\geq 0$, where $H_m(f)$ is the matrix obtained when the $(m,m)$ entry $a^{n}$, $n=2m-1$, of $H_m(a)$ is replaced by $a^{n}{'}$.  Again by Theorem \ref{probprimeHD}, there exists $h\in\Pick$  such that $h$ is analytic at $x$, $h^{(k)}(x)/k! = a^k$ for $k=0,1,\dots,n-1$ and $h^{(n)}(x)/n! \le a^{n}{'}< a^{n}$.
 In view of the last relation  we have $h\neq f$, while clearly $h$ is a solution of Problem $\partial CF\Pick'(\R)$, as is $f$.  This contradicts the uniqueness of the solution $f$.  Hence if the problem is solvable then either $H_m(a)>0$ or $H_m(a)$ is SE-minimally positive.

{\em Sufficiency} of (1). In the case $H_m(a)>0$,
by Proposition \ref{H_m(a)>0_HD},  the problem $\partial CF\Pick(\R)$ is solvable. Suppose that $H_m(a)$ is SE-minimally positive.  By Theorem \ref{probprimeHD} there is an $f\in\Pick$ such that $f^{(k)}(x)/k! = a^k$ for $k=0,1,\dots,n-1$ and $f^{(n)}(x)/n! \le a^{n}$.
If in fact $f^{(n)}(x)/n! < a^{n}$, then consider the matrix $H_m(f)$ obtained when the $(m,m)$ entry $a^n$ of $H_m(a)$ is replaced by $f^{(n)}(x)/n!$.  Since $H_m(f)$ is the Hankel matrix of a problem $\partial CF\Pick'(\R)$ that is solvable (by $f$), we have $H_m(f)\geq 0$ by Theorem \ref{probprimeHD}, and so $H_m(a)$ majorises the non-zero positive diagonal matrix $\mathrm{diag}~\{0,\dots, a^{n}- f^{(n)}(x)/n!\}$, contrary to hypothesis.  Thus $f^{(n)}(x)/n! = a^{n}$, that is, $f$ is a solution of Problem $\partial CF\Pick(\R)$.  We have proved (1). Moreover, since $H_m(a)$ is singular, $f$ is the unique solution of Problem $\partial CF\Pick'(\R)$, hence is real rational of degree at most $\rank H_m(a)$.

 (2) If  $\partial CF\Pick(\R)$ has a solution then 
the associated Hankel matrix $H_{m}(a)$, $n=2m -1$, is positive definite or SE-minimally positive.  If $H_m(a) > 0$, by  Theorem \ref{H_m(a)>0_HD}, there is a real  rational solution of degree  $\rank H_m(a)$. If $H_m(a)$ is SE-minimally positive, by Theorem \ref{probprimeHD}, $\partial CF\Pick'(\R)$ is determinate and the solution $f$ of $\partial CF\Pick'(\R)$ is a real  rational function of degree  at most $\rank H_m(a)$. As is shown above, $f$ is also the solution of $\partial CF\Pick(\R)$. 
By Kronecker's theorem \cite[Theorem 3.1]{peller}, the real rational function $f$ has degree equal to $\rank H_m(a)$.
Thus (2) holds for all $m$.

 Theorem \ref{H_m(a)>0_HD} proves the necessity of statement (3): $H_m(a) >0$ implies that  Problem $\partial CF\Pick(\R)$ is indeterminate. As in statement (2),  by Theorem \ref{probprimeHD}, if $H_m(a)$ is SE-minimally positive  then Problem $\partial CF\Pick(\R)$ is determinate. 
 \end{proof}

We now turn to the solvability question for even $n$.
\begin{lemma} \label{H_SEmin_posit_rank} Let $H_m(a)$ be the Hankel matrix  associated with Problem $\partial CF\Pick(\R)$.
If $H_m(a)$ is SE-minimally positive and has rank $r < m$
then $H_r(a)$ is nonsingular.
\end{lemma}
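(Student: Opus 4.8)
The plan is to prove the statement by induction on $m$, using Julia reduction to pass from $H_m(a)$ to a strictly smaller Hankel matrix, exactly in the spirit of the Schur-complement Corollaries \ref{schur_compl} and \ref{congruent}. The first step is to record a convenient reformulation of SE-minimal positivity: for a positive semidefinite $m\times m$ matrix $H$, the matrix $H$ is SE-minimally positive if and only if $\ker H$ contains a vector $v$ with $v_m\neq 0$. The forward implication is immediate, since such a $v$ gives $v^*(H-\diag\{0,\dots,0,\ep\})v=-\ep|v_m|^2<0$ for every $\ep>0$. For the converse, if every kernel vector has vanishing last entry, then taking $\ep$ to be the least positive eigenvalue of $H$ and splitting an arbitrary $w$ into its components in $\ker H$ and $(\ker H)^\perp$ yields $w^*Hw\geq \ep|w_m|^2$, so $H-\diag\{0,\dots,0,\ep\}\geq 0$. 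This reformulation is what makes the last-entry bookkeeping tractable under reduction. The $(1,1)$ entry $a^1$ of $H_m(a)$ satisfies $a^1\geq 0$, which splits the argument into two cases.

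The degenerate branch is $a^1=0$. Then positivity forces the first row and column of $H_m(a)$ to vanish, and the Hankel structure propagates this: by a short induction one shows that rows $1,\dots,m-1$ all vanish, so $H_m(a)=\diag\{0,\dots,0,a^{2m-1}\}$. SE-minimal positivity then rules out $a^{2m-1}>0$, since in that case the only kernel vectors would have vanishing last entry; hence $H_m(a)=0$, so $r=0$ and $H_r(a)$ is the empty matrix, vacuously nonsingular.

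The main case is $a^1>0$, where $r\geq 1$. Here I would let $g$ be the formal reduction at $0$ of $f=\sum a^j z^j$; its coefficients are real and independent of $a^0$. By Corollary \ref{congruent} and identity (\ref{nbynhankel_schur}), $\schur H_m(a)=T\,H_{m-1}(g)\,T^T$ with $T$ a real invertible lower-triangular Toeplitz matrix with diagonal $a^1$, so $H_{m-1}(g)\geq 0$ and, by (\ref{schid}), $\rank H_{m-1}(g)=\rank\schur H_m(a)=r-1<m-1$. The crux is to verify that $H_{m-1}(g)$ is again SE-minimally positive, and for this I would track a kernel vector $v$ of $H_m(a)$ with $v_m\neq 0$ through two steps. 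First, the factorization (\ref{schid}) gives a bijection $\ker H_m(a)\to\ker\schur H_m(a)$ sending $u$ to $(u_2,\dots,u_m)$, which preserves the last coordinate $u_m$. Second, setting $v'=(v_2,\dots,v_m)$ and $w:=T^Tv'$, one has $w\in\ker H_{m-1}(g)$ with last entry $a^1v_m\neq 0$, because $T^T$ is upper-triangular with diagonal $a^1$. By the reformulation, $H_{m-1}(g)$ is SE-minimally positive, so the induction hypothesis gives that $H_{r-1}(g)$ is nonsingular. Finally the determinant identity (\ref{hank_det}), in the form $\det H_r(a)=(a^1)^{2r-1}\det H_{r-1}(g)$, transfers this back to $\det H_r(a)\neq 0$ (the case $r=1$ being immediate, since then $H_1(a)=[a^1]$).

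The main obstacle is precisely this last-entry bookkeeping in the $a^1>0$ case: I must check that nonvanishing of the last coordinate of the kernel vector survives both the passage to the Schur complement and the triangular congruence to $H_{m-1}(g)$. Everything else is routine once the reformulation of SE-minimal positivity is in place, and it is exactly the triangular (Toeplitz) shape of the transforming matrices that guarantees the last coordinate is merely rescaled by the nonzero factor $a^1$ rather than mixed with the earlier coordinates.
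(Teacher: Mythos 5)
Your proof is correct, and it takes a genuinely different route from the paper's. The paper argues function-theoretically: by Theorem \ref{sarasonHD_odd} the data admit a unique interpolating function $f\in\Pick$, real rational of degree exactly $r$ (via Kronecker's theorem); if $H_r(a)$ were singular of rank $r'<r$, then Theorem \ref{probprimeHD} applied to the truncated data $(a^0,\dots,a^{2r-1})$ would make the corresponding relaxed problem determinate, with its unique solution of degree at most $r'$ --- but $f$ also solves that relaxed problem and has degree $r>r'$, a contradiction. You instead stay entirely at the matrix level, and each step checks out: your characterization of SE-minimal positivity of a positive semidefinite $H$ as the existence of $v\in\ker H$ with $v_m\neq 0$ is correct in both directions (and is a reusable observation in its own right); the degenerate branch $a^1=0$ correctly collapses $H_m(a)$ to $\diag\{0,\dots,0,a^{2m-1}\}$ and then, by your reformulation, to $0$; and in the main branch both pieces of bookkeeping work --- the factorization (\ref{schid}) does biject $\ker H_m(a)$ onto $\ker \schur H_m(a)$ via $u\mapsto(u_2,\dots,u_m)$, since the first coordinate of a kernel vector is determined by the remaining ones, and the congruence $\schur H_m(a)=T\,H_{m-1}(g)\,T^T$ with $T^T$ upper triangular merely rescales the last kernel coordinate by $a^1\neq 0$. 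Thus $H_{m-1}(g)$ is SE-minimally positive of rank $r-1<m-1$, and the induction closes via $\det H_r(a)=(a^1)^{2r-1}\det H_{r-1}(g)$, with $r=1$ rightly handled separately since (\ref{hank_det}) is stated for $n\geq 1$. As for what each approach buys: the paper's proof is short given the machinery of Sections \ref{posit_definite}--\ref{atapoint}, but it leans on the solvability and determinacy theorems and on Kronecker's theorem; yours uses only the Schur-complement identities of Section \ref{hankelreduce}, so the lemma becomes a self-contained piece of linear algebra (for a real SE-minimally positive Hankel matrix of rank $r$, the rank is attained on the leading $r\times r$ principal section) that could be stated and proved immediately after Corollary \ref{congruent}, independently of any function theory.
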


\begin{proof} By Theorem \ref{sarasonHD_odd}, there exists a unique solution $f \in \Pick$ of Problem $\partial CF\Pick(\R)$:
\[
\frac{f^{(k)}(x)}{k!} = a^k, \qquad k=0,1,\dots,2m-1,
\]
and $f$ is real rational function of degree $r$.

Suppose $H_r(a)$ is singular of rank $r'<r$. Since $H_m(a)$ is SE-minimally positive, $H_r(a)$ is positive. By Theorem \ref{probprimeHD}, there is a unique solution $g \in \Pick$ of the relaxed problem 
\beq \label{relax_2r-1}
\frac{g^{(k)}(x)}{k!} = a^k, \qquad k=0,1,\dots,2r-2, \;\;\text{and } \;\;  \frac{g^{(2r-1)}(x)}{(2r-1)!} \le a^{2r-1}.
\eeq
Moreover, the solution $g$ is a real rational function of degree no greater than $r'$. We have a contradiction since $f$ is also a solution of Problem \ref{relax_2r-1} and $f$ is a real rational function of degree $r$, which is strictly greater than $r'$. Hence $H_r(a)$ is nonsingular.
 \end{proof}

\begin{proposition} \label{H_SEmin_posit_a_2m} Let $a=(a^0, \dots, a^{2m})\in\R^{2m+1}$.
Suppose that $H_m(a)$ is SE-minimally positive and $a^1 >0$. Then
Problem $\partial CF\Pick(\R)$ 
is solvable if and only if
\beq \label{form_a_2m}
a^{2m} = \left[ \begin{array}{cccc}
  a^m & a^{m+1} & \dots &a^{m+r-1}\end{array}\right] H_r(a)^{-1}
\left[\begin{array}{c} a^{m+1} \\ a^{m+2}\\ \cdot \\ a^{m+r} \end{array}\right] 
\eeq 
where $r=\rank H_m(a)$.
Moreover, the solution $f$ is unique and is a real rational function of degree equal to $r$.
\end{proposition}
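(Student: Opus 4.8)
The plan is to reduce Problem $\partial CF\Pick(\R)$ for the even datum to the already-solved odd problem, and then to pin down the single remaining Taylor coefficient by a determinantal (Schur-complement) computation. Since $H_m(a)$ is SE-minimally positive it is singular, so $r=\rank H_m(a)<m$, and by Lemma \ref{H_SEmin_posit_rank} the principal block $H_r(a)$ is nonsingular (note also that $a^1>0$ forces $r\geq 1$). Applying Theorem \ref{sarasonHD_odd} to the data $(a^0,\dots,a^{2m-1})$, whose associated Hankel matrix is precisely the SE-minimally positive matrix $H_m(a)$, I obtain a \emph{unique} $f\in\Pick$, analytic at $x$, with $f^{(k)}(x)/k!=a^k$ for $k=0,1,\dots,2m-1$, and this $f$ is a real rational function of degree $r$. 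Any solution of Problem $\partial CF\Pick(\R)$ for the full data $(a^0,\dots,a^{2m})$ in particular satisfies the first $2m$ of these conditions, hence by uniqueness must coincide with $f$. Consequently the even problem is solvable if and only if $f^{(2m)}(x)/(2m)!=a^{2m}$, and in that event its unique solution is $f$, real rational of degree $r$. Everything therefore hinges on computing the forced value $b^{2m}:=f^{(2m)}(x)/(2m)!$.

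To evaluate $b^{2m}$ I would set $b^k=f^{(k)}(x)/k!$, so that $b^k=a^k$ for $k\leq 2m-1$. Because $f$ is a real rational function of degree $r$, Kronecker's theorem \cite[Theorem 3.1]{peller}, used just as in the proof of Theorem \ref{sarasonHD_odd}(2), shows that the Hankel matrix $[b^{i+j-1}]_{i,j=1}^{m+1}$ has rank $r$; since $r<m$, every $(r+1)\times(r+1)$ submatrix of it is singular. I single out the submatrix $N$ supported on rows $\{1,\dots,r,m\}$ and columns $\{1,\dots,r,m+1\}$: reading off the Hankel entries $b^{\alpha+\beta-1}$ and using $b^k=a^k$ for all indices below $2m$ (the largest non-corner index is $m+r\leq 2m-1$), one finds
\[
N=\left[\begin{array}{cc} H_r(a) & v \\ u^T & b^{2m} \end{array}\right],
\qquad
u=\left[\begin{array}{c} a^m \\ \vdots \\ a^{m+r-1}\end{array}\right],\quad
v=\left[\begin{array}{c} a^{m+1} \\ \vdots \\ a^{m+r}\end{array}\right].
\]
As $\det N=0$ and $H_r(a)$ is nonsingular, the determinant identity (\ref{det_schur}) gives $0=\det N=\det H_r(a)\,\bigl(b^{2m}-u^T H_r(a)^{-1}v\bigr)$, whence $b^{2m}=u^T H_r(a)^{-1}v$, which is exactly the right-hand side of (\ref{form_a_2m}). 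I emphasize that this value does not involve $a^{2m}$.

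Combining the two steps completes the proof: the even problem is solvable precisely when $a^{2m}$ equals the value $b^{2m}$ just computed, that is, precisely when (\ref{form_a_2m}) holds; and when it is solvable the unique solution is $f$, a real rational function of degree $r$. The one genuinely computational step, and the main obstacle, is the evaluation of $b^{2m}$: the idea is to locate $a^{2m}$ as the south-east corner of a single $(r+1)\times(r+1)$ Hankel submatrix whose north-west $r\times r$ block is the invertible $H_r(a)$ and which straddles the rank boundary, so that its determinant must vanish. The choice of rows $\{1,\dots,r,m\}$ and columns $\{1,\dots,r,m+1\}$ is the key observation, since it places the corner entry at index $2m$ while keeping every other entry within the prescribed data; once this is set up, (\ref{form_a_2m}) drops out of the Schur-complement identity (\ref{det_schur}), and all remaining assertions are direct appeals to Theorem \ref{sarasonHD_odd} and Lemma \ref{H_SEmin_posit_rank}.
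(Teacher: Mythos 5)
Your proposal is correct and follows essentially the same route as the paper's own proof: reduce to the odd-index problem via Theorem \ref{sarasonHD_odd} to get the unique candidate $F$ of degree $r$, invoke Kronecker's theorem and Lemma \ref{H_SEmin_posit_rank}, and kill the determinant of exactly the same $(r+1)\times(r+1)$ Hankel submatrix (rows $\{1,\dots,r,m\}$, columns $\{1,\dots,r,m+1\}$) via the Schur-complement identity (\ref{det_schur}) to force the value of $F_{2m}$. The only cosmetic difference is that you make the uniqueness step ("any solution of the even problem must coincide with $F$") fully explicit, which the paper leaves implicit.
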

\begin{proof} 
By assumption, $H_m(a)$ is SE-minimally positive and $a^1 >0$. Thus,
 by Theorem \ref{sarasonHD_odd}, there exist a unique $F\in\Pick$  such that $F^{(k)}(x)/k! = a^k$ for $k=0,1,\dots,2m-1$ and $F$ is a real rational function of degree equal to $r=\rank H_m(a)$, $1 \le r \le m$.
 
By a result of Kronecker \cite[Theorem 3.1]{peller}, the infinite Hankel matrix
\[
H(F) = \left[ \begin{array}{cccccccc} 
F_1 & F_2 & \dots  & F_{r} & \dots  & F_{m} & F_{m+1} & \dots\\ 
\cdot & \cdot & \dots & \cdot & \dots & \cdot & \cdot & \dots \\
 F_r & F_{r+1} & \dots & F_{2r-1} & \dots  &   F_{m+r-1} & F_{m+r} & \dots\\ 
\cdot & \cdot & \dots & \cdot & \dots  & \cdot & \cdot & \dots \\
 F_{m} & F_{m+1} &\dots & F_{m+r-1} & \dots  & F_{2m-1} & F_{2m} & \dots\\ 
\cdot & \cdot & \dots & \cdot & \dots  & \cdot & \cdot & \dots \\
\end{array} \right].
\]
has rank $r$. Therefore any square $(r+1)\times (r+1)$ submatrix is singular. By Lemma \ref{H_SEmin_posit_rank}, $H_r(F) >0$. Hence, by equation (\ref{det_schur}), 
\begin{align*}
0 &= {\det}\left[ \begin{array}{cccccccc} 
F_1 & F_2 & \dots  & F_{r}  & F_{m+1} \\ 
\cdot & \cdot & \dots & \cdot & \cdot \\
 F_r & F_{r+1} & \dots & F_{2r-1} &  F_{m+r}\\ 
 F_{m} & F_{m+1} &\dots & F_{m+r-1}  & F_{2m}\\
\end{array} \right] 
\\
  &= \det H_r(F)\left( F_{2m} -
\left[ \begin{array}{cccc}  F_{m} & F_{m+1} &\dots & F_{m+r-1}\end{array} \right] H_r(F)^{-1} \left[\begin{array}{c} F_{m+1} \\ F_{m+2}\\ \cdot \\ F_{m+r} \end{array}\right] \right).
\end{align*}
Since
$F^{(k)}(x)/k! = a^k$ for $k=0,1,\dots,2m-1$, we have
\[
F_{2m} = \left[ \begin{array}{cccc}
  a^m & a^{m+1} & \dots &a^{m+r-1}\end{array}\right] H_r(a)^{-1}
\left[\begin{array}{c} a^{m+1} \\ a^{m+2}\\ \cdot \\ a^{m+r} \end{array}\right] .
\]
Thus Problem $\partial CF\Pick(\R)$ 
is solvable if and only if equation (\ref{form_a_2m}) holds.
\end{proof}

\begin{theorem} \label{sarasonHD_even} Let $n$ be an even positive integer. Then

{\rm (1)} Problem $\partial CF\Pick(\R)$ 
is solvable if and only if either the associated Hankel matrix $H_{m}(a)$, $n=2m$, is positive definite or  $H_{m}(a)$ is SE-minimally positive of rank $r \ge 1$ and $a^{n}$ satisfies {\rm (\ref{form_a_2m})}.  

{\rm (2)} The solution set of the problem, if non-empty, contains a real rational function of degree equal the rank of the Hankel matrix.

{\rm (3)} The problem is determinate if and only if the Hankel matrix is SE-minimally positive and $a^{n}$ satisfies {\rm (\ref{form_a_2m})}.
\end{theorem}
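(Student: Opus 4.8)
The plan is to prove Theorem~\ref{sarasonHD_even} by reducing the even case $n=2m$ to the odd case already settled in Theorem~\ref{sarasonHD_odd}, using Julia reduction to strip off the first two interpolation conditions and Proposition~\ref{H_SEmin_posit_a_2m} to handle the extra condition at $a^{2m}$. The overall strategy mirrors the inductive pattern used throughout: pass from $f$ to its reduction $g$ at $x$, use Corollary~\ref{congruent} to relate $H_{m+1}(a)$ to $\schur H_{m+1}(a)$ and hence to a smaller Hankel matrix for $g$, and conversely recover solutions of the original problem by augmentation.

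First I would dispose of the degenerate case $a^1=0$, exactly as at the start of the proof of Theorem~\ref{probprimeHD}: by Proposition~\ref{posderiv} any $f\in\Pick$ with $f'(x)=0$ is constant, so solvability forces $a^2=\dots=a^{2m}=0$, and one checks directly that this is equivalent to the stated Hankel conditions. Having reduced to $a^1>0$, I would treat the two alternatives in (1) separately. If $H_m(a)>0$ then Theorem~\ref{H_m(a)>0_HD} gives a real rational solution of degree $\rank H_m(a)$, settling solvability and the degree claim in (2) in that case. If instead $H_m(a)$ is SE-minimally positive, then solvability together with the formula (\ref{form_a_2m}) is precisely the content of Proposition~\ref{H_SEmin_posit_a_2m}, which also supplies uniqueness and the degree-$r$ rational solution; this simultaneously proves the nontrivial direction of (3).

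For necessity in (1) I would argue that a solution $f$ of Problem $\partial CF\Pick(\R)$ with data $(a^0,\dots,a^{2m})$ is \emph{a fortiori} a solution of the truncated problem with data $(a^0,\dots,a^{2m-1})$, so by Theorem~\ref{sarasonHD_odd} the matrix $H_m(a)$ is either positive definite or SE-minimally positive; in the latter case Proposition~\ref{H_SEmin_posit_a_2m} shows $a^{2m}$ must satisfy (\ref{form_a_2m}), since otherwise the problem would be unsolvable. The degree statement (2) follows from the degrees produced in each branch together with Kronecker's theorem \cite[Theorem 3.1]{peller}, exactly as in the proof of Theorem~\ref{sarasonHD_odd}(2). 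For (3), indeterminacy when $H_m(a)>0$ is inherited from Theorem~\ref{H_m(a)>0_HD}, and uniqueness in the SE-minimally positive case is the uniqueness clause of Proposition~\ref{H_SEmin_posit_a_2m}.

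I expect the main obstacle to be the bookkeeping around the SE-minimally positive case of rank $r<m$: one must verify that (\ref{form_a_2m}) is exactly the compatibility condition forced on $a^{2m}$ by the unique degree-$r$ rational function $F$ determined by the first $2m-1$ data, and that no solvability is lost or spuriously gained. This is where Lemma~\ref{H_SEmin_posit_rank} (guaranteeing $H_r(a)$ nonsingular, so that (\ref{form_a_2m}) makes sense) and the Kronecker rank argument in Proposition~\ref{H_SEmin_posit_a_2m} do the real work; the remaining steps are essentially formal reductions to the odd-$n$ theorem.
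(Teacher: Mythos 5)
Your proposal is correct and follows essentially the same route as the paper's own proof: necessity by observing that a solution also solves the truncated odd problem $n=2m-1$ and invoking Theorem~\ref{sarasonHD_odd}, then sufficiency and parts (2)--(3) by splitting into the positive definite case (Theorem~\ref{H_m(a)>0_HD}) and the SE-minimally positive case (Proposition~\ref{H_SEmin_posit_a_2m}), with the degenerate case $a^1=0$ treated separately. The opening mention of Julia reduction and Corollary~\ref{congruent} is superfluous here—that machinery is already encapsulated in the cited results and is not needed again at this level—but this does not affect the correctness of your argument.
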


\begin{proof} 
{\em Necessity} of (1).  Suppose that Problem $\partial CF\Pick(\R)$ has a solution $f\in\Pick$  such that $f^{(k)}(x)/k! = a^k$ for $k=0,1,\dots,2m$. This 
$f\in\Pick$ is also a solution of Problem $\partial CF\Pick(\R)$ for $n= 2m-1$.
The  Hankel matrix $H_m(a)$ for Problem $\partial CF\Pick(\R)$ with $n=2m$ and with $n= 2m-1$ is the same.
By Theorem \ref{sarasonHD_odd}, $H_m(a)$ is  positive definite or SE-minimally positive. 

In the case that $a^1=0$, the constant function $f(z)=a^0$ is the solution of $\partial CF\Pick(\R)$. Therefore, $a^2 =a^3= \dots = a^{2m}=0$. Thus $H_m(a)$ is  SE-minimally positive and $a^{n}$ satisfies (\ref{form_a_2m}).

If $H_m(a)$ is  SE-minimally positive and $a^1>0$ then by Proposition \ref{H_SEmin_posit_a_2m}, $a^{n}$ satisfies (\ref{form_a_2m}). 

{\em Sufficiency} of (1). In the case $H_m(a)>0$,
by Proposition \ref{H_m(a)>0_HD},  the problem $\partial CF\Pick(\R)$ is solvable. 

Consider the case that $a^1=0$. Then 
$H_m(a) \ge 0$ if and only if 
$a^2 = a^3 = \dots = a^{n-1} = 0$ and $a^{n} \ge 0$. $H_m(a)$ is SE-minimally positive, and so $a^{n}=0$.
 Hence the constant function $f(z)=a^0$ is the solution of $\partial CF\Pick(\R)$.

Suppose that $H_m(a)$ is SE-minimally positive, $a^1>0$ and $a^{n}$ satisfies (\ref{form_a_2m}). Then 
by Proposition \ref{H_SEmin_posit_a_2m}, there is an $f\in\Pick$ such that $f^{(k)}(x)/k! = a^k$ for $k=0,1,\dots,n$, that is, $f$ is a solution of Problem $\partial CF\Pick(\R)$.  We have proved (1). Moreover, $f$ is real rational of degree equal to $\rank H_m(a)$.

 (2) Suppose Problem $\partial CF\Pick(\R)$ solvable. By (1) either the associated Hankel matrix $H_{m}(a)$, $n=2m$, is positive definite or both $H_{m}(a)$ is SE-minimally positive of rank $r \ge 1$ and $a^{n}$ satisfies (\ref{form_a_2m}).  In the former case, by  Theorem \ref{H_m(a)>0_HD}, there is a real  rational solution of degree  $\rank H_m(a)$.  In the latter case, if $a^1=0$ then $H_m(a)=0$ and the conclusion follows easily,
whereas, when $a^1>0$,
if $H_m(a)$ is SE-minimally positive and $a^{n}$ satisfies (\ref{form_a_2m}), then by Proposition  \ref{H_SEmin_posit_a_2m},
 the solution $f$ of $\partial CF\Pick(\R)$ is a real  rational function of degree equal to $\rank H_m(a)$. Thus (2) holds for all $m$.

(3)  By Theorem \ref{H_m(a)>0_HD}, if $H_m(a) >0$ then Problem $\partial CF\Pick(\R)$ is indeterminate. By Proposition \ref{H_SEmin_posit_a_2m}, if $H_m(a)$ is SE-minimally positive and  $a^{n}$ satisfies (\ref{form_a_2m}) then the problem $\partial CF\Pick(\R)$ is determinate. 
\end{proof}

We now come to the proof of the main solvability result of the paper.  We make a couple of preliminary observations.

It is well known (for instance, 
\cite[Proposition 3.1]{AY10}) that, for each $f$ in the Pick class, if $f$ is meromorphic and has a pole at $x \in \R$ then $f$ has a {\em simple} pole at $x$, with negative residue. Therefore Problem $\partial CF\Pick$  is only solvable if $a^{-1} \in \R$ and $a^{-1} \le 0$.

Problem $\partial CF\Pick$  is trivial if $n=0$.  It is easy to see from consideration of Taylor expansions that a non-constant function in the Pick class has non-vanishing derivative at any point of $\R$ at which it takes a real value (for example, \cite[Proposition 3.1]{AY10}).   Accordingly we suppose in Problem $\partial CF\Pick$  that $n\geq 1$ and $\im a^0\geq 0$.

{\bf Proof of Theorem \ref{main_theorem}}.~
By Proposition \ref{a-1toa0},
we may reduce to the case that $a^{-1}=0$.

(1) If a function $f \in \Pick $ is analytic at $x$ then the restriction of $\im f$ to a suitable neighbourhood of $x$ in $\R$ is a smooth non-negative real function.  Suppose that $f\in \Pick$ is a solution of Problem $\partial CF\Pick$ and $\rho(a)$ is odd; then $a^0$ is real and so $\im f$ attains its minimum of $0$ at $x$.  The first non-zero derivative of the real function $\im f$ at $x$ is therefore an even derivative, and so the first non-real term of the sequence $a$ has even subscript.  Hence, if $\rho(a)$ is odd then Problem $\partial CF\Pick$  has no solution.

(2)  We consider the case that $\rho(a)$ is finite and even.  The proof will be by induction on $\rho(a)$.  If $\rho(a) = 0$, which is to say that $\im a^0 \neq 0$, then statement (2) holds by virtue of Theorem \ref{a0inPi}.
 Now let $m \geq 1$ and suppose that statement (2) of the theorem holds whenever $\rho(a)\leq 2m-2$.  Consider any sequence $a=(a^0, \dots,a^{2m}, \dots, a^n)$ such that $\rho(a)=2m$.  Suppose that $H_m(a) > 0$ and $\im a^{2m} >0.$  We have $a^1 > 0$.
Let $\sum_0^\infty g_j z^j$ be the reduction of $\sum_0^{n} a^j z^j$ at $0$. By Proposition  \ref{taylorcoeffs},  $g_0,g_1,\dots,g_{n-2} \in \C$ can be written in matrix terms thus:
\beq \label{defgbis}
  \mathrm {a^1} \left[\begin{array}{cccccc} \mathrm {a^1} & 0 & \cdot&\cdot &\cdot & 0 \\ \mathrm {a^2} & \mathrm {a^1} & \cdot&\cdot &\cdot &0\\
  \cdot&\cdot& \cdot& \cdot& \cdot& \cdot\\ \mathrm {a^{2m-1}} & \mathrm {a^{2m-2}} & \cdot & \mathrm {a^1} & \cdot &0 \\
  \cdot & \cdot &\cdot & \cdot &  \cdot & \cdot\\
 a^{n-1} & a^{n-2} & \cdot&\cdot & \cdot& a^1   \end{array}\right]
\left [ \begin{array}{c} g_0\\g_1\\ \cdot\\ g_{2m-3}\\ \cdot \\ g_{n-2}\end{array}\right] =
\left[ \begin{array}{c} \mathrm {a^2} \\ \mathrm {a^3} \\ \cdot \\ \mathrm {a^{2m-1}} \\ \cdot \\ a^n \end{array}\right].
\eeq
Here the entries printed in mathematical Roman font are known to be real.  It follows on truncation to the first $2m-2$ rows that $g_0, g_1, \dots, g_{2m-3}$ are real, and hence $\rho(g_0, g_1, \dots, g_{n-2}) \geq 2m-2$.  From the $(2m-1)$st row of equation (\ref{defgbis}) we have
\[
\mathrm {a^1(a^{2m-1}g_0 + \dots + a^2 g_{2m-3} + a^1}g_{2m-2}) = a^{2m},
\]
and hence, on taking imaginary parts, we find that
\beq \label{im2m}
\im g_{2m-2} = (\im a^{2m})/(a^1)^2  > 0,
\eeq
and so $\rho(g_0, \dots,g_{n-2}) = 2m-2$.
By Corollary \ref{congruent}, the Hankel matrix
$H_{m-1}(g_0,$ $\dots,g_{2m-1},\dots, g_{n-2}) $
is congruent to $\schur H_m(a)$.  Since  $H_m(a)>0$, it is also true that $H_{m-1}(g_0,\dots,g_{2m-1},\dots, g_{n-2}) > 0 $.

By the inductive hypothesis there exists $g \in \Pick$ such that 
\[
\frac{g^{(k)}(x)}{k!} = g_k, \qquad k=0,1,\dots, n-2.
\]
Hence
\[ 
g(z) - \sum_0^\infty g_j (z-x)^j = O((z-x)^{n-1}).
\]
Note that $\sum_0^{n} a^j (z-x)^j$ is the augmentation of $\sum_0^\infty g_j (z-x)^j $ at $x$ by $a^0,a^1$.
Let $f$ be the augmentation of $g$ at $x$ by $a^0,a^1$.
Thus $f\in \Pick$, $f$ is analytic at $x$ and, by Lemma \ref{2augment},
\[ f(z) - \sum_0^{n} a^j (z-x)^j = O(z^{n+1}).
\]
Therefore
$f^{(k)}(x)/k! = a^k$ for $k=0,1,\dots, n$.
Thus $f$ has all the desired properties, and we have proved sufficiency in statement (2) of Theorem  \ref{main_theorem}.

Conversely, suppose that $\rho(a)=2m$ and Problem $\partial CF\Pick$  has a solution $f\in\Pick$.  Let $g(z) = \sum_j g_j(z-x)^j$ be the reduction of $f$ at $x$.  Then by Theorem \ref{propfg} $g\in \Pick$ and $g$ is analytic at $x$.  Again from equation (\ref{defgbis}) we have $\rho(g_0, \dots,g_{n-2})=2m-2.$  By the inductive hypothesis $\im g_{2m-2} > 0$ and $H_{m-1}(g_0,\dots,g_{n-2}) > 0$; hence by Corollary \ref{congruent} (with $n=m-1$), $H_m(a) > 0$, while from equation (\ref{im2m}) we have $\im a^{2m} > 0$.  By induction, statement (2) of Theorem  \ref{main_theorem} holds.

Statements (3) and (4), concerning the case that all $a^j$ are real,
follows from Theorem \ref{sarasonHD_odd}  and Theorem \ref{sarasonHD_even}.

\section{Parametrization of solutions} \label{param}

In this section we give various descriptions of the solution set of  
Problem $\partial CF\Pick$ (when nonempty). The simplest case occurs when the associated Hankel matrix is  SE-minimally positive, for then, by Theorem \ref{sarasonHD_odd}, there is a unique solution.

\begin{theorem} \label{unique_sol_odd}  Let $a=(a^{-1},a^0,\dots,a^{n})\in \R^{n+2}$ where $a^{-1} \le 0$ and suppose that $H_{m}(a)$  is SE-minimally positive and of rank $r \ge 1$, where $m$ is the integer part of $\tfrac 12 (n+1)$.  If $n$ is even, suppose further that $a^{n}$ satisfies condition {\rm (\ref{form_a_2m})}.
The unique solution $F$ of Problem $\partial CF\Pick$ is
\beq \label{form_f}
F(z)=  \frac{a^{-1}}{z-x} + \frac{\sum_{j=0}^r \left(\sum_{k=0}^{j} c_k a^{j-k} \right) (z-x)^{j}}{\sum_{j=0}^r c_j  (z-x)^j}
\eeq 
where $c_0 = -1$ and
\beq \label{form_f_2}
\left[ \begin{array}{cccc}
 c_r  &  c_{r-1} & \dots & c_1\end{array}\right] =
\left[\begin{array}{cccc} a^{r+1} & a^{r+2} & \dots & a^{2r} \end{array}\right] H_r(a)^{-1}.
\eeq 
\end{theorem}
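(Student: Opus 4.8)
We want an explicit formula for the unique solution $F$ in the case that $H_m(a)$ is SE-minimally positive. The starting point is Proposition \ref{a-1toa0}, which lets me strip off the polar part $a^{-1}/(z-x)$ and reduce to finding the analytic solution $\tilde F$ with $\tilde F^{(k)}(x)/k!=a^k$ for $k=0,\dots,n$. By Theorem \ref{sarasonHD_odd} (and Theorem \ref{sarasonHD_even} in the even case, using condition (\ref{form_a_2m})), we already know this $\tilde F$ exists, is unique, and is a real rational function of degree exactly $r=\rank H_m(a)$. So the real content is to \emph{identify} this degree-$r$ rational function explicitly and verify that the candidate (\ref{form_f}), (\ref{form_f_2}) has the right Taylor coefficients.

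\textbf{Setting up the candidate.} Since $\tilde F$ is real rational of degree $r$, write $\tilde F(z) = p(z)/q(z)$ with $q$ of degree $r$ and $q(x)\neq 0$ (the denominator cannot vanish at $x$ because $\tilde F$ is analytic there). Normalizing, set $\la=z-x$ and $q(z)=\sum_{j=0}^r c_j \la^j$ with a convenient choice of scale; the paper takes $c_0=-1$. The interpolation conditions $\tilde F^{(k)}(x)/k!=a^k$ say precisely that, as formal power series in $\la$,
\beq
q(z)\,\tilde F(z) = \Big(\sum_{j=0}^r c_j\la^j\Big)\Big(\sum_{i=0}^\infty a^i \la^i\Big) = p(z),
\eeq
and $p$ must be a polynomial of degree at most $r$. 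The first $r+1$ coefficients of the left-hand product define the numerator: the coefficient of $\la^j$ is $\sum_{k=0}^j c_k a^{j-k}$, which is exactly the numerator written in (\ref{form_f}). The requirement that $p$ have degree $\le r$ forces the coefficients of $\la^{r+1},\dots$ in $q\cdot(\sum a^i\la^i)$ to vanish; writing out the vanishing of the coefficients of $\la^{r+1},\dots,\la^{2r}$ gives exactly $r$ linear equations $\sum_{k=0}^r c_k a^{r+\ell-k}=0$, $\ell=1,\dots,r$, whose matrix form (solving for $c_1,\dots,c_r$ in terms of $c_0=-1$) is the Hankel system (\ref{form_f_2}). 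Here I use that $H_r(a)$ is nonsingular, which is guaranteed by Lemma \ref{H_SEmin_posit_rank}.

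\textbf{Verifying the match to high enough order.} The linear equations only pin down the Taylor coefficients of $p/q$ through order $2r$; I must check that the candidate in fact matches $a^k$ for \emph{all} $k=0,\dots,n$. The clean argument is via the rank-$r$ Kronecker structure: the true solution $\tilde F$, being real rational of degree $r$, has an infinite Hankel matrix $H(\tilde F)$ of rank $r$ (Kronecker's theorem, \cite[Theorem 3.1]{peller}), and $H_r(\tilde F)=H_r(a)$ is invertible. Consequently the coefficients $(\tilde F_j)$ satisfy the same recurrence $\sum_{k=0}^r c_k \tilde F_{r+\ell-k}=0$ for all $\ell\ge 1$, with the \emph{same} $c_k$ determined by (\ref{form_f_2}) since $\tilde F_j=a^j$ for $j\le 2m-1 \ge 2r-1$. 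Thus $\tilde F$ itself has denominator $q$ and numerator $p$, i.e. $\tilde F=p/q$ is precisely the candidate; matching to order $2r$ plus the shared linear recurrence propagates the agreement to all higher orders, so in particular through order $n$. Finally, restoring the pole via $F(z)=a^{-1}/(z-x)+\tilde F(z)$ and invoking Proposition \ref{a-1toa0} gives (\ref{form_f}), and uniqueness is inherited from Theorem \ref{sarasonHD_odd}/\ref{sarasonHD_even}.

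\textbf{Main obstacle.} The delicate point is the last step: the linear system (\ref{form_f_2}) only uses data $a^0,\dots,a^{2r}$, yet the claim is that (\ref{form_f}) interpolates all of $a^0,\dots,a^n$ even when $n>2r$. One must argue that no extra conditions on $a^{2r+1},\dots,a^n$ are imposed beyond those already forced by SE-minimal positivity (and, for even $n$, by (\ref{form_a_2m})). The rank argument handles this cleanly provided I correctly invoke that the established unique solution $\tilde F$ has degree exactly $r$ and hence generates a rank-$r$ Kronecker recurrence; the danger is circularity, so I must keep the logical order straight — existence and degree come first from Theorems \ref{sarasonHD_odd} and \ref{sarasonHD_even}, and only then do I identify the closed form.
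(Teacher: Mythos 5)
Your proposal is correct and follows essentially the same route as the paper: both strip off the pole via Proposition \ref{a-1toa0}, invoke Theorems \ref{sarasonHD_odd}/\ref{sarasonHD_even} for existence, uniqueness and degree $r$, then use Kronecker's theorem together with the invertibility of $H_r(a)$ (Lemma \ref{H_SEmin_posit_rank}) and the Schur-complement determinant identity to obtain the linear recurrence $\sum_{k=0}^r c_k a^{r+\ell-k}=0$ for all $\ell\ge 1$, which is then resummed into the rational form (\ref{form_f}). The only difference is presentational — you set up a candidate $p/q$ and verify the true solution satisfies the same recurrence, while the paper derives the recurrence directly for the solution's Taylor coefficients and sums it — and your handling of the potential circularity (degree first, identification second) matches the paper's logical order.
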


\begin{proof} By Proposition \ref{a-1toa0}, in the case $a^{-1} \le 0$, 
 $$F(z) = \frac{a^{-1}}{z-x} + f(z)$$
where $f$ is the unique solution of Problem $\partial CF\Pick(\R)$; see equation (\ref{interpcond_anal}).

For ease of notation take $x=0$.  Consider first the case of odd $n$. Let the unique solution of Problem 
$\partial CF\Pick(\R)$ be $f(z)= \sum_{k=0}^\infty a^kz^k$.  By Theorem \ref{sarasonHD_odd} $f$ is real rational of degree $r$.  By Kronecker's Theorem \cite[Theorem 3.1]{peller}, the infinite Hankel matrix
\[
H=\left[ a^{i+j-1}\right]_{i,j=1}^\infty
\]
has rank $r$, and hence any $(r+1)$-square
 submatrix of $H$ is singular.  Thus, for $k \geq 1$, 
\begin{align*}
0 &= \det \left[ \begin{array}{cccccccc} 
a^1   & a^2 & \dots  & a^{r}  & a^{k} \\ 
\cdot & \cdot & \dots  & \cdot & \cdot \\
 a^r & a^{r+1} & \dots & a^{2r-1} &  a^{k+r-1}\\ 
 a^{r+1} & a^{r+2} &\dots & a^{2r}  & a^{k+r}
\end{array} \right] \\
  &= \det H_r(a)  \left( a^{k+r} -
\left[ \begin{array}{cccc}  a^{r+1} & a^{r+2} &\dots & a^{2r}\end{array} \right] H_r(a)^{-1} \left[\begin{array}{c} a^{k} \\ a^{k+1}\\ \cdot \\ a^{k+r-1} \end{array}\right] \right) \\
  &=(\det H_r(a))(a^{k+r} - c^r a^k - c_{r-1} a^{k+1} - \dots -c_1 a^{k+r-1}).
\end{align*}
By Lemma \ref{H_SEmin_posit_rank}, $\det H_r(a) \neq 0$, and thus
\beq \label{form_a_k+r}
a^{k+r} = c^r a^k + c_{r-1} a^{k+1} + \dots +c_1 a^{k+r-1}.
\eeq 
On multiplying this equation by $z^{k+r} $ 
and summing over $k=1,2, \dots,$ we have
\[
\sum_{k=r+1}^\infty a^k z^k = c_1z \sum_{k=r}^\infty  a^{k} z^{k} + c_2 z^2\sum_{k=r-1}^\infty  a^{k} z^{k} + \dots + c_r z^r\sum_{k=1}^\infty  a^{k} z^{k}.
\]
That is,
\[
f(z)-a^0-\dots - a^r z^r= c_1z (f(z) - a^0 -\dots - a^{r-1}z^{r-1}) + \dots +c_rz^r (f(z)-a^0).
\]
Since $c_0=-1$ this equation may be written
\[
-\left( \sum_{j=0}^r c_j z^j\right) f(z) = -\sum_{j=0}^r \left(\sum_{k=0}^j c_k a^{j-k}\right) z^j,
\]
and equation (\ref{form_f}) follows.

The case of even $n$ follows from Theorem \ref{unique_sol_odd} and Theorem \ref{sarasonHD_even}.
 \end{proof}

\begin{theorem} \label{parametrizeHD1p}
Let $a=(a^{-1},a^0,\dots,a^n)\in \R^{n+2}$ where $n\geq 1,\, a^{-1} \le 0$. Suppose  $H_m(a) > 0$ where $m$ is the integer part of $\tfrac 12 (n+1)$. 
There exist numbers
\[
s_1,\dots,s_m \in \R, \;\; \text{and, if} \; n \; \text{is even,} \; s_{m+1} \in \R, \;\; t_1,\dots,t_m > 0
\]
such that the general solution of Problem $\partial CF\Pick$  is $f(z)=\frac{a^{-1}}{z-x} + f_1(z)$ where the function $f_1$ is given by the following construction.
\begin{itemize}
\item[\rm(1)] Let $h$ be any function in $\Pick$ that is analytic at $x$;
\item[\rm (2)] if $n$ is odd let $f_{m+1} =h$,
while if $n$ is even let $f_{m+1}$ be the augmentation of $h$ at $x$ by $ s_{m+1}, t$ for some $t >0$;
\item[\rm (3)] $f_k$ is the augmentation of $f_{k+1}$ at $x$ by $s_k, t_k$ for $k=m,\dots,1$, that is,
$$f_k(z)= s_k + \frac{1}{\frac{1}{t_k(z-x)} -f_{k+1}(z)}.$$
\end{itemize}
The numbers $s_j, \ t_j$ are expressible in terms of quantities $a^i(j),\; 0\le i \le n - 2(j-1),$  which are given inductively, for $j=1,\dots,m$, by the equations 
\beq \label{defst}
a^i(1)= a^i,\qquad  0\le i \le n, \qquad 
\eeq
and 
\beq \label{def-a(i)}
   {a^1(j)} \left[\begin{array}{cccccc}  {a^1(j)} & 0 & \cdot&\cdot &\cdot & 0 \\  {a^2(j)} &  {a^1(j)} & \cdot&\cdot &\cdot &0\\
  \cdot & \cdot &\cdot & \cdot &  \cdot & \cdot\\
 a^{n-2j+1}(j) & a^{n-2j}(j) & \cdot&\cdot & \cdot& a^1(j)   \end{array}\right]
\left [ \begin{array}{c} a^0(j+1)\\a^1(j+1)\\ \cdot\\ \cdot \\ a^{n-2j}(j+1)\end{array}\right] =
\left[ \begin{array}{c}  {a^2(j)} \\  {a^3(j)} \\ \cdot \\ \cdot \\ a^{n -2(j-1)}(j) \end{array}\right].
\eeq
Then $s_j, t_j$  are defined by the
equations 
\beq \label{defsty}
s_j = a^0(j),\;\; \text{and, if} \; n \; \text{is even,} \; s_{m+1} = \frac{a^2(m)}{(a^1(m))^2},\qquad t_j = a^1(j).
\eeq
\end{theorem}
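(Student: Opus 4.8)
The plan is to exploit the fact, recorded in Note 3 following Theorem \ref{propfg}, that reduction and augmentation at a point of analyticity are mutually inverse operations: peeling successive interpolation conditions off a solution corresponds to iterated reduction, while the construction in the statement is iterated augmentation. By Proposition \ref{a-1toa0} I would first strip off the pole term $a^{-1}/(z-x)$ and reduce to parametrizing the solutions $f_1$ of the analytic problem $\partial CF\Pick(\R)$ with data $a^0,\dots,a^n$, adding the pole term back at the end. The crucial observation is that the recursion (\ref{def-a(i)}) defining the numbers $a^i(j)$ is exactly the Taylor-coefficient reduction formula (\ref{fandgbis}) of Proposition \ref{taylorcoeffs}; consequently, if $f_{j+1}$ denotes the reduction of $f_j$ at $x$, the Taylor coefficients of $f_j$ up to order $n-2(j-1)$ are forced to equal $a^i(j)$, and in particular $f_j(x)=a^0(j)=s_j$ and $f_j'(x)=a^1(j)=t_j$. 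Before running any induction I would record that positivity propagates: applying Corollary \ref{congruent} repeatedly to the congruence between the Hankel matrix of a reduction and the corresponding Schur complement shows the relevant Hankel matrix stays positive definite at every stage, whence each $a^1(j)>0$, so that every $t_j>0$ and all reductions and augmentations in sight are legitimate.

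For the forward direction (the construction yields solutions) I would argue by downward induction on $j$ that $f_j$ agrees with the polynomial $\sum_i a^i(j)(z-x)^i$ to order $n-2(j-1)$. Theorem \ref{propfg}(2) keeps each $f_j$ in the Pick class and analytic at $x$, while the order bookkeeping is supplied by Lemma \ref{2augment}, which upgrades agreement to order $N$ between two functions into agreement to order $N+2$ between their augmentations by the common data $s_j,t_j$. Since augmenting $\sum_i a^i(j+1)(z-x)^i$ by $s_j,t_j$ reproduces $\sum_i a^i(j)(z-x)^i$ (this being the inverse of the reduction recursion), the induction closes, and at $j=1$ it gives $f_1^{(k)}(x)/k!=a^k$ for $k=0,\dots,n$. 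The base case is vacuous for odd $n$, where $f_{m+1}=h$ is unconstrained, and records the single equation $f_{m+1}(x)=s_{m+1}=a^2(m)/(a^1(m))^2$ for even $n$, which is precisely what augmentation by $s_{m+1},t$ enforces.

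Conversely, given an arbitrary solution $f_1$, I would reduce $m$ times. Each step stays in $\Pick$ and analytic at $x$ by Theorem \ref{propfg}(1) together with the positivity just established, and by Proposition \ref{taylorcoeffs} the coefficients at stage $j$ are again $a^i(j)$, so $f_j$ is recovered as the augmentation of $f_{j+1}$ by the prescribed $s_j,t_j$. For odd $n$ all $2m$ conditions are consumed after $m$ reductions, leaving $f_{m+1}$ an unconstrained Pick function, which we take to be $h$; for even $n$ the single constraint $f_{m+1}(x)=s_{m+1}$ survives while $f_{m+1}'(x)=t$ is an arbitrary positive number by Proposition \ref{posderiv}(1), and setting $h$ equal to the reduction of $f_{m+1}$ exhibits $f_{m+1}$ as the required augmentation. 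A parameter count confirms the correspondence: the $2m$ data $a^0,\dots,a^{2m-1}$ fix $s_1,t_1,\dots,s_m,t_m$ exactly, and in the even case the extra datum $a^{2m}$ additionally fixes $s_{m+1}$, so the only remaining freedom is the Pick function $h$, together with $t>0$ when $n$ is even.

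I expect the main obstacle to lie in the even case. Although the counting is sharp, the degree-$m$ rational solution guaranteed by Theorem \ref{H_m(a)>0_HD} corresponds to the reduction tower terminating in the \emph{constant} function $s_{m+1}$, which cannot literally be written as an augmentation of any $h\in\Pick$, since no choice of $h,t$ produces a constant. This degenerate solution must be adjoined separately, or realized as the limit $t\to0^{+}$ of the stated augmentation; either way, some care is needed so that the parametrization is genuinely onto.
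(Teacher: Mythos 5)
Your proposal follows essentially the same route as the paper: the paper's entire proof of this theorem is the single sentence that it is ``a repetition of the proof by induction on $n$ of Theorem \ref{H_m(a)>0_HD}'', i.e.\ precisely the iterated reduction/augmentation argument you describe, with Proposition \ref{taylorcoeffs} identifying the recursion (\ref{def-a(i)}), Corollary \ref{congruent} propagating positive definiteness (hence $t_j=a^1(j)>0$), Lemma \ref{2augment} handling the orders of agreement, and Theorem \ref{propfg} keeping everything in $\Pick$. Your explicit two-way organization (construction yields solutions; every solution, reduced $m$ times, is recovered by the construction) is exactly what the paper leaves implicit.

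The obstacle you flag in the even case is genuine, and the paper's one-line proof does not address it. For $n=2m$ the constant function $f_{m+1}\equiv s_{m+1}$ does generate a solution: its augmentation by $s_m,t_m$ has Taylor coefficients $a^0(m)$, $a^1(m)$ and $s_{m+1}(a^1(m))^2=a^2(m)$ at $x$, and Lemma \ref{2augment} carries this up the tower to a solution of Problem $\partial CF\Pick$ --- in fact the degree-$m$ rational solution constructed in the proof of Theorem \ref{H_m(a)>0_HD}. Since the tower is uniquely recoverable from $f_1$ (each $f_{j+1}$ is forced to be the reduction of $f_j$), and since no augmentation $s_{m+1}+\left(\frac{1}{t(z-x)}-h(z)\right)^{-1}$ with $t>0$, $h\in\Pick$ can be constant, this solution is omitted by step (2) as stated: the construction parametrizes exactly those solutions whose $m$-th reduction is non-constant. (For odd $n$ there is no such defect, since there $f_{m+1}=h$ is allowed to be constant.) Note that the paper itself implicitly corrects this in Corollary \ref{linfrac}, where for even $n$ the parameter is any $h\in\Pick$ analytic at $x$ with $h(x)=s_{m+1}$, a description that includes the constant $h\equiv s_{m+1}$ and hence captures the missing solution. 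So your proposed repair --- adjoin the constant $f_{m+1}\equiv s_{m+1}$, or equivalently restate step (2) as ``$f_{m+1}$ is any function in $\Pick$ analytic at $x$ with $f_{m+1}(x)=s_{m+1}$'' --- is the right one, and with it your argument is complete.
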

\begin{proof}
The proof is a repetition of the proof by induction on $n$ of Theorem \ref{H_m(a)>0_HD}. 
 \end{proof}


\begin{corollary}\label{continued}
Under the hypotheses of Theorem \ref{parametrizeHD1p} the general solution of Problem $\partial CF\Pick$  is
\[
f(z) = \frac{a^{-1}}{z-x}+ s_1+  \frac{1}{\frac{1}{t_1(z-x)} -s_2- }\frac{1}{\frac{1}{t_2(z-x)}-s_3-}\dots \frac{1}{\frac{1}{t_m(z-x)} - f_{m+1}(z)}
\]
where, if $n$ is odd, $f_{m+1}$ is any function in $\Pick$ that is analytic at $x$,
while if $n$ is even, $f_{m+1}$ is the augmentation of $h$ at $x$ by $ s_{m+1}, t$ for some $t >0$, where $h$ is any function in $\Pick$ that is analytic at $x$.
\end{corollary}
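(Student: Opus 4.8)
The plan is to obtain the continued-fraction formula purely by unwinding the recursive construction of the general solution furnished by Theorem~\ref{parametrizeHD1p}. By that theorem the general solution has the form $f(z)=\frac{a^{-1}}{z-x}+f_1(z)$, where the functions $f_1,\dots,f_{m+1}$ are built from the bottom up: $f_{m+1}$ is $h$ when $n$ is odd, or the augmentation of $h$ at $x$ by $s_{m+1},t$ when $n$ is even, and for $k=m,m-1,\dots,1$ each $f_k$ is the augmentation of $f_{k+1}$ at $x$ by $s_k,t_k$, namely
\[
f_k(z)=s_k+\frac{1}{\frac{1}{t_k(z-x)}-f_{k+1}(z)}.
\]

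First I would substitute the expression for $f_2$ into the formula for $f_1$, which replaces the term $-f_2(z)$ in the denominator of $f_1$ by $-s_2-\left(\frac{1}{t_2(z-x)}-f_3(z)\right)^{-1}$. Iterating this substitution for $f_3,f_4,\dots,f_m$ in turn converts the nested expression for $f_1$ into the finite continued fraction
\[
f_1(z)=s_1+\frac{1}{\frac{1}{t_1(z-x)}-s_2-}\frac{1}{\frac{1}{t_2(z-x)}-s_3-}\cdots\frac{1}{\frac{1}{t_m(z-x)}-f_{m+1}(z)},
\]
which is exactly the asserted formula once the leading term $\frac{a^{-1}}{z-x}$ is restored. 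Formally this is a downward induction on the index $k$ (or simply a telescoping substitution), with the base case supplied by the two descriptions of $f_{m+1}$ carried over verbatim from Theorem~\ref{parametrizeHD1p} according to the parity of $n$.

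I do not expect any genuine obstacle here: the entire content of the corollary is already contained in Theorem~\ref{parametrizeHD1p}, and the only work is to rewrite the $m$-fold iterated augmentation in the standard continued-fraction notation. The single point requiring care is the bookkeeping of indices---ensuring that the augmentation parameters $s_k,t_k$ attach to the correct level of the fraction and that the innermost term is $f_{m+1}$ rather than $f_m$---so before asserting the general pattern I would verify the indexing by checking the cases $m=1$ and $m=2$ explicitly against the recursion.
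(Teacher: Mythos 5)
Your proposal is correct and is exactly the intended argument: the paper gives no separate proof of this corollary, since it follows immediately from Theorem \ref{parametrizeHD1p} by the telescoping substitution of each $f_{k+1}$ into the augmentation formula for $f_k$, precisely as you describe. Your attention to the indexing and the parity-dependent base case $f_{m+1}$ matches the paper's statement verbatim.
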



\begin{remark} {\rm In the SE-minimally positive  case $H_m(a)$  is
singular and so its rank $r$ is less than $m$. Then 
$t_1, t_2, \dots, t_r$ are positive and $t_{r+1}=\dots=t_{m}=0$, 
$f_{r+1}$ is the constant function $s_{r+1}$ and  the unique 
 solution $f$ of Problem $\partial CF\Pick$ is
\[
f(z) = \frac{a^{-1}}{z-x}+ s_1+  \frac{1}{\frac{1}{t_1(z-x)} -s_2- }\frac{1}{\frac{1}{t_2(z-x)}-s_3-}\dots \frac{1}{\frac{1}{t_{r}(z-x)} - s_{r+1}}.
\]
}
\end{remark}

Quantities closely related to the $t_j$ were familiar to the masters of old, who found expressions for them in terms of Hankel determinants: see for example \cite[Equation (26)]{Nev1922}, which 
Nevanlinna attributes to Stieltjes, Hamburger and Perron. In the present context the analogous formulae are as follows.

\begin{proposition}\label{NevParam} The  parameters
$t_j = a^1(j)$, $ j=1,\dots,m$, satisfy the equations
\beq \label{form_det_Hk}
\det  H_{k}(a) = t_k  \; t_{k-1}^{3}  \; t_{k-2}^{5}  \; \dots  \; t_1^{2k-1}
\eeq
for $k = 1, 2, \dots,m$, and consequently, for $l = 1,2, \dots,r$,
\beq \label{form_t_k_t_k+1}
 t_l \; t_{l+1} = \frac{D_{l-1} D_{l+1}} {\left( D_l \right)^2}
\eeq
where $r$ is the rank of  $H_{m}(a)$, $D_{-1}= D_0=1$ and 
\[ D_l = \det H_l(a), \; \; l = 1,2, \dots,r.\]
Furthermore, for $l = 1,2, \dots,r$,
\beq \label{form_t_l}
 t_l = \frac{D_l}{D_{l-1}^3} \left(\prod_{k=1}^{l-2} D^{(-1)^{k+l}}_{k} \right)^4.
\eeq
\end{proposition}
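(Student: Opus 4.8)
The plan is to recognise that the recursion defining the arrays $a^i(j)$ is nothing but iterated Julia reduction of formal power series, so that the determinant identity of Corollary~\ref{schur_compl} can be applied at each stage. Comparing the matrix equation~(\ref{def-a(i)}) with the Toeplitz relation~(\ref{fandgbis}) of Proposition~\ref{taylorcoeffs}, one sees that the sequence $a(j+1)=(a^0(j+1),a^1(j+1),\dots)$ is the reduction at $0$ of $a(j)=(a^0(j),a^1(j),\dots)$, whose first coefficient is $a^1(j)=t_j$. Writing $D_k^{(j)}=\det H_k(a(j))$ and $D_k=D_k^{(1)}=\det H_k(a)$, equation~(\ref{hank_det}) of Corollary~\ref{schur_compl} (with $f=a(j)$, $g=a(j+1)$ and $n=k-1$) then gives the one-step recursion
\begin{equation}\label{onestep}
D_k^{(j)} = t_j^{\,2k-1}\, D_{k-1}^{(j+1)} .
\end{equation}
This is legitimate as long as $t_j\neq 0$, which holds for $j\le r$ because $H_m(a)$, being positive definite or SE-minimally positive of rank $r$, has positive leading principal minors $D_1,\dots,D_r$.

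Next I would iterate~(\ref{onestep}). Starting from $D_k=D_k^{(1)}$ and applying~(\ref{onestep}) repeatedly, the superscript increases by one while the size drops by one, until one reaches the empty determinant $D_0^{(k+1)}=1$; the exponents picked up are $2k-1,2k-3,\dots,3,1$, giving
\begin{equation*}
D_k = t_1^{\,2k-1}t_2^{\,2k-3}\cdots t_{k-1}^{\,3}\,t_k = t_k\,t_{k-1}^{\,3}\cdots t_1^{\,2k-1},
\end{equation*}
which is~(\ref{form_det_Hk}), valid for $1\le k\le r$. For $r<k\le m$ both sides vanish: the left because $\rank H_m(a)=r<k$, the right because the factor $t_k=0$ occurs (as noted in the Remark preceding the Proposition); so~(\ref{form_det_Hk}) holds for all $k=1,\dots,m$.

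The two remaining identities are then purely algebraic consequences of~(\ref{form_det_Hk}), and I would derive them by tracking the exponent of each $t_j$. For~(\ref{form_t_k_t_k+1}) one substitutes~(\ref{form_det_Hk}) into $D_{l-1}D_{l+1}/D_l^2$ and checks that, for $j\le l-1$, the exponent of $t_j$ is $(2l-2j-1)+(2l-2j+3)-(4l-4j+2)=0$, while $t_l$ and $t_{l+1}$ each survive with exponent $1$; this uses $D_l\neq 0$, i.e. $l\le r$. For~(\ref{form_t_l}) I would introduce the partial products $P_k=\prod_{j=1}^k t_j$ and observe from~(\ref{form_det_Hk}) that $D_k/D_{k-1}=t_k P_{k-1}^2$, whence
\begin{equation*}
P_k P_{k-1} = \frac{D_k}{D_{k-1}}, \qquad P_0 = 1 .
\end{equation*}
Taking logarithms turns this into a first-order alternating recursion whose solution is $P_k=D_k\prod_{i=1}^{k-1}D_i^{\,2(-1)^{k-i}}$; substituting $P_{l-1}$ into $t_l=(D_l/D_{l-1})\,P_{l-1}^{-2}$ and simplifying the signs via $-(-1)^{l-1-i}=(-1)^{l+i}$ produces exactly~(\ref{form_t_l}).

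The only substantive step is the identification in the first paragraph, namely that~(\ref{def-a(i)}) encodes the reduction map so that Corollary~\ref{schur_compl} applies, together with the attendant nonvanishing $t_j\neq 0$ for $j\le r$; everything thereafter is exponent arithmetic. As a sanity check I would verify that the empty-product conventions $D_{-1}=D_0=1$ already make~(\ref{form_t_l}) correct at $l=1,2$, where it reads $t_1=D_1$ and $t_2=D_2/D_1^3$.
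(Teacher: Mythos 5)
Your proposal is correct and follows essentially the same route as the paper: identify the recursion (\ref{def-a(i)}) with iterated Julia reduction, apply the determinant identity (\ref{hank_det}) of Corollary \ref{schur_compl} to get the one-step relation, iterate it to obtain (\ref{form_det_Hk}), and then derive (\ref{form_t_k_t_k+1}) and (\ref{form_t_l}) by exponent bookkeeping (the paper does the last step by induction on $l$ via $t_{l+1}=D_{l-1}D_{l+1}/(D_l^2\,t_l)$, which is equivalent to your partial-product recursion $P_kP_{k-1}=D_k/D_{k-1}$). Your explicit handling of the degenerate range $r<k\le m$ and of the nonvanishing of $t_1,\dots,t_r$ is a point of care that the paper's proof leaves implicit.
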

\begin{proof} Apply Corollary \ref{schur_compl} with 
\[
f(z)=\sum_{k=0}^\infty a^k(j) (z-x)^k, \qquad 
g(z)=\sum_{k=0}^\infty a^k(j+1) (z-x)^k.
\]
We obtain
\beq \label{hank_det_a(j)}
\det H_{n+1}(a(j)) = (t_j)^{2n+1} \det H_{n}(a(j+1))
\eeq
for $n=1,2, \dots, m-j$. Writing $k=j+1$ and iterating this relation we find
\begin{eqnarray*}
\det H_{n}(a(k))  &=& \frac{1}{t_{k-1}^{2n+1} } \det H_{n+1}(a(k-1))
 = \dots \\
&=& \frac{1}{t_{k-1}^{2n+1} \;  t_{k-2}^{2n+3}  \; \dots   \; t_{1}^{2n+2k-3}} \det H_{n+k-1}(a(1)).\\
\end{eqnarray*}
Put $n=1$ to obtain
\[
t_k = \det H_{1}(a(k)) =  \frac{1}{t_{k-1}^{3}  \; t_{k-2}^{5}  \; \dots  \;  t_{1}^{2k-1}} \det H_{k}(a).
\]
which is equation (\ref{form_det_Hk}). 
It follows from  equation (\ref{form_det_Hk}) that
\begin{eqnarray*}
\frac{D_{k-1} D_{k+1}} {\left(D_k\right)^2} 
&=& \frac{(t_{k-1}\; t_{k-2}^{3} \; \dots  \; t_1^{2k-3}) \;\;(t_{k+1}\; t_{k}^{3} \;t_{k-1}^5 \; t_{k-2}^{7} \dots  \; t_1^{2k+1})}
{(t_k  \; t_{k-1}^{3}  \; t_{k-2}^{5}  \; \dots  \; t_1^{2k-1})^2} \\ 
&= &t_{k+1}\; t_k.\\
\end{eqnarray*}
Therefore,
\beq \label{form_t_D_ind}
t_{k+1} = \frac{D_{k-1} D_{k+1}} {\left(D_k\right)^2} \cdot \frac{1}{t_k}.
\eeq
We prove equation (\ref{form_t_l}) by induction on $l$.
Note that $t_1 =\frac{D_1}{D_{0}^3} $, so that (\ref{form_t_l}) holds when $l=1$. Suppose that
\beq \label{form_t_l_ind}
 t_l = \frac{D_l}{D_{l-1}^3} \left(\prod_{k=1}^{l-2} D^{(-1)^{k+l}}_{k} \right)^4. 
\eeq
Then, in view of equation (\ref{form_t_D_ind}),
\begin{eqnarray*} 
t_{l+1} &=& \frac{D_{l-1} D_{l+1}}{\left(D_l\right)^2} \frac{1}{t_l}=\frac{D_{l+1}}{D_{l}^3} \left(\prod_{k=1}^{l-1} D^{(-1)^{k+l+1}}_{k} \right)^4. \\
\end{eqnarray*}
 \end{proof}

We can give an alternative expression for the parametrization of solutions in Theorem \ref{parametrizeHD1p} in terms of a linear fractional transformation.  For any $2\times 2$ matrix $A=[a_{ij}]$ let us denote the corresponding linear fractional transformation by $L[A]$:
\[
L[A](w) = \frac{a_{11} w+a_{12}}{a_{21}w + a_{22}}, \qquad w\in\C\cup \{\infty\}.
\]
In this notation the relationship between $f_{k+1}$ and its augmentation $f_k$ at $x$ by $s_k, t_k$ can be written
\[
f_k(z) = L[A_k(z)] ( f_{k+1}(z))
\]
where, for $k=1,\dots, m,$
\beq \label{defA}
A_k(z) =  \left[\begin{array}{cc} s_k t_k(z-x)\; & \;\;-t_k(z-x)-s_k \\  t_k(z-x) & -1 \end{array}\right].
\eeq
Note that $\det A_k(z) = t^2_k(z-x)^2$.
The recursion for $f$ in Theorem \ref{parametrizeHD1p} becomes:
\begin{eqnarray*}
f(z)&=&\frac{a^{-1}}{z-x}+ f_1(z) = \frac{a^{-1}}{z-x}+L[A_1(z)](f_2(z)) = \frac{a^{-1}}{z-x}+L[A_1(z)A_2(z)](f_3(z)) \\
   &=& \dots = \frac{a^{-1}}{z-x}+L[A_1(z)\dots A_m(z)](f_{m+1}(z)).
\end{eqnarray*}
We therefore arrive at the following linear fractional parametrization.
\begin{corollary} \label{linfrac}
Let $a=(a^{-1},a^0,\dots,a^n)\in \R^{n+2}$ where $n\geq 1,\, a^{-1} \le 0$ and $a^1 \neq 0$. Suppose  $H_m(a) > 0$ where $m$ is the integer part of $\tfrac 12 (n+1)$. Then the general solution of Problem $\partial CF\Pick$  is
\[
f(z)= \frac{a^{-1}}{z-x}+ \ds \frac  {a(z) h(z) + b(z)}{c(z) h(z) + d(z)}
\]
where $a, b, c, d$ are real polynomials of degree at most $m$ satisfying, for some $K>0$,
\[
(ad-bc)(z) =  K (z-x)^{2m}
\]
and given by
\[
\left[\begin{array}{cc} a(z) & b(z) \\ c(z) & d(z) \end{array}\right] =
A_1(z) A_2(z) \dots A_{m}(z),
\]
where $A_k(z)$ is given by equations {\rm (\ref{defA})}, the quantities $s_k,t_k$  are as in Theorem {\rm \ref{parametrizeHD1p}} and 
if $n$ is even then $h$ is any function in $\Pick$ that is analytic at $x$ and  satisfies $h(x) = s_{m+1}$, and
if $n$ is odd then $h$ is any function in $\Pick$ that is analytic at $x$.
\end{corollary}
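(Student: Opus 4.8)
The plan is to obtain the statement directly from the linear-fractional recursion displayed immediately before the corollary, together with Theorem~\ref{parametrizeHD1p}. First I would set
\[
M(z) = A_1(z)A_2(z)\cdots A_m(z) = \left[\begin{array}{cc} a(z) & b(z) \\ c(z) & d(z) \end{array}\right],
\]
so that the chain of compositions derived immediately above reads $f(z) = \frac{a^{-1}}{z-x} + L[M(z)](f_{m+1}(z))$. Written out, $L[M(z)](w) = \frac{a(z)w+b(z)}{c(z)w+d(z)}$, so that identifying the free parameter $h$ with $f_{m+1}$ yields precisely the asserted formula. It then remains only to verify the two algebraic properties of $a,b,c,d$ and to translate the admissible range of $f_{m+1}$ into the stated conditions on $h$.

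For the algebraic properties I would argue as follows. Each factor $A_k(z)$ in (\ref{defA}) has entries that are real polynomials in $z$ of degree at most $1$; hence $M(z)$, being a product of $m$ such matrices, has real polynomial entries of degree at most $m$, as required. For the determinant I would invoke its multiplicativity together with the identity $\det A_k(z) = t_k^2(z-x)^2$ recorded after (\ref{defA}):
\[
(ad-bc)(z) = \det M(z) = \prod_{k=1}^m \det A_k(z) = \left(\prod_{k=1}^m t_k^2\right)(z-x)^{2m}.
\]
Putting $K = \prod_{k=1}^m t_k^2$, it then suffices to note that $K>0$. This holds because $t_1,\dots,t_m>0$ under the hypothesis $H_m(a)>0$, as asserted in Theorem~\ref{parametrizeHD1p} (and consistent with (\ref{form_det_Hk}), since the leading Hankel determinants $\det H_k(a)$ are then positive).

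Finally I would reconcile the description of the free parameter, which is the only genuinely non-bookkeeping step. For odd $n$, Theorem~\ref{parametrizeHD1p} already allows $f_{m+1}$ to be an arbitrary function in $\Pick$ that is analytic at $x$, exactly the stated range of $h$. For even $n$, Theorem~\ref{parametrizeHD1p} instead takes $f_{m+1}$ to be the augmentation at $x$ by $s_{m+1}, t$, for some $t>0$, of a function in $\Pick$ analytic at $x$; the hard part is to see that this augmentation parameter sweeps out exactly the functions $h\in\Pick$ that are analytic at $x$ with $h(x)=s_{m+1}$. One inclusion is Theorem~\ref{propfg}(2), which says every such augmentation lies in $\Pick$, is analytic at $x$, and takes the value $s_{m+1}$ there. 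For the reverse inclusion I would use that reduction and augmentation are inverse operations at a point of analyticity, together with Proposition~\ref{posderiv}(1), which guarantees $f'(x)>0$ for any $f\in\Pick$ analytic and real-valued at $x$: thus any such $h$ is the augmentation by $s_{m+1}, h'(x)$ of its own reduction at $x$. This identifies the admissible $f_{m+1}$ with the $h\in\Pick$ analytic at $x$ satisfying $h(x)=s_{m+1}$, completing the translation; every remaining assertion is the bookkeeping carried out above.
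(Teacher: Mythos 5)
Your proposal takes exactly the paper's route: in the paper, Corollary \ref{linfrac} is deduced from Theorem \ref{parametrizeHD1p} precisely by the computation displayed just before it --- writing each augmentation step as $f_k = L[A_k(z)](f_{k+1})$, composing to get $f = \frac{a^{-1}}{z-x} + L[A_1(z)\cdots A_m(z)](f_{m+1})$, and using $\det A_k(z) = t_k^2(z-x)^2$ --- and your degree, reality and determinant bookkeeping (with $K = \prod_k t_k^2 > 0$ because $t_1,\dots,t_m > 0$ when $H_m(a)>0$) is correct.

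The one step that fails as written is in your translation of the parameter set for even $n$. You cite Proposition \ref{posderiv}(1) as giving $f'(x)>0$ for \emph{any} $f\in\Pick$ analytic and real-valued at $x$, but that proposition requires $f$ to be non-constant. Consequently your claimed identification is off by exactly one function: the constant $h\equiv s_{m+1}$ belongs to the corollary's parameter set, yet it is not an augmentation --- every augmentation at $x$ by $s_{m+1},t$ has derivative $t>0$ at $x$ by Theorem \ref{propfg}(2), whereas the constant has derivative $0$ and, being constant, has no reduction at all. So your argument shows that every non-constant admissible $h$ yields a solution and that every solution arises from some admissible $h$, but it does not show that the formula evaluated at $h\equiv s_{m+1}$ --- the finite continued fraction, i.e.\ the rational solution of degree $m$ --- is itself a solution, which the corollary asserts. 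The repair is short: constant terminal functions are admissible in the recursion proving Theorem \ref{H_m(a)>0_HD} (see its $n=2$ base case, where $g\equiv a^2/(a^1)^2$ is explicitly allowed), so this $h$ gives a solution too. In fairness, the discrepancy is present in the paper itself, since Theorem \ref{parametrizeHD1p} (even $n$) parametrizes $f_{m+1}$ by augmentations while Corollary \ref{linfrac} allows all $h\in\Pick$ analytic at $x$ with $h(x)=s_{m+1}$; your write-up merely makes it visible --- but as stated your reverse inclusion is false and this one extra case needs separate treatment.
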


\section{Nevanlinna's analysis} \label{nevanlinna}

Although Nevanlinna's paper \cite{Nev1922} appears to be the first on the boundary Carath\'{e}odory-Fej\'{e}r problem, it is rarely cited in subsequent work on the topic. Nevertheless, as we show in this paper, the methods he introduced are of sufficient power to prove detailed results on solvability and parametrization for Problem $\partial CF\Pick(\R)$. We believe they have considerable merit in their simplicity and accessibility.

Nevanlinna's own formulation of the problem differs slightly from ours, in that he took the interpolation node to be $\infty$.
Given real  numbers 
$c_0, \dots, c_{2m-1}$ he sought a function $f$ such that $-f$ is   in the Pick class and 
$$ f(z) = c_0 +\frac{c_1 }{z} +  \frac{c_2}{z^2} +\dots + \frac{c_{2m-1} }{z^{2m-1}} + R_{2m}(z)$$
where the remainder $R_{2m}$ satisfies
$$ \lim z^{2m-1} R_{2m}(z) =0$$
as $z \to \infty$ in any sector $ \{z : \varepsilon < \arg z < \pi -\varepsilon \}$ for $\varepsilon \in \left( 0, \frac{\pi}{2} \right)$.
A function $f$ is a solution of Nevanlinna's problem if and only if the function $F(z)= - f( - \frac{1}{z}) $ solves a problem $\partial CF\Pick(\R)$ in the weak sense that the Taylor expansion $\sum_0^{2m-1} (-1)^{j+1} c_j z^j$ differs from $F(z)$ by a term which is $O(z^{2m-1})$ as $z \to 0$ in any nontangential approach region 
$ \{z : \varepsilon < \arg z < \pi -\varepsilon \}$ for $\varepsilon \in \left( 0, \frac{\pi}{2} \right)$.

Nevanlinna's primary interest was the determinacy of solutions of the Stieltjes moment problem, but {\em en route} he discussed solvability criteria for Problem $\partial CF\Pick(\R)$. His Satz I on page 11 describes the recursive procedure for constructing  solutions of the problem (the method we used in the proof of Theorem \ref{probprimeHD} and Part (2) of Theorem \ref{main_theorem}, with appropriate minor changes) and, states, roughly speaking, that the problem has a solution if and only if the recursion works. He does not formally state a solvability criterion in terms of the original data, but in a discussion on the following page he implies that, in our  terminology, Problem $\partial CF\Pick(\R)$ is 
solvable if and only if either the associated Hankel matrix $H_m(c) >0$ or, for some $k \in \{1, \dots, m-1\}$, $\det H_{j}(c)>0$ for $1 \le j \le k$ and $\det H_{k+1}(c)=\dots=\det H_{m}(c)=0$.  He did not give a full proof of this statement, and in fact it is inaccurate. Consider the case $m=3$,
\[
H_3(c) = \left[ \begin{array}{ccc} 1\; & 1\; & 1\; \\ 
   1 \;& 1 \; & 1\; \\ 
   1 \;& 1 \;& 2 \;\end{array} \right].
\]
Here $\det H_{1}(c) >0$ and $\det H_{2}(c) =\det H_{3}(c) =0$, but since $ H_{3}(c)$ is positive, singular and not SE-minimally positive, the corresponding problem  $\partial CF\Pick(\R)$ has no solution.

\section{Conclusion} \label{conclud}

We have presented an elementary and concrete solution of the classical  boundary Carath\'{e}odory-Fej\'{e}r interpolation problem. There are numerous alternative approaches in the literature, many addressing more general interpolation problems, but we believe that our main theorem, Theorem \ref{main_theorem}, is new. Specifically, (a) the notion of SE-minimal positivity of the associated Hankel matrix in the  solvability criterion is new, (b) we allow {\em complex} data 
$ a^0,\dots,  a^{n}$ in  Problem $\partial CF\Pick$ and (c) we allow both even and odd $n$. We comment briefly on some other approaches found in the 
literature.

A very valuable source of information about all manner of complex 
interpolation problems is the book of Ball, Gohberg and Rodman \cite{bgr}.
On pages 473-486 they study the ``generalized angular derivative interpolation (GADI) problem" associated with a data set consisting of a 4-tuple of matrices. They solve  the problem using their highly-developed realization theory of rational matrix functions. Their Corollary 21.4.2 relates to a problem which contains our Problem $\partial CF\Pick(\R)$ as a special case (modulo identification of the disc with a half-plane): one must take $N=1$, $k=0$, $C_{0-}=  \left[ \begin{array}{cccc}
  1\; & 0 \; & \dots &0 \;\end{array}\right]$, $C_{0+}=  \left[ \begin{array}{cccc}
  a^0 & a^{1} & \dots &a^{2m-1}\end{array}\right] $ and $A_0$ to be the $2m$-square Jordan block with eigenvalue $x$. The conclusion, after some work, is that $H_{m}(a) \ge 0$ is necessary and $H_{m}(a)>0$ is sufficient for solvability of $\partial CF\Pick(\R)$. They also give an explicit linear fractional parametrization of the solution set in the case that 
$H_{m}(a)>0$. Since they eschew ``singular" problems throughout the book (here, problems for which $H_{m}(a)$ is singular), they do not obtain a 
 solvability criterion.

The monograph \cite{BD} by Bolotnikov and Dym is entirely devoted to 
boundary  interpolation problems, though for the Schur class rather than the Pick class. They reformulate the problem within the framework of the Ukrainian school's Abstract Interpolation Problem
and solve it by means of operator theory in de Branges-Rovnyak spaces.
They too study a very much more general problem than we do here.
See also \cite[Theorem 4.3]{BolKh08}.

J. A. Ball and J. W. Helton have developed a far-reaching theory of interpolation which they call the ``Grassmanian" approach; it makes use of the geometry of Krein spaces.  It gives a unified treatment of many classical interpolation problems, including the ones studied in this paper -- see \cite[Theorem 6.3]{bh}.  Inevitably, one pays for the generality with a less concrete parametrization.

The treatment that is closest to ours is probably that is of Georgijevi\'c \cite{Geo98}. He allows finitely many interpolation nodes in $\R$ and finitely many derivatives  prescribed at each interpolation node.
He too uses  Nevanlinna-Julia recursion, which he calls the ``Schiffer-Bargmann Lemma"\footnote{This terminology is taken from E. Wigner 
\cite{wig}, who was evidently unaware of the work of Julia and Nevanlinna
30 years earlier. Wigner and J. von Neumann wrote a number of papers in the 1950s on a subclass of the Pick class that arises in the quantum theory of collisions.}, but his methods are less elementary: he uses de 
Branges-Rovnyak spaces and Nevanlinna's integral representation for functions in the Pick class. He  obtains a 
 solvability criterion for Problem $\partial CF\Pick(\R)$, in the case that $n$ is odd, which differs from ours.

We believe that a combination of the ideas in this paper with those in \cite{AY10} will produce a solution to the boundary  interpolation problem
with multiple nodes in $\R$ and with 
 finitely many derivatives  prescribed at each interpolation node, but we have not worked out the details.

JIM AGLER, Department of Mathematics, University of California at San Diego, CA \textup{92103}, USA\\

ZINAIDA A. LYKOVA,
School of Mathematics and Statistics, Newcastle University,
 NE\textup{1} \textup{7}RU, U.K.~~
e-mail\textup{: \texttt{Z.A.Lykova@newcastle.ac.uk}}\\

N. J. YOUNG, School of Mathematics, Leeds University, LS2 9JT, U.K.~~
e-mail\textup{: \texttt{N.J.Young@leeds.ac.uk}}


\begin{thebibliography}{99}

\bibitem{AY10}  {\sc  
 J. Agler and N. J. Young}. Boundary Nevanlinna-Pick interpolation via reduction and augmentation.  \emph{Math. Zeitschrift.} Online DOI: 10.1007/s00209-010-0696-3 (2010).


\bibitem{bgr} {\sc  J. A. Ball, I. Gohberg and L. Rodman}. \emph{Interpolation of Rational Matrix Functions}. Operator Theory: Advances and Applications \textbf{45} (Birkh\"auser Verlag, Basel, 1990).

\bibitem{bh} {\sc  J. A. Ball and J. W. Helton}.  Interpolation problems of Pick-Nevanlinna and Loewner types for meromorphic matrix functions: parametrization of the set of all solutions. \emph{Integral Equ. Oper. Theory} \textbf{ 9} (1986) 155--203.

\bibitem{Bol09} {\sc  V. Bolotnikov}.
On boundary angular derivatives of an analytic self-map of the unit disk. \emph{ C. R. Acad. Sci. Paris}, Ser. I, \textbf{347} (2009) 227--230.

\bibitem{Bol10} {\sc  V. Bolotnikov}.  The boundary analog of the   Carath\'eodory-Schur interpolation problem. arXiv:1008.3364v1 [math.CA] (2010).

\bibitem{BD} {\sc  V. Bolotnikov and H. Dym}.  On boundary interpolation for matrix valued Schur functions. \emph{AMS Memoirs} \textbf{181} (2006) Number 856 1--107.


\bibitem{BolKh08}  {\sc  V. Bolotnikov and A. Kheifets}.
The higher-order Carath\'eodory-Julia theorem and related boundary interpolation problems. \emph{Recent advances in matrix and operator theory}. Operator Theory: Advances and Applications Vol. \textbf{179}  63--102  (Birkh\"auser, Basel, 2008).

\bibitem{CF} {\sc  C. Carath\'eodory and L. Fej\'{e}r}. \"Uber den Zusammenhang der Extremen von harmonischen Funktionen mit ihren Koeffizienten und \"uber den Picard-Landau'schen Satz. \emph{Rend. Circ. Mat. Palermo} \textbf{32} (1911) 218--239. 

\bibitem{car54}
{\sc  C.~Carath\'eodory},
 \emph{Theory of functions, Vol. II}.
(Chelsea Publishing Company, New York, 1954).


\bibitem{Geo98}
 {\sc  D. R. Georgijevi\'c}. Solvability condition for a boundary value interpolation problem of Loewner type. \emph{J. Analyse Math\'ematique} \textbf{74} (1998) 213--234.

\bibitem{Geo05} {\sc  D. R. Georgijevi\'c}. Mixed L$\ddot{\rm o}$wner and Nevanlinna-Pick Interpolation.  \emph{Integral Equations and Operator Theory}  \textbf{53} (2005) 247--267. 

\bibitem{Ju20}  {\sc  G. Julia}.  Extension nouvelle d'un lemme de Schwarz. \emph{Acta Math.} \textbf{ 42}(1920) 349--355.

\bibitem{ko89}
 {\sc  I. V. Kovalishina}. A multiple boundary value interpolation problem for contracting matrix functions in the unit disk. \emph{Teor. Funktsii Funktsional. Anal. Prilozhen. } \textbf{51}  (1989) 38--55 (English trans.: \emph{J. Soviet Math.} \textbf{52} (1990) 3467--3481).

\bibitem{LimAnd} {\sc  D. J. N. Limebeer and B. D. O. Anderson}.
An interpolation theory approach to $H^\infty$ controller degree bounds.
\emph{Linear Algebra and its Applications} \textbf{98} (1988) 347--386.

\bibitem{Mar} {\sc  D. E. Marshall}.  An elementary proof of the Pick-Nevanlinna interpolation theorem.
\emph{Michigan Math. J.} \textbf{21} (1975) 219--223.

\bibitem{Nev1}  {\sc  R. Nevanlinna}. Kriterien f\"ur die Randwerte beschr\"{a}nkter Funktionen. \emph{Math. Zeitschrift} \textbf{13} (1922) 1--9.


\bibitem{Nev1922}   {\sc  R. Nevanlinna}. Asymptotische Entwicklungen Beschr$\ddot{\rm a}$nkter Funktionen und das Stieltjessche Momentenproblem.  \emph{Ann. Acad. Sci. Fenn. Ser. A}  \textbf{18}(5) (1922) 53 pp.

\bibitem{peller} {\sc  V. V. Peller}. \emph{Hankel Operators and their Applications} (Springer Verlag, New York 2003).

\bibitem{power}
 {\sc  S. C. Power}. An elementary approach to the matricial Nevanlinna-Pick interpolation criterion. \emph{Proceedings of the Edinburgh Mathematical
Society. Ser. 2} \textbf{32} (1989) 121--126.


\bibitem{S}   {\sc  D. Sarason}. Nevanlinna-Pick interpolation with boundary data. \emph{Integr. Equ. Oper. Theory} \textbf{30} (1998) 231--250

\bibitem{Schur} {\sc  I. Schur}. \"Uber Potenzreihen die im Innern des Einheitskreises beschr\"ankt sind. \emph{J. Reine Angew. Math.} \textbf{147} (1917) 205-232; \textbf{148} (1918-19) 122-145. 

\bibitem{wig} {\sc  E. Wigner}.  On a class of analytic functions from the quantum theory of collisions. \emph{Ann. Maths} \textbf{53} (1951) 36--59.
\end{thebibliography}
\end{document}